\documentclass[fleqn]{article}
\usepackage[utf8]{inputenc}
\usepackage{enumerate,amsthm,amssymb,xcolor,hyperref,comment}
\usepackage[margin=1in]{geometry}

\usepackage[leqno]{amsmath}

\makeatletter
\newcommand{\leqnomode}{\tagsleft@true}
\newcommand{\reqnomode}{\tagsleft@false}
\makeatother

\newtheorem{lemma}{Lemma}
\newtheorem{theorem}{Theorem}
\newcommand{\sset}[1]{\left\{#1\right\}}

\newcommand{\esspc}{(G, S, X_0, X, Y^*, f)}

\def\longbox#1{\parbox{0.85\textwidth}{#1}}

\title{Four-coloring $P_6$-free graphs.\\ I.  Extending an excellent precoloring}
\author{
Maria Chudnovsky\thanks{Supported by NSF grant DMS-1550991 and US Army Research Office grant W911NF-16-1-0404.}\\
Princeton University, Princeton, NJ 08544
\\
\\
Sophie Spirkl\\
Princeton University, Princeton, NJ 08544
\\
\\
Mingxian Zhong\\
Columbia University, New York, NY 10027}

\date{\today}
\begin{document}
\maketitle

\begin{abstract} 
  This is the first paper in a series whose goal is to give
  a polynomial time algorithm for the \textsc{$4$-coloring problem}
  and the \textsc{$4$-precoloring extension} problem restricted to the
  class of graphs with no induced six-vertex path, thus proving a conjecture 
of Huang. 
Combined with previously known 
results this completes the classification of the complexity of the $4$-coloring 
problem for graphs with a connected forbidden induced subgraph.

In this paper we
  give a polynomial time algorithm that determines if a special kind
  of precoloring of a $P_6$-free graph has a precoloring extension,
  and constructs such an extension if one exists. Combined with the
  main result of the second paper of the series, this gives a complete
  solution to the problem.
\end{abstract}

\section{Introduction} \label{sec:intro}

All graphs in this paper are finite and simple.
We use $[k]$ to denote the set $\sset{1, \dots, k}$. Let $G$ be a graph. A
\emph{$k$-coloring} of $G$ is a function
$f:V(G) \rightarrow [k]$. A $k$-coloring is {\em proper} if  for every edge 
$uv \in E(G)$,
$f(u) \neq f(v)$, and $G$ is \emph{$k$-colorable} if $G$ has a
proper $k$-coloring. The \textsc{$k$-coloring problem} is the problem of
deciding, given a graph $G$, if $G$ is $k$-colorable. This
problem is well-known to be $NP$-hard for all $k \geq 3$.

A function $L: V(G) \rightarrow 2^{[k]}$ that assigns a subset of
$[k]$ to each vertex of a graph $G$ is a \emph{$k$-list assignment}
for $G$. For a $k$-list assignment $L$, a function
$f: V(G) \rightarrow [k]$ is an \emph{$L$-coloring} if $f$ is a
$k$-coloring of $G$ and $f(v) \in L(v)$ for all $v \in V(G)$. A graph
$G$ is \emph{$L$-colorable} if $G$ has a proper  $L$-coloring. We denote by
$X^0(L)$ the set of all vertices $v$ of $G$ with $|L(v)|=1$.  The
\textsc{$k$-list coloring problem} is the problem of deciding, given a
graph $G$ and a $k$-list assignment $L$, if $G$ is 
$L$-colorable. Since this generalizes the $k$-coloring problem, it is
also $NP$-hard for all $k \geq 3$.

A \emph{$k$-precoloring} $(G, X, f)$ of a graph $G$ is a function
$f: X \rightarrow [k]$ for a set $X \subseteq V(G)$ such that $f$ is a
proper $k$-coloring of $G|X$. Equivalently, a $k$-precoloring is a
$k$-list assignment $L$ in which $|L(v)| \in \sset{1, k}$ for all
$v \in V(G)$. A \emph{$k$-precoloring extension} for $(G, X, f)$ is a
proper $k$-coloring $g$ of $G$ such that $g|_X = f|_X$, and the
\textsc{$k$-precoloring extension problem} is the problem of deciding,
given a graph $G$ and a $k$-precoloring $(G, X, f)$, if $(G, X, f)$
has a $k$-precoloring extension.

We denote by $P_t$ the path with $t$ vertices.  
Given a path $P$, its \emph{interior} is the set of vertices that
have degree two in $P$. We denote the interior of $P$ by $P^*$.
A  \emph{$P_t$ in a graph  $G$} is  a sequence $v_1-\ldots -v_t$ of pairwise 
distinct vertices where for $i,j \in [t]$, $v_i$ is adjacent to $v_j$ if and 
only if $|i-j|=1$.  We denote by $V(P)$ the set $\{v_1, \ldots, v_t\}$,
and if $a, b \in V(P)$, say $a=v_i$ and $b=v_j$ and $i<j$, then $a-P-b$ is the 
path $v_i-v_{i+1}-\ldots - v_j$.
A graph is \emph{$P_t$-free} if there is no $P_t$ in $G$.
Throughout the paper by ``polynomial time'' or ``polynomial size'' we mean 
running time,  or size, that is polynomial in $|V(G)|$.

Since the  \textsc{$k$-coloring problem} and the \textsc{$k$-precoloring extension problem} are $NP$-hard for $k \geq 3$, their
restrictions to graphs with a forbidden induced subgraph have been extensively 
studied; see  \cite{ICM,gps} for a survey of known results. In particular,
the following is known (given a graph $H$, we say that a graph $G$ is 
{\em $H$-free} if no  induced subgraph of $G$ is isomorphic to $H$):

\begin{theorem}[\cite{gps}] Let $H$ be a (fixed) graph, and let $k>2$. If
the \textsc{$k$-coloring problem} can be solved in polynomial time when restricted to the class of $H$-free graphs, then every connected component of $H$ is a path.
\end{theorem}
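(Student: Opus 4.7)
The plan is to prove the contrapositive: assuming some component of $H$ is not a path, I would show that the $k$-coloring problem on $H$-free graphs is $NP$-hard for every $k \geq 3$. A connected graph that is not a path either contains a cycle or is a tree with a vertex of degree at least three, so I split the argument into two cases depending on whether any component of $H$ contains a cycle, and in each case exhibit a subclass of the $H$-free graphs on which $k$-coloring is already known to be hard.

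Suppose first that some component of $H$ contains a cycle, and set $n = |V(H)|$. Then the girth of $H$ is at most $n$, and therefore any graph of girth greater than $n$ contains no subgraph, and in particular no induced subgraph, isomorphic to $H$; so graphs of girth greater than $n$ are $H$-free. I would then invoke the classical theorem of Emden-Weinert, Hougardy, and Kreuter, which states that for every $k \geq 3$ and every $g \geq 3$, the $k$-coloring problem is $NP$-hard on graphs of girth at least $g$. Applying it with $g = n+1$ settles this case.

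Now suppose $H$ is a forest and some component has a vertex $v$ of degree at least three. Since $H$ is triangle-free, the neighbors of $v$ are pairwise nonadjacent, so $v$ together with any three of them induces a claw $K_{1,3}$ in $H$. Consequently every graph containing $H$ as an induced subgraph also contains an induced claw, so every claw-free graph is $H$-free. The class of claw-free graphs contains all line graphs, and by Holyer's theorem (for $k = 3$) together with its extension by Leven and Galil (for $k \geq 4$), the $k$-edge-coloring problem is $NP$-hard on $k$-regular graphs. Via the line graph correspondence this yields $NP$-hardness of $k$-coloring on line graphs, hence on claw-free graphs, and hence on $H$-free graphs.

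The two structural containments into the $H$-free class are immediate once the correct hard subclass is identified, so the main difficulty is essentially citational: importing the right off-the-shelf hardness results. The less elementary ingredient is the Emden-Weinert--Hougardy--Kreuter theorem that $k$-coloring stays $NP$-hard under an arbitrary fixed girth lower bound; the claw-free case is a straightforward consequence of the $NP$-hardness of edge coloring regular graphs.
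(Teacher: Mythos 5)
The paper does not prove this theorem; it simply cites it from Golovach, Paulusma, and Song \cite{gps}, so there is no in-paper argument to compare against. Your proof is the standard one (it goes back at least to Kr\'al', Kratochv\'il, Tuza, and Woeginger's classification for connected $H$), and it is correct: the girth argument of Emden-Weinert, Hougardy, and Kreuter handles the case where some component of $H$ contains a cycle, and the reduction from $k$-edge-coloring of $k$-regular graphs (Holyer for $k=3$, Leven--Galil for $k\geq 4$) via line graphs, which are claw-free, handles the case where $H$ is a forest with a vertex of degree at least three; these two cases are exhaustive for a non-path component. One point worth making explicit is that it suffices for a single \emph{component} of $H$ to have the stated property: if that component contains a short cycle then so does $H$ and high-girth graphs are $H$-free, and if that component contains an induced claw then so does $H$ and claw-free graphs are $H$-free. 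You implicitly use this but the phrasing ``girth of $H$'' and ``claw in $H$'' keeps it clean. This is almost certainly the same proof that appears in \cite{gps}.
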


Thus if we assume that $H$ is connected, then the question of determining the 
complexity of $k$-coloring $H$-free graph is reduced to 
studying the complexity of coloring graphs with 
certain induced paths excluded,
and a significant body of work has been produced on this topic.
Below we list a few such results.

\begin{theorem}[\cite{c1}] \label{3colP7}
The \textsc{3-coloring problem} can be
  solved in polynomial time for the class of $P_7$-free graphs.
\end{theorem}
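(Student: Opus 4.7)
The plan is to reduce the \textsc{3-coloring problem} on a $P_7$-free graph $G$ to polynomially many instances of 2-list coloring, which is solvable in polynomial time via reduction to 2-SAT. The reduction follows the template that has become standard for smaller $P_t$: enumerate colorings of some ``dominating'' structure $D$ of bounded descriptive complexity, and for each such coloring, propagate color constraints so that every vertex outside $D$ ends up with a list of size at most $2$.

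Concretely, I would proceed as follows. First, establish a structural lemma: every connected $P_7$-free graph contains a dominating induced subgraph $D$ whose set of proper 3-colorings has polynomial size. A plausible candidate is a dominating induced $P_6$ (or a short union of dominating cliques), which one could try to extract by taking a longest induced path $P$ in $G$ and showing that $V(P) \cup N(V(P))$ dominates $V(G)$; otherwise one could extend $P$, contradicting either $P_7$-freeness or maximality. Second, for each 3-coloring $f_D$ of $D$, define a list assignment $L(v) = \{1,2,3\} \setminus f_D(N(v) \cap D)$ for every $v \in V(G) \setminus D$. Domination of $D$ ensures $|L(v)| \leq 2$ for every such $v$. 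Third, solve each resulting 2-list coloring instance by encoding it as a 2-SAT instance: one boolean variable per vertex (selecting between its two permitted colors), with a clause for each edge forbidding both endpoints from taking the same color. Accept $G$ as 3-colorable iff some branch yields a satisfiable 2-SAT instance.

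The main obstacle is the structural step: showing that $P_7$-free graphs admit a dominating structure whose 3-colorings propagate fully to lists of size at most $2$. This is not automatic --- for $P_t$-free graphs with $t \geq 8$ the analogous strategy is known to fail, and even at $t = 7$ some delicate case analysis is needed, for instance, for vertices whose only neighbors in $D$ happen to all receive the same color (leaving a list of size $2$, which is fine) versus vertices that one cannot dominate with $D$ alone (which must be handled by enlarging $D$ with auxiliary branching). Managing these cases while keeping the total number of branches polynomial is where the real work lies; once the structural result is in hand, the 2-SAT step is entirely routine. I would expect the final running time to be $|V(G)|^{O(1)}$, with the exponent governed by the size of $D$ and the branching factor introduced during the structural enumeration.
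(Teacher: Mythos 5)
The paper does not prove this statement; it is quoted as a known result from Bonomo, Chudnovsky, Maceli, Schaudt, Stein, and Zhong~\cite{c1} and used purely as a black box (for example, in the proof of Theorem~\ref{companion} to test whether a given list assignment restricted to three colors admits a proper coloring of a component of $G|Y^*$). So there is no ``paper's own proof'' to compare against, only the citation.

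As a blind attempt at the cited theorem, your sketch captures the standard back end of such arguments --- propagate colors from a small dominating structure to shrink all remaining lists to size at most $2$, then finish with $2$-SAT --- but the front end, which you yourself flag as ``where the real work lies,'' does not hold up. Connected $P_7$-free graphs are not known to possess a dominating induced $P_6$, and the closest general domination theorem (Camby--Schaudt) only guarantees a connected dominating set $D$ with $G|D$ being $P_5$-free or a $P_5$; a $P_5$-free dominating subgraph can have linear size and exponentially many $3$-colorings, so enumerating its colorings is not polynomial. Your fallback of taking a longest induced path $P$ and using $V(P)\cup N(V(P))$ has the same problem: that set can be essentially all of $V(G)$, again with exponentially many colorings. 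The actual proof in \cite{c1} is a long, multi-stage structural analysis (around seventy pages) built on branching on small ``seeds,'' repeatedly updating lists, and exploiting detailed structure of $P_7$-free graphs far beyond a single dominating-path lemma; the $2$-SAT step is only the final, routine piece of a much larger machine. So while your outline points in the right general direction, it omits the entire technical core and relies on a structural lemma that, in the form you propose it, is false.
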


\begin{theorem}[\cite{hoang}] The \textsc{$k$-coloring problem} can be
  solved in polynomial time for the class of $P_5$-free graphs.
\end{theorem}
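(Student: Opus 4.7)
The plan is to reduce the $k$-coloring problem on a $P_5$-free graph $G$ to repeated enumeration of maximal independent sets. The key observation is that a proper $k$-coloring of $G$ is a partition of $V(G)$ into $k$ independent sets, and in any such partition the first color class may without loss of generality be taken to be a maximal independent set. Hence $G$ is $k$-colorable if and only if there exists a maximal independent set $S$ in $G$ such that $G \setminus S$ is $(k-1)$-colorable. Since $P_5$-freeness is hereditary, this recursion stays inside the class, so the same subroutine can be reapplied.

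First I would establish the main enabling fact: every $P_5$-free graph has only polynomially many maximal independent sets, and the full list can be generated in polynomial time. This is the real combinatorial core; one proves it by choosing a vertex $v$ together with a suitable piece of its neighborhood structure, showing that in a $P_5$-free graph the neighborhoods can be organized so that each maximal independent set is determined by polynomially many choices, and then recursing on smaller $P_5$-free subgraphs. I would cite this enumeration result (due to Tsukiyama-style techniques adapted to the $P_5$-free class) rather than reprove it from scratch.

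With the enumeration in hand, I would describe the coloring algorithm recursively on $k$. Compute the list $\mathcal{S}$ of all maximal independent sets of $G$; for each $S \in \mathcal{S}$, recursively test whether $G \setminus S$ is $(k-1)$-colorable, and return \emph{yes} as soon as some branch succeeds. The base cases are $k = 1$ (test that $E(G) = \emptyset$) and $k = 2$ (test bipartiteness). Since the recursion depth equals $k$, which is a constant in the problem statement, and the branching factor at each level is polynomial in $|V(G)|$, the total running time is polynomial, with exponent depending on $k$.

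The main obstacle is the first step, namely the polynomial bound on the number of maximal independent sets together with an efficient listing procedure: without it the branching blows up exponentially, as in general graphs. A minor point one must verify is that the class is closed under removing an independent set, which is immediate because every induced subgraph of a $P_5$-free graph is $P_5$-free. Once these pieces are in place, the recursive reduction to $(k-1)$-coloring is routine.
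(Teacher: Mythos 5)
This theorem is not proved in the paper; it is quoted as a known result of Ho\`ang, Kami\'nski, Lozin, Sawada and Shu, so there is no ``paper's proof'' to compare against. That said, your proposal has a fatal gap.

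The enabling fact you rely on --- that every $P_5$-free graph has only polynomially many maximal independent sets, listable in polynomial time --- is false. The disjoint union of $n/2$ edges (a perfect matching) contains no induced $P_3$, hence is $P_5$-free, yet it has $2^{n/2}$ maximal independent sets, one for each choice of endpoint per edge. Similarly, $n/3$ disjoint triangles give $3^{n/3}$. Tsukiyama-type enumeration runs in time polynomial per set output, so it gives no polynomial bound when the number of sets is exponential. Since the whole recursion hinges on a polynomial branching factor at the top level, the algorithm as described is exponential even for $k=3$. The reduction ``$G$ is $k$-colorable iff $G\setminus S$ is $(k-1)$-colorable for some maximal independent set $S$'' is of course correct, but it is useless without the false enumeration bound.

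The actual argument in the cited reference goes a genuinely different route. It uses the Bacs\'o--Tuza theorem that every connected $P_5$-free graph has a dominating clique or a dominating induced $P_3$. A dominating clique in a $k$-colorable graph has at most $k$ vertices, so one can enumerate all ways to color the (constant-size) dominating set, propagate the resulting list restrictions to all remaining vertices (each now has a list of size at most $k-1$), and recurse on a list-coloring problem with strictly smaller lists, using further structural decompositions of $P_5$-free graphs. That domination-based recursion is what makes the running time polynomial; an approach built on enumerating all maximal independent sets cannot be repaired for this class.
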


\begin{theorem}[\cite{huang}] The \textsc{4-coloring problem} is
  $NP$-complete for the class of $P_7$-free graphs.
\end{theorem}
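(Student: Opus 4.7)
Membership in NP is immediate, since any proposed 4-coloring can be verified in polynomial time. For hardness, my plan is to reduce from an NP-complete variant of SAT, say $3$-SAT or NAE-$3$-SAT, by constructing, given a formula $\phi$, a $P_7$-free graph $G_\phi$ that is 4-colorable if and only if $\phi$ is satisfiable. I would begin with a small ``palette'' subgraph whose vertices are forced (up to relabeling of colors) to receive the four distinct colors $1,2,3,4$; this lets every subsequent gadget refer to a fixed name for each color.

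For each variable $x_i$ I would attach a variable gadget containing two literal vertices $\ell_i, \bar{\ell}_i$ whose $4$-colorings are restricted to $\{c(\ell_i), c(\bar{\ell}_i)\} = \{1,2\}$, so that $c(\ell_i)=1$ corresponds to $x_i = \mathrm{true}$. For each clause $C_j = (a \vee b \vee c)$ I would add a clause gadget: a small subgraph whose vertices have their color lists restricted, via adjacencies to the palette, so that a proper $4$-coloring of the gadget exists if and only if at least one of $c(a), c(b), c(c)$ equals $1$. A standard design uses a central clause vertex adjacent to all three literals and joined to palette vertices so that only the colors $1$ and $4$ remain available on it, together with auxiliary vertices that rule out the ``all literals false'' case.

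The main obstacle is guaranteeing that $G_\phi$ is $P_7$-free. A naive gadget construction easily produces an induced path that traverses a variable gadget, enters a clause gadget, exits through another literal, and then passes into a second clause, yielding an induced $P_7$ or longer. To prevent this I would introduce a ``hub'' of vertices made adjacent to essentially everything outside the hub, or else use the palette vertices themselves in this role, so that any would-be long induced path is forced to meet a hub vertex, which then forms a chord. The $P_7$-freeness verification then becomes a finite case analysis: for any candidate induced path on seven vertices, only a bounded number of vertices may come from each gadget, and any transition between gadgets must pass through a hub vertex, yielding a chord that contradicts the induced-path assumption. Combined with the routine equivalence between satisfying assignments of $\phi$ and proper $4$-colorings of $G_\phi$, this completes the reduction and proves NP-hardness.
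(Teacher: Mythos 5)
The paper does not prove this theorem; it cites it from Huang~\cite{huang}, so there is no in-paper argument to compare against. Evaluated on its own terms, your sketch has a genuine gap precisely where the theorem is hard, namely in guaranteeing $P_7$-freeness. The ``hub'' device you rely on cannot work in the form you describe. Suppose $H$ is a non-empty set of hub vertices complete to $V(G)\setminus H$ and $V(G)\setminus H\neq\emptyset$. Then in any proper $4$-coloring the color sets used on $H$ and on $V(G)\setminus H$ are disjoint, so $G$ is $4$-colorable if and only if there is a partition $[4]=A\cup B$ with $A,B$ both non-empty such that $G|H$ is $A$-colorable and $G\setminus H$ is $B$-colorable. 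Both $G|H$ and $G\setminus H$ are induced subgraphs of $G$ and hence $P_7$-free, and each of $|A|,|B|\le 3$, so by Theorem~\ref{3colP7} (together with the trivial cases $|A|,|B|\le 2$) each subproblem is polynomial-time solvable; trying all constantly many partitions gives a polynomial algorithm for $4$-coloring $G$. A truly universal hub therefore makes your instances easy, the opposite of what you need.

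Weakening the hub to be only ``essentially'' complete reopens exactly the difficulty the hub was introduced to close: the non-adjacencies are where long induced paths can thread through, and the ``finite case analysis'' you invoke cannot be carried out until the gadgets are fully specified, which they are not. The real content of Huang's theorem is a gadget design that dominates would-be long induced paths without simultaneously trivializing the coloring constraints, and that balance is the missing step in your proposal rather than a routine verification to be filled in afterward.
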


\begin{theorem}[\cite{huang}] For all $k \geq 5$, the
  \textsc{$k$-coloring problem} is $NP$-complete for the class of
  $P_6$-free graphs.
\end{theorem}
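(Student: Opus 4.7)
Membership in NP is immediate: a proper $k$-coloring is a polynomial-size certificate. The substance of the theorem is NP-hardness.

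My plan is to reduce the general case $k \geq 5$ to the base case $k = 5$. Given a $P_6$-free graph $G$ on which $5$-coloring is known to be hard, and given $k \geq 6$, form $G'$ by adding $k - 5$ new vertices, pairwise adjacent and each adjacent to every vertex of $G$. Since each new vertex is universal in $G'$, any induced $P_6$ meeting a new vertex has at most three vertices (its two path-neighbors plus the universal vertex itself), so any induced $P_6$ in $G'$ already lies in $G$ and $P_6$-freeness is preserved. The new clique consumes exactly $k-5$ of the colors in any proper $k$-coloring, forcing $V(G)$ to be properly $5$-colored from the remaining palette; hence $G'$ is $k$-colorable if and only if $G$ is $5$-colorable. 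Since this transformation is polynomial and preserves $P_6$-freeness, it remains to prove NP-hardness for $k = 5$.

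For $k = 5$ I would reduce from a classical NP-complete problem, for example $3$-SAT or $3$-coloring of a restricted graph class such as $4$-regular graphs. The construction produces, from an instance $I$, a graph $G_I$ built from three kinds of parts: a small \emph{palette} of high-degree vertices whose role is to restrict which of the five colors are available at each remaining vertex; a \emph{variable gadget} for each variable (or input vertex), whose possible colorings encode its value; and a \emph{clause gadget} for each clause (or edge), enforcing the corresponding logical or coloring constraint. Attachments between gadgets are routed through the palette so that any would-be long induced path is short-circuited by a common palette neighbor. Correctness then reduces to checking that $G_I$ is $5$-colorable iff $I$ is a \textsc{yes}-instance, which is a direct if tedious case analysis on the coloring patterns of each gadget.

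The main obstacle is the tension between expressiveness and $P_6$-freeness. A naive palette consisting of vertices universal to $V(G_I)$ would make $G_I$ equivalent to a $3$-coloring instance on a $P_6$-free graph, which is polynomial-time solvable (by Theorem~\ref{3colP7}, since every $P_6$-free graph is $P_7$-free); so the palette must be only partially joined to $G_I$, creating more subtle color restrictions than the uniform loss of two colors. Designing variable and clause gadgets of small enough diameter, and choosing partial palette attachments so that no induced $P_6$ can span two gadgets, is where the bulk of the work lies. The proof concludes with a case analysis on a hypothetical induced $P_6$ in $G_I$: one argues that any two vertices in distinct gadgets either share a common high-degree palette neighbor (forcing the path to have length at most three) or are otherwise connected only by induced paths of length at most four, ruling out a $P_6$.
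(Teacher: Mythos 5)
The paper does not prove this statement; it cites it as Theorem~5 of Huang's paper \cite{huang}, which is where the actual construction lives. So there is no in-paper proof to compare against, only the reference.

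Your lifting step from $k=5$ to general $k\geq 5$ is correct and is the standard argument: a vertex universal in $G'$ can lie on an induced path of at most three vertices (it must be nonadjacent to all but two vertices of the path), so adding a clique of $k-5$ universal vertices preserves $P_6$-freeness, and that clique forces exactly $k-5$ colors off the palette available to $V(G)$. Hence $k$-colorability of $G'$ is equivalent to $5$-colorability of $G$.

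However, the base case $k=5$ is the entire substance of the theorem, and what you give there is a plan rather than a proof. You name the ingredients --- a palette, variable gadgets, clause gadgets, attachments routed through the palette --- and you correctly identify the central obstruction (a fully joined palette would collapse to $3$-coloring $P_6$-free graphs, which is polynomial via Theorem~\ref{3colP7}), but you never exhibit a single gadget, never specify which palette vertices attach where, and never verify $P_6$-freeness of anything concrete. The final paragraph, asserting that ``any two vertices in distinct gadgets either share a common high-degree palette neighbor \ldots\ or are otherwise connected only by induced paths of length at most four,'' is a property of a construction you have not written down, so it cannot be checked. Designing gadgets that simultaneously encode a hard constraint and admit such a common-neighbor short-circuiting argument is exactly the difficult part of Huang's proof; acknowledging the difficulty is not the same as resolving it. As it stands, this establishes only the easy half (NP membership and the reduction from $k=5$ to $k\geq 6$) and leaves the hardness of $5$-coloring $P_6$-free graphs unproved.
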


The only cases for which the complexity of $k$-coloring $P_t$-free
graphs is not known are $k=4$, $t=6$, and $k=3$, $t \geq 8$.
This is the first paper in a series of two.
The main result of the series is the following:
\begin{theorem} 
\label{main}
The \textsc{4-precoloring extension problem} can be
  solved in polynomial time for the class of $P_6$-free graphs.
\end{theorem}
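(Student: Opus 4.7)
The plan is a two-step reduction, both pieces of which are advertised in the abstract. The companion paper (Paper II in the series) provides what I will call the \emph{Reduction Lemma}: given a $P_6$-free graph $G$ and a $4$-precoloring $(G, X, f)$, one can construct in polynomial time a family $\mathcal{F}$ of $4$-precolorings of $G$, of size polynomial in $|V(G)|$, consisting of ``excellent'' precolorings $(G, X', f')$ with $X \subseteq X'$ and $f'|_X = f$, such that $(G, X, f)$ has a precoloring extension if and only if at least one member of $\mathcal{F}$ does. The present paper supplies the complementary algorithmic piece: a polynomial-time procedure that decides whether a given \emph{excellent} $4$-precoloring of a $P_6$-free graph has a precoloring extension, and outputs such an extension when one exists. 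Combining the two, Theorem~\ref{main} is immediate: enumerate $\mathcal{F}$ and run the algorithm of this paper on each member; return YES (together with an extension) whenever some call succeeds, and NO otherwise. The total running time is polynomial because $|\mathcal{F}|$ and each call are polynomially bounded.

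To build the algorithm for excellent precolorings, I would proceed by list propagation and bounded branching. Starting from the lists $L(v) = \{f'(v)\}$ for $v \in X'$ and $L(v) = [4]$ otherwise, I would iteratively (a) delete from $L(v)$ any color that already appears on a neighbor with singleton list, rejecting the instance if some $L(v)$ becomes empty; (b) use the structural hypotheses built into the ``excellent'' definition, together with $P_6$-freeness, to either force further singletons or isolate a polynomially bounded collection of ``seed'' vertices whose colorings we branch on. Once every remaining list has size at most $2$, reduce to $2$-list coloring and solve in polynomial time via $2$-SAT. The role of the ``excellent'' condition is exactly to ensure that the precolored set is rich enough that propagation terminates after polynomially many list reductions and without uncontrolled branching.

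The main obstacle is the structural analysis that makes propagation and branching both effective and polynomially bounded. In an arbitrary $P_6$-free graph with an arbitrary precoloring there is little grip on the uncolored part, and propagation can stall long before any list reaches size $2$. The right notion of ``excellent'' is what guarantees that enough precolored reference vertices are distributed throughout the graph so that neighborhood arguments in $P_6$-free graphs force colors or yield bounded branching. Defining this notion correctly, verifying that it is preserved under propagation, and bounding the total number of branches is the technical heart of this paper; the complementary burden for the Reduction Lemma, namely showing that an arbitrary precoloring can be replaced by polynomially many excellent ones, is deferred to Paper II.
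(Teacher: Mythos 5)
Your first paragraph is exactly the paper's proof of Theorem~\ref{main}: invoke the reduction from Paper~II (Theorem~\ref{Yaxioms} here) to produce a polynomial-size family of excellent starred precolorings, invoke the algorithm for excellent starred precolorings proved in this paper (Theorem~\ref{excellent}), and test each member. One load-bearing detail is elided in your paraphrase: Theorem~\ref{excellent} is a polynomial-time algorithm only when the seed $S$ is bounded by a fixed constant $C$ (the degree of the polynomial grows with $C$), and the matching guarantee of Theorem~\ref{Yaxioms} is precisely that every output member has $|S'| \leq C$ for a universal $C$. Without that bound the combination would not be polynomial; it is the hinge between the two theorems, not a minor hypothesis, so any writeup should surface it.

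Your second and third paragraphs sketch an internal algorithm for Theorem~\ref{excellent} that is quite far from what the paper actually does. The paper does not iterate propagation plus bounded branching until lists shrink to size two; instead it first replaces an excellent starred precoloring by an equivalent polynomial family of \emph{orthogonal} precolorings (via the intermediate notions clean, tidy, orderly, spotless, and the Sophie's-path lemmas), then constructs a \emph{companion triple} $(H,L,h)$ in which components of $G|Y^*$ are contracted and encoded as new vertices with derived lists, then refines $L$ to a family of \emph{insulated} list assignments each admitting $ij$-insulating chromatic cutsets, and only then reaches 2-SAT, where the 2-SAT variables encode which side of the $\{1,2\}$ versus $\{3,4\}$ split each far-side vertex lands on, not direct list colors. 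Naive propagation plus branching as you describe would stall on $P_6$-free instances; the excellent/orthogonal structure is exploited through the companion-triple and insulating-cutset machinery, not through a direct propagation argument. This divergence is not a gap in your proof of Theorem~\ref{main} itself, since there you may cite Theorem~\ref{excellent} as a black box, but it is worth knowing that the black box is built very differently than you sketch.
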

In contrast, the
\textsc{$4$-list coloring problem} restricted to $P_6$-free graphs is
$NP$-hard as proved by Golovach, Paulusma, and Song \cite{gps}.
As an immediate corollary of Theorem~\ref{main}, we obtain that 
the \textsc{$4$-coloring problem} for $P_6$-free graphs is also solvable in
polynomial time. This proves  a conjecture of Huang \cite{huang}, thus resolving the former open case above,  and completes the 
classification of the complexity of the \textsc{$4$-coloring problem} for graphs with a connected forbidden induced subgraph.

Let $G$ be a graph. For $X \subseteq V(G)$ we denote by
$G|X$ the subgraph induced by $G$ on $X$, and by
$G \setminus X$ the graph $G|(V(G) \setminus X)$.
If $X=\{x\}$, we write $G \setminus x$ to mean $G \setminus \{x\}$.
For disjoint subsets $A,B \subset V(G)$ we say that $A$ is \emph{complete} to $B$ if every vertex of $A$ is adjacent to every vertex of $B$, and that 
$A$ is \emph{anticomplete} to $B$ if every vertex of $A$ is non-adjacent to
every vertex of $B$. If $A=\{a\}$ we write $a$ is complete (or anticomplete)
to $B$ to mean $\{a\}$ that is complete (or anticomplete) to $B$.
If $a \not \in B$ is not complete and not anticomplete to $B$,
we say that $a$ is \emph{mixed} on $B$. Finally, if $H$ is an induced subgraph
of $G$ and $a \in V(G) \setminus V(H)$, we say that $a$ is \emph{complete to, 
anticomplete to}, or \emph{mixed on} $H$ if $a$ is complete to, anticomplete 
to,  or mixed on 
$V(H)$, respectively. For $v \in V(G)$ we write $N_G(v)$ (or $N(v)$ when there is no danger of confusion) to mean the set of vertices of $G$ that are adjacent to  $v$. Observe that since $G$ is simple, $v \not \in N(v)$. For $A \subseteq V(G)$,
an \emph{attachment} of $A$ is a vertex of $V(G) \setminus A$ complete to $A$.
For $B \subseteq V(G) \setminus A$ we denote by $B(A)$ the set of
attachments of $A$ in $B$.  If $F=G|A$, we sometimes
write $B(F)$ to mean $B(V(F))$.

Given a list assignment $L$ for $G$, we say that the pair $(G,L)$ is
colorable if $G$ is $L$-colorable.  For $X \subseteq V(G)$, we write
$(G|X,L)$ to mean the list coloring problem where we restrict the
domain of the list assignment $L$ to $X$.  Let $X \subset V(G)$ be
such that $|L(x)|=1$ for every $x \in X$, and let $Y \subset V(G)$. We
say that a list assignment $M$ is \emph{obtained from $L$ by updating
  $Y$ from $X$} if $M(v)=L(v)$ for every $v \not \in Y$, and
$M(v)=L(v) \setminus \bigcup_{x \in N(v) \cap X} \{L(x)\}$ for every
$v \in Y$.  If $Y=V(G)$, we say that $M$ is \emph{obtained from $L$ by
  updating from $X$}.  If $M$ is obtained from $L$ by updating from
$X^0(L)$, we say that $M$ is \emph{obtained from $L$ by updating}. Let
$L=L_0$, and for $i \geq 1$ let $L_i$ be obtained from $L_{i-1}$ by
updating. If $L_i=L_{i-1}$, we say that $L_i$ is \emph{obtained from
  $L$ by updating exhaustively}. Since
$0 \leq \sum_{v \in V(G)} |L_j(v)| < \sum_{v \in V(G)} |L_{j-1}(v)|
\leq 4|V(G)|$ for all $j < i$, it follows that $i \leq 4 |V(G)|$ and
thus $L_i$ can be computed from $L$ in polynomial time.

An \emph{excellent starred precoloring} of a graph $G$ is a six-tuple 
$P = (G, S, X_0, X, Y^*, f)$ such that 
\begin{enumerate}[(A)]
\item $f: S \cup X_0 \rightarrow \sset{1,2,3,4}$ is a proper coloring of $G|(S \cup X_0)$;
\item $V(G) = S \cup X_0 \cup X \cup Y^*$;
\item $G|S$ is connected and no vertex in $V(G) \setminus S$ is complete to $S$; \item every vertex in $X$ has neighbors of at least two different colors (with respect to $f$) in $S$;
\item no vertex in $X$ is mixed on a component of $G|Y^*$;  and
\item for every component of $G|Y^*$, there is a vertex in $S \cup X_0 \cup X$ complete to it. 
\end{enumerate}
We call $S$ the {\em seed} of $P$.
We define two list assignments associated with $P$. First, 
define $L_P(v) = \sset{f(v)}$ for every $v \in S \cup X_0$, and let
$L_P(v)=  \sset{1,2,3,4} \setminus (f(N(v) \cap S))$ for 
$v \not \in S \cup X_0$.  Second, $M_P$ is the list assignment obtained 
as follows.
First, define $M_1$ to be the list assignment for $G|(X \cup X_0)$
obtained from ${L_P}|\sset{X \cup X_0}$ by updating exhaustively;
let $X_1=\{x \in X \cup X_0 \; : \; |M_1(x_1)|=1\}$.
Now define $M_P(v)=L_P(v)$ if $v \not \in X \cup X_0$,
and $M_P(v)=M_1(v)$ if $v \in X \cup X_0$. 
Let $X^0(P)=X^0(M_P)$.  Then $S \cup X_0 \subseteq X^0(P)$.  A
\emph{precoloring extension of $P$} is a proper $4$-coloring $c$ of $G$ such
that $c(v)=f(v)$ for every $v \in S \cup X_0$; it follows that
$M_P(v)=\{c(v)\}$ for every $v \in X^0(P)$.  It will often be convenient
to assume that $X_0=X^0(P) \setminus S$, and this assumption can be made
without loss of generality. Note that in this case, $M_P(v) = L_P(v)$
for all $v \in X$.

For an excellent starred precoloring $P$ and a collection excellent
starred $\mathcal{L}$ of precolorings, we say that $\mathcal{L}$ is an
\emph{equivalent collection} for $P$ (or that $P$ is \emph{equivalent}
to $\mathcal{L}$) if $P$ has a precoloring extension if and only if at
least one of the precolorings in $\mathcal{L}$ has a precoloring
extension, and a precoloring extension of $P$ can be constructed from
a precoloring extension of a member of $\mathcal{L}$ in polynomial
time.

We break the proof of Theorem~\ref{main} into two independent parts, each handled in a separate paper of the series. 
In one part, we reduce the \textsc{4-precoloring extension problem}  
for  $P_6$-free graphs to determining if an excellent starred 
precolorings of a $P_6$-free graph has a precoloring extension, and finding one if it exists. In fact, we restrict the problem further, by ensuring that 
there is a universal bound (that works for all $4$-precolorings of all 
$P_6$-free graphs) on the size of the seed of the excellent starred precolorings that we need to consider. More precisely, we prove:

\begin{theorem}
\label{Yaxioms}
There exists an integer $C>0$ and  a polynomial-time algorithm with the following specifications.
\\
\\
{\bf Input:}  A 4-precoloring $(G,X_0,f)$ of a $P_6$-free graph $G$.
\\
\\
{\bf Output:}  A collection $\mathcal{L}$ of  excellent starred 
precolorings of $G$ such  that 
\begin{enumerate}
\item If for every $P' \in \mathcal{L}$ we can in polynomial time
  either find a precoloring extension of $P'$, or determine that none
  exists, then we can construct a 4-precoloring extension of
  $(G, X_0, f)$ in polynomial time, or determine that none exists:
\item $|\mathcal{L}| \leq |V(G)|^C$; and
\item for every $(G',S',X_0',X',Y^*,f') \in \mathcal{L}$, 
\begin{itemize}
\item $|S'| \leq C$;
\item $X_0 \subseteq S' \cup X_0'$;
\item $G'$ is an induced subgraph of $G$; and
\item $f'|X_0=f|X_0$.
\end{itemize}
\end{enumerate}
\end{theorem}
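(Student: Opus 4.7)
The plan is to build the collection $\mathcal{L}$ by branching over a bounded amount of information about the eventual seed $S'$ and its coloring, and then ``cleaning up'' each branch into a valid excellent starred precoloring through a sequence of further bounded branching steps. We may process connected components of $G$ independently, since coloring each component is independent once $X_0$ is colored. Within a component, we enumerate candidate cores $T$ consisting of a connected subgraph of size at most some absolute constant $C_0$ together with a proper $4$-coloring extending $f|X_0$. Using structural properties of $P_6$-free graphs (in particular, the existence of small dominating configurations, obtained from known results about dominating paths or dominating subgraphs of bounded radius in $P_6$-free graphs), the core $T$ is chosen so that it dominates enough of $G$ to restrict the lists of the remaining vertices in a useful way. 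Enumerating all such cores yields at most $|V(G)|^{C_0}$ branches.

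For each branch, we construct a tentative excellent starred precoloring as follows. Let $S$ be a connected subset of $T \cup X_0$ chosen minimally so that no external vertex is complete to $S$; place the remaining colored vertices into $X_0'$, and classify each uncolored vertex as belonging to $X$ (if it has neighbors of at least two colors in $S$) or to $Y^*$ (otherwise). Axioms (A) and (B) are immediate from the construction; (C) follows from the minimality of $S$; and (D) holds by definition of $X$. At this stage axioms (E) and (F) may fail, so a correction procedure is applied.

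The correction procedure handles (E) and (F) by further branching of bounded total size. A violation of (E) is a vertex $x \in X$ mixed on a component $D$ of $G|Y^*$; we branch on the color of $x$ and move $x$ into $X_0'$, which removes $x$ from $X$ and eliminates that violation. A violation of (F) is a component $D$ of $G|Y^*$ with no complete vertex in $S \cup X_0 \cup X$; here the $P_6$-freeness of $G$ is used to exhibit a small set of vertices in $N(D)$ whose colors can be guessed and whose addition to $S$ or $X_0'$ either directly produces a complete vertex for $D$, or splits $D$ into strictly smaller components that are already dominated by existing vertices.

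The main obstacle is the interaction of the two correction steps: enlarging $S$ to repair (F) can promote previously $Y^*$-vertices to $X$-vertices, potentially creating fresh violations of (E); conversely, branching on the color of a mixed $x$ and moving it to $X_0'$ can merge components of $G|Y^*$ and resurrect violations of (F). The heart of the argument is to set up a potential function, combining $|Y^*|$, the number of components of $G|Y^*$, and the number of outstanding mixed pairs $(x, D)$, that strictly decreases under each correction step. The $P_6$-free hypothesis is used precisely to rule out the long alternating configurations that would otherwise allow (E)-fixes and (F)-fixes to trigger each other indefinitely, so that the procedure terminates after polynomially many branches and produces the required collection $\mathcal{L}$.
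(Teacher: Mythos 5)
This theorem is not proved in the present paper at all: the authors state explicitly that ``the proof of Theorem~\ref{Yaxioms} is hard and technical, and we postpone it to the second paper of the series,'' namely the companion paper \emph{Four-coloring $P_6$-free graphs.\ II.\ Finding an excellent precoloring}. So there is no internal argument to compare against, and your sketch should be judged on its own terms as a blind reconstruction of that companion paper.

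As such, the sketch names the right ingredients (branching over a bounded connected core using dominating structures in $P_6$-free graphs; repairing axioms (E) and (F) by further bounded branching), but there is a genuine gap precisely where you yourself locate ``the heart of the argument.'' The theorem demands $|\mathcal{L}| \le |V(G)|^C$ and $|S'| \le C$ for a single absolute constant $C$. Your correction procedure repeatedly \emph{branches} -- on the colors of mixed vertices $x$ for (E), and on the colors of a ``small set'' of guessed vertices for (F). A potential function that decreases at each correction step bounds the \emph{depth} of the branching tree polynomially, but the size of $\mathcal{L}$ is the number of \emph{leaves}, which would then be exponential unless the total number of branching events along every root-to-leaf path is bounded by the fixed constant $C$. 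Nothing in the sketch forces this: a component $D$ of $G|Y^*$ could have linearly many $X$-vertices mixed on it (each needing an (E)-fix), and the $(E)$-fixes and $(F)$-fixes interacting across components could easily accumulate $\Omega(|V(G)|)$ guesses. For the same reason, vertices moved into $S$ during $(F)$-repairs can grow $|S'|$ without bound, violating the $|S'|\le C$ requirement. Showing that $P_6$-freeness caps the \emph{total} number of guessed vertices and seed additions by an absolute constant -- rather than just guaranteeing termination -- is exactly the hard content that the companion paper supplies and that your potential-function remark does not. Until that is filled in, the sketch establishes termination of a recoloring procedure, not the stated theorem.
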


The proof of Theorem~\ref{Yaxioms}  is hard and technical, and we postpone it
to the second paper of the series \cite{Paper2}.
The other part of the proof of Theorem~\ref{main} is an algorithm that tests  
in  polynomial time if an excellent starred precoloring (where the size of the 
seed is fixed) has a precoloring extension.
The goal of the present paper is to solve this problem. We prove:

\begin{theorem}
\label{excellent}
For every positive integer $C$ there exists a polynomial-time algorithm with the following specifications.
\\
\\
{\bf Input:}  An excellent starred precoloring $P=\esspc$ of a 
$P_6$-free graph $G$ with $|S| \leq C$.
\\
\\
{\bf Output:} A precoloring extension of $P$ or a determination
that none exists.
\end{theorem}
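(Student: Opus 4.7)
The plan is to reduce the precoloring extension problem to a polynomial-size 2-SAT instance, which can then be solved in polynomial time by standard algorithms. Assuming without loss of generality that $X_0 = X^0(P) \setminus S$, so that $M_P = L_P$, finding a precoloring extension of $P$ is equivalent to finding a proper $L_P$-coloring of $G$. By axiom (D), every vertex $x \in X$ has $|L_P(x)| = 2$, while vertices in $S \cup X_0$ have singleton lists and vertices in $Y^*$ have lists of size at most $4$. For each $x \in X$ I introduce a Boolean variable whose two truth values correspond to the two colors in $L_P(x)$, and edges inside $G|X$ become basic 2-SAT clauses.

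The key part is handling the components of $G|Y^*$. For each such component $K$, axiom (E) says that the set $A_K$ of vertices of $X$ with any neighbor in $K$ is in fact complete to $K$, and axiom (F) provides a complete attachment $a_K \in S \cup X_0 \cup A_K$. The residual list at any $y \in K$, once $S \cup X_0 \cup X$ has been colored, depends only on the set $T \subseteq [4]$ of colors used on $A_K$ together with the fixed colors of the $X_0$-neighbors of $y$. For each pair $x, x' \in A_K$ and each pair of admissible colors $(c, c')$, I determine whether the partial assignment $x \mapsto c$, $x' \mapsto c'$ can be completed to a proper $L_P$-coloring of $K$, and add a 2-SAT clause to forbid the pair if not; analogous unary clauses are also added. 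The resulting 2-SAT instance has polynomial size. If it is satisfiable, a satisfying assignment yields a coloring of $X$, which is then extended component-by-component to $Y^*$ by solving a residual 2-list coloring problem on each $K$; otherwise no extension of $P$ exists.

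The main obstacle is showing that colorability of each component $K$ is faithfully captured by pairwise (and unary) compatibilities on $A_K$. When the set $T$ of colors used on $A_K$ satisfies $|T| \geq 2$, the residual lists on $K$ have size at most $2$ and the problem reduces cleanly to a 2-SAT instance inside $K$, so pairwise compatibility on $A_K$ is easily enough. The genuinely hard case is when $|T| \leq 1$: some residual lists on $K$ may have size $3$, and one must leverage the $P_6$-freeness of $G|K$ together with the complete attachment $a_K$ either to solve the residual $3$-list coloring on $K$ directly in polynomial time, or to branch over a bounded number of guesses that restore list size $2$ while keeping the 2-SAT encoding exact. Verifying that pairwise compatibility on $A_K$ genuinely implies colorability of $K$ in every case is the step where most of the technical effort---and most of the use of $P_6$-freeness---is concentrated.
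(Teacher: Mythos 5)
Your plan hits a genuine obstruction at precisely the point you flag as hard, and the obstruction is not a mere technicality: pairwise compatibility on $A_K$ does \emph{not} capture colorability of $K$, so the 2-SAT instance you describe can be satisfiable while no precoloring extension exists. Here is the shape of the failure. Since $A_K$ is complete to $K$ (and by axiom (E) this is exactly the set of $X$-vertices with any neighbor in $K$), the effect of a coloring of $A_K$ on $K$ is summarized by the set $T = c(A_K) \subseteq [4]$: the residual list of $y \in K$ is $M_P(y)\setminus T$, and $K$ is colorable iff $Q = [4]\setminus T$ is a ``good'' set for $K$. Goodness of $Q$ is up-closed, so the constraint is a conjunction, over the inclusion-maximal bad sets $Q^*$, of the clauses ``some color in $[4]\setminus Q^*$ is unused on $A_K$.'' When $|[4]\setminus Q^*|=2$, say $\{1,2\}$, this is $\bigwedge_{x,x'\in A_K}\big(c(x)\neq 1\vee c(x')\neq 2\big)$, which is 2-SAT and is what your encoding produces. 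But when $|[4]\setminus Q^*|=3$ the expansion is a conjunction of \emph{ternary} clauses, and this case genuinely occurs: if $K$ contains an edge, every singleton is bad, and if in addition every pair is good, then $\{1\}$ is a maximal bad set and the constraint ``at least one of $2,3,4$ is absent from $T$'' cannot be written as a conjunction of 2-clauses on the $x$-variables. Concretely, take $A_K = \{x_1,x_2,x_3\}$ a stable set with $M_P(x_i)=\{i,4\}$ and $K$ reduced to a residual $3$-list problem in which any two of $\{1,2,3\}$ suffice but all three together do not: then every pair $(x_i\mapsto i,\; x_j\mapsto j)$ extends (put $x_k\mapsto 4$), so your procedure adds no forbidding clause, yet the total assignment $x_i\mapsto i$ is infeasible. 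So satisfiability of your instance does not imply the existence of an extension.

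The paper's architecture exists precisely to rule out configurations like the one above before any 2-SAT is written down. Section~\ref{sec:orthogonal} spends several branching lemmas (built on Lemmas~\ref{Sophiespath} and~\ref{Sophiespath2}, which are genuine $P_6$-free structure theorems) to replace $P$ by a polynomial family of \emph{orthogonal} precolorings, in which every $A_K$ lies in $X_{ab}\cup X_{cd}$ for some complementary pair $\{a,b\},\{c,d\}$; combined with neighbor contraction (Section~\ref{sec:companion}) this forces $A_K$ to behave like at most two ``super-vertices,'' which is exactly what makes the constraint pairwise again. Even then, the paper does not attempt a single flat 2-SAT encoding: it first re-encodes $K$ by a small number of auxiliary vertices $z$ whose lists record the maximal bad sets (the companion triple of Theorem~\ref{companion}), then further branches to impose insulating chromatic cutsets (Section~\ref{insulatingcutsets}), and only the pieces $H_i$ that result from this decomposition are handed to 2-SAT (Lemma~\ref{farside}). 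Your idea of ``branching over a bounded number of guesses that restore list size $2$'' gestures at this, but the content of the proof is in identifying \emph{what} to guess and \emph{why} the $P_6$-free lemmas guarantee the guesses are few; the proposal does not supply that and the direct pairwise encoding on $X$ is incorrect without it.
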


Clearly, Theorem~\ref{Yaxioms} and Theorem~\ref{excellent} together imply
Theorem~\ref{main}.
The proof of Theorem~\ref{excellent} consists of several steps. At each step 
we replace the problem that 
we are trying to solve by a polynomially sized collection of simpler problems,
and the problems created in the last step can be encoded via 2-SAT.
Here is an outline of the proof. 
First we show that  an excellent starred precoloring $P$ of a $P_6$-free 
graph $G$  can be replaced by a polynomially sized collection $\mathcal{L}$ of 
excellent starred precolorings of $G$ that have an additional property 
(to which we refer  as   ``being orthogonal'') and  $P$ has a 
precoloring extension if and only  if some member of $\mathcal{L}$  does.
Thus in order 
to prove Theorem~\ref{excellent}, it is enough to be able to test if an 
orthogonal excellent starred precoloring of a $P_6$-free graph has a 
precoloring  extension. Our next step is an algorithm  whose 
input is an  orthogonal excellent starred precoloring $P$ of a $P_6$-free graph $G$, and whose output is a ``companion triple'' for $P$. A companion triple 
consists 
of a graph $H$ that may not be $P_6$-free, but certain parts  of it are, a
list assignment $L$ for $H$, and a correspondence function $h$ that establishes
the connection between $H$ and $P$. Moreover, in order to test if $P$ has
a precoloring extension, it is enough  to test if  $(H,L)$ is colorable.

The next  step of the algorithm is 
replacing $(H,L)$ by a polynomially sized collection  $\mathcal{M}$
of list assignments for $H$, such that $(H,L)$ is colorable if and only if
there exists $L' \in \mathcal{L}$ such that $(H,L')$ is colorable, and
in addition for every $L' \in \mathcal{L}$
the pair $(H,L')$ is ``insulated''.
Being insulated means that
$H$ is the union of four induced subgraphs $H_1, \ldots, H_4$, and 
in order to test if $(H,L')$ is colorable, it is 
enough to test if $(H_i, L')$ is colorable for each $i \in \{1,2,3,4\}$.
The final step of the algorithm is converting the problem of coloring
each $(H_i,L')$ into a $2$-SAT problem, and solving it in polynomial time.
Moreover, at each step of the proof, if a coloring exists, then  we can find 
it,  and convert in polynomial time into a precoloring extension of $P$.

This paper is organized as follows. In Section~\ref{sec:orthogonal} we
produce a collection $\mathcal{L}$ of orthogonal excellent starred 
precolorings.  In Section~\ref{sec:companion} we construct a companion triple 
for an orthogonal precoloring. In Section~\ref{insulatingcutsets} we 
start with a precoloring and its companion triple, and construct a
collection $\mathcal{M}$ of lists $L'$ such that every pair
$(H,L')$ is insulated. Finally, in Section~\ref{sec:2sat} we describe the 
reduction to 2-SAT. Section~\ref{sec:complete} contains the proof of 
Theorem~\ref{excellent} and of Theorem~\ref{main}.

\section{From Excellent to Orthogonal} \label{sec:orthogonal}

Let $P=\esspc$ be an excellent  starred   precoloring. 
For $v \in X \cup Y^*$, the \emph{type} of $v$ is the set $N(v) \cap S$.
Thus the number of possible types for a given precoloring is at most
$2^{|S|}$. In this section we will prove several lemmas that allow us to 
replace a given precoloring by an equivalent polynomially sized collection of
``nicer'' precolorings, with the additional property that the size of the seed
of each of the new precolorings is bounded by a function of the size of the
seed of the precoloring we started with. Keeping the size of the seed bounded 
allows us  to maintain the  property that the number of different types of 
vertices of $X \cup Y^*$ is bounded, and therefore, from the point of view of running 
time, we can always consider each type separately.

For $T \subseteq S$ we denote by $L_P(T)$ the set
$\{1,2,3,4\} \setminus \bigcup_{v \in T}\{f(v)\}$. Thus if $v$ is
of type $T$, then $L_P(v)=L_P(T)$. For $T \subseteq S$ and 
$U \subseteq X\cup Y^*$ we denote by
$U(T)$ the set of vertices of $U$ of type $T$.

A subset $Q$ of $X$ is \emph{orthogonal} if
there exist $a,b \in \{1,2,3,4\}$ such that for every $q \in Q$
either $M_P(q)=\{a,b\}$ or $M_P(q) = \{1,2,3,4\} \setminus \{a,b\}$.
We say that $P$ is \emph{orthogonal} if $N(y) \cap X$ is orthogonal for 
every $y \in Y^*$.

The goal of this section is to prove that for every excellent starred 
precoloring $P$ 
of a $P_6$-free graph $G$, there is a  an equivalent collection 
$\mathcal{L}(P)$ of  orthogonal   excellent starred precolorings of $G$. 
We start with a  few technical  lemmas.

\begin{lemma}
\label{Sophiespath}
Let $P=\esspc$ be an excellent  starred  precoloring of a $P_6$-free graph $G$.
Let  
$i,j \in \sset{1,2,3,4}$ and $k \in \{1,2,3,4\} \setminus \{i,j\}$.
Let $T_i,T_j$ be types such that $L_P(T_i)=\{i,k\}$
and $L_P(T_j)=\{j,k\}$, and let $x_i,x_i' \in X(T_i)$ and $x_j,x_j' \in X(T_j)$.
Suppose that $y_i,y_j \in Y^*$ are such that  $i,j \in M_P(y_i) \cap M_P(y_j)$, 
where 
possibly  $y_i=y_j$. Suppose further that the only possible edge among 
$x_i,x_i',x_j,x_j'$ is $x_ix_j$, 
and $y_i$ is adjacent to $x_i'$ and not to $x_i$, and $y_j$ is adjacent to 
$x_j'$ and not to $x_j$. Then there does not exist $y \in Y^*$
with $i,j \in M_P(y)$ and such that $y$ is complete to $\{x_i,x_j\}$ and
anticomplete to $\{x_i',x_j'\}$.
\end{lemma}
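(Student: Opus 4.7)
The plan is to argue by contradiction: assume such a $y$ exists and construct an induced $P_6$ in $G$, contradicting $P_6$-freeness. The first observation uses axiom (E). Because $x_i' \in X$ is adjacent to $y_i \in Y^*$ but not to $y$, and $x_i'$ cannot be mixed on a component of $G|Y^*$, the vertices $y$ and $y_i$ lie in different components of $G|Y^*$; in particular $y \not\sim y_i$. The symmetric argument using $x_j'$ and $y_j$ gives $y \not\sim y_j$.

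Next I would pick convenient seed vertices. From $L_P(T_i) = \sset{i,k}$ and $L_P(T_j) = \sset{j,k}$, writing $\sset{i,j,k,\ell} = \sset{1,2,3,4}$ we get $f(T_i) = \sset{j,\ell}$ and $f(T_j) = \sset{i,\ell}$, so we may pick $s_j \in T_i$ with $f(s_j) = j$ and $s_i \in T_j$ with $f(s_i) = i$. By the definition of type, $s_j$ is adjacent to $x_i, x_i'$ and to neither of $x_j, x_j'$, while $s_i$ is adjacent to $x_j, x_j'$ and to neither of $x_i, x_i'$; and since $i, j \in M_P(w)$ for every $w \in \sset{y, y_i, y_j}$, neither $s_i$ nor $s_j$ is adjacent to any of $y, y_i, y_j$. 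These facts, combined with the hypotheses, show by a routine check that the walk
\[
x_i'\ -\ s_j\ -\ x_i\ -\ y\ -\ x_j\ -\ s_i\ -\ x_j'
\]
is an induced $P_7$ (which contains an induced $P_6$), with the sole exception of the two possible chords $x_ix_j$ and $s_is_j$; if neither chord is present we are done.

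What remains is the chord case split, which is the main bookkeeping obstacle. If $x_i \sim x_j$ but $s_i \not\sim s_j$, the six-vertex path $x_i' - s_j - x_i - x_j - s_i - x_j'$ is an induced $P_6$. If $s_i \sim s_j$, I would use $y_i$ or $y_j$ as an endpoint, in three subcases: (a) if $y_i \sim x_j'$, which always holds when $y_i = y_j$, then $y_i - x_j' - s_i - s_j - x_i - y$ is an induced $P_6$; (b) if $y_j \sim x_i'$, the symmetric path $y_j - x_i' - s_j - s_i - x_j - y$ works; and (c) otherwise $y_i \neq y_j$ and neither adjacency holds, so axiom (E) applied to $x_i'$ puts $y_j$ outside the component of $y_i$ in $G|Y^*$, giving $y_i \not\sim y_j$ and making $y_i - x_i' - s_j - s_i - x_j' - y_j$ an induced $P_6$. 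In each subcase the work is verifying the roughly ten non-consecutive pairs along the candidate path, all of which reduce to the type constraints, the $M_P$-lists of the $y$'s, or the component observation from the first step.
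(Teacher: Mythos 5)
Your proof is correct and takes essentially the same approach as the paper: the same seed vertices $s_i, s_j$ are chosen, the same candidate path $x_i'-s_j-x_i-y-x_j-s_i-x_j'$ is built, and the same chord case-split on $x_ix_j$ and $s_is_j$ is resolved by bringing in $y_i$ or $y_j$. The only cosmetic difference is that in the subcase $y_i\sim x_j'$ the paper's final $P_6$ is $x_j'-y_i-x_i'-s_j-x_i-y$ (which avoids $s_i$) rather than your $y_i-x_j'-s_i-s_j-x_i-y$.
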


\begin{proof}
Suppose such $y$ exists. Since 
no vertex of $X$ is mixed on a component of $G|Y^*$, it follows that
$y$ is anticomplete to $\{y_i,y_j\}$. Since $x_i,x_i' \in X$ and
$i,k \in L_P(T_i)$, it follows that there exists $s_j \in T_i$ with
$L_P(s_j)=\{j\}$. 
Similarly, there exists $s_i \in Tj$ with $L_P(s_i)=\{i\}$. 
Since $i \in L_P(T_i)$ and $j \in L_P(T_j)$, it follows that
$s_i$ is anticomplete to $\{x_i,x_i'\}$ and $s_j$ is anticomplete to 
$\{x_j,x_j'\}$.

Since $i,j \in M_P(y_i) \cap M_P(y_j) \cap M_P(y)$ 
it follows that $\{s_i,s_j\}$  is anticomplete to $\{y_i,y_j,y\}$.
Since $x_i'-s_j-x_i-y-x_j-s_i-x_j'$ (possibly shortcutting through $x_ix_j$)
is not a $P_6$ in $G$, it follows that $s_i$ is adjacent to $s_j$.
If $y_i$ is non-adjacent to $x_j'$, and $y_j$ is non-adjacent to $x_i'$,
then $y_i \neq y_j$, and since $P$ is excellent, $y_i$ is non-adjacent to $y_j$, and so 
 $y_i-x_i'-s_j-s_i-x_j'-y_j$ is a $P_6$, 
a contradiction, so we may assume that $y_i$ is adjacent to $x_j'$.
 But now 
$x_j'-y_i-x_i'-s_j-x_i-y$
is a $P_6$, a contradiction. This proves Lemma~\ref{Sophiespath}.
\end{proof}

\begin{lemma}
\label{Sophiespath2}
Let $P=\esspc$ be an excellent  starred  precoloring  of a $P_6$-free graph $G$.
Let $\{i,j,k,l\}=\{1,2,3,4\}$. 
Let $T_i,T_j$ be types such that $L_P(T_i)=\{i,k\}$
and $L_P(T_j)=\{j,k\}$, and let $x_i,x_i' \in X(T_i)$ and $x_j,x_j' \in X(T_j)$.
Let $y_i^i,y_j^i \in Y^*$ with $i,l \in M_P(y_i^i) \cap M_P(y_j^i)$,
and let $y_i^j,y_j^j \in Y^*$ with $j,l  \in M_P(y_i^j) \cap M_P(y_j^j)$,
where possibly $y_i^i=y_j^i$ and $y_i^j=y_j^j$.
Assume that
\begin{itemize}
\item  some component $C_i$ of $G|Y^*$ contains both $y_i^i,y_i^j$; 
\item some component $C_j$  of $G|Y^*$ contains both $y_j^i,y_j^j$;
\item for every $t \in \{i,j\}$ there is a path  $M$ in $C_t$ from
$y_t^i$ to $y_t^j$  with $l \in M_P(u)$ for every $u \in V(M)$;
\item the only possible edge among 
$x_i,x_i',x_j,x_j'$ is $x_ix_j$;
\item $y_i^i,y_i^j$ are adjacent to $x_i'$ and not to $x_i$; 
\item $y_j^i,y_j^i$ are adjacent to  $x_j'$ and not to $x_j$.
\end{itemize}
Then there do not exist  $y^i,y^j \in Y^*$
with $i,l \in M_P(y^i)$, $j,l \in M_P(y^j)$ and  such that 
\begin{itemize}
\item some component $C$ of $G|Y^*$ 
contains both $y^i$ and $y^j$, and
\item $l \in M_P(u)$ for every $u \in V(C)$, and
\item  $\{y^i,y^j\}$ is complete to  $\{x_i,x_j\}$ and anticomplete to 
$\{x_i',x_j'\}$.
\end{itemize}
\end{lemma}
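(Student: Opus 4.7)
The plan is to mimic Lemma~\ref{Sophiespath}'s proof, with the single vertex $y$ replaced by the pair $y^i, y^j$ bridged through the component $C$ whose vertices all carry $l$ in their $M_P$-lists. Suppose for contradiction $y^i, y^j \in Y^*$ and $C$ exist as in the conclusion. Axiom (E) immediately gives $C \ne C_i$ and $C \ne C_j$: for instance, $x_i$ is adjacent to $y^i \in V(C)$ but non-adjacent to $y_i^i \in V(C_i)$, so $C = C_i$ would make $x_i$ mixed on $C_i$. As in Lemma~\ref{Sophiespath}, choose $s_j \in T_i$ with $f(s_j) = j$ and $s_i \in T_j$ with $f(s_i) = i$; both exist since $L_P(T_i) = \{i,k\}$ and $L_P(T_j) = \{j,k\}$ force $f(T_i) = \{j,l\}$ and $f(T_j) = \{i,l\}$. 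The color-$l$ hypothesis along $M_i, M_j$ and $C$ ensures that any $S$-vertex of color $l$ is anticomplete to $V(M_i) \cup V(M_j) \cup V(C)$; this is the role of the two paths $M_i, M_j$ in the hypothesis.

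The first step is to run the 7-vertex $P_6$-hunt of Lemma~\ref{Sophiespath} in two variants. In the sequence $x_i' - s_j - x_i - y^j - x_j - s_i - x_j'$ the chord $s_j y^j$ is killed by $j \in M_P(y^j)$, leaving only $s_i s_j$, $x_i x_j$, and $s_i y^j$ as possible chords; by $P_6$-freeness, at least one is an edge. The symmetric sequence with $y^i$ in place of $y^j$ yields the chord list $s_i s_j$, $x_i x_j$, $s_j y^i$ (using $i \in M_P(y^i)$). A short case split in the spirit of the ``shortcutting through $x_i x_j$'' step in Lemma~\ref{Sophiespath} either produces an induced $P_6$ directly, or forces $s_i s_j \in E(G)$ and $x_i x_j \notin E(G)$, at which point I would move to the endgame.

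In the surviving case I would build the final $P_6$ in the style of $y_i - x_i' - s_j - s_i - x_j' - y_j$ from Lemma~\ref{Sophiespath}, choosing the endpoints among $y_i^i, y_i^j, y_j^i, y_j^j$ so the color constraints kill the inner chords ($y_i^j$ is non-adjacent to $s_j$ via $j \in M_P(y_i^j)$, and $y_j^i$ is non-adjacent to $s_i$ via $i \in M_P(y_j^i)$). Any surviving bad chord of the form $y_i^? x_j'$, $y_j^? x_i'$, or $y_i^? y_j^?$ is handled exactly as in the final paragraph of Lemma~\ref{Sophiespath}'s proof: if it appears, one endpoint of the candidate $P_6$ is slid along $M_i$ or $M_j$ to its twin (which lies in the same component $C_i$ or $C_j$ by hypothesis), and axiom (E) turns any persisting bad chord into a vertex of $X$ mixed on $C_i$ or $C_j$, a contradiction. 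The main obstacle, and the reason this lemma is strictly more intricate than Lemma~\ref{Sophiespath}, is precisely that $y^i$ and $y^j$ each carry only one of $\{i,j\}$ in their $M_P$-lists, so chords into $s_i, s_j$ cannot be ruled out vertex-by-vertex; the two-vertex, two-path structure of the hypothesis is designed exactly so that every missing ``killing color'' can be recovered by switching to the other end of $C$, $M_i$, or $M_j$.
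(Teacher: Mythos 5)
The high-level setup in your proposal is sound: you correctly identify that axiom~(E) gives $C \ne C_i, C_j$, you choose $s_i, s_j$ exactly as in Lemma~\ref{Sophiespath}, and you correctly observe the role of the colour-$l$ hypothesis in making an $S$-vertex of colour $l$ anticomplete to $V(M_i)\cup V(M_j)\cup V(C)$. But there is a genuine gap in the endgame, and it is precisely the part of the argument where this lemma departs from Lemma~\ref{Sophiespath}.

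You propose the candidate $P_6$ $y_i^j - x_i' - s_j - s_i - x_j' - y_j^i$ and observe that the two colour constraints kill the chords $y_i^j s_j$ and $y_j^i s_i$. You then say that every remaining bad chord is ``of the form $y_i^? x_j'$, $y_j^? x_i'$, or $y_i^? y_j^?$'' and is killed by sliding an endpoint along $M_i$ or $M_j$ and invoking axiom~(E) to get a vertex of $X$ mixed on $C_i$ or $C_j$. This is where the proof breaks: the chords you have \emph{not} addressed are $y_i^j s_i$ and $y_j^i s_j$ (or, in the paper's choice of endpoints $y_i^i, y_j^j$, the chords $y_i^i s_j$ and $y_j^j s_i$). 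These go into $S$, not $X$, so axiom~(E) does not apply to them at all, and ``switching to the other end'' of $M_i$ or $M_j$ merely trades one such open chord for the other (e.g.\ replacing $y_i^j$ by $y_i^i$ kills $y_i^j s_i$ but reopens $y_i^i s_j$). You acknowledge that these chords are the obstruction, but the mechanism you propose cannot close them.

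What the paper actually does at this point is different in kind: it introduces $s_l\in S$ with $f(s_l)=l$ complete to $X(T_j)$, proves that $s_l$ is complete to $\{x_i,x_i'\}$ as well (by a separate $P_6$-hunt), and then \emph{accepts} that one of the two $S$-chords, say $s_j y_i^i$, must be present. Since $s_j$ is non-adjacent to $y_i^j$ (colour $j$), there is a ``switching edge'' $ab$ on the colour-$l$ path $M$ in $C_i$ from $y_i^j$ to $y_i^i$ with $s_j$ adjacent to $a$ but not $b$; because $l\in M_P(u)$ on $M$, $s_l$ is anticomplete to $\{a,b\}$, and one of $b-a-s_j-x_i-s_l-x_j'$ and $b-a-s_j-s_l-x_j'-y_j^j$ is an induced $P_6$. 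Your plan contains neither the claim that $s_l$ is complete to $\{x_i,x_i'\}$ nor this switching-edge construction, so the argument does not close. (As a minor point, your first step does not in fact force $x_i x_j\notin E$; the paper's first reduction only establishes $s_i s_j\in E$, and $x_i x_j$ may still be an edge afterward.)
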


\begin{proof}
Suppose such $y^i,y^j$ exist. 
Since $P$ is an excellent starred precoloring,
no vertex of $X$ is mixed on a component  of $G|Y^*$, and therefore
$V(C)$ is anticomplete to $V(C_i) \cup V(C_j)$.
Since $x_i,x_i' \in X$ and
$i,k \in L_P(T_i)$, it follows that there exists $s_j \in T_i$ with
$L_P(s_j)=\{j\}$. 
Similarly, there exists $s_i \in Tj$ with $L_P(s_i)=\{i\}$. 
Since $i \in L_P(T_i)$ and $j \in L_P(T_j)$, it follows that
$s_i$ is anticomplete to $\{x_i,x_i'\}$ and $s_j$ is anticomplete to 
$\{x_j,x_j'\}$. Since $i \in M_P(y^i) \cap M_P(y_i^i) \cap M_P(y_j^i)$, it follows that $s_i$ is anticomplete to
$\{y^i,y_i^i,y_j^i\}$, and similarly $s_j$ is anticomplete to 
$\{y^j,y_i^j,y_j^j\}$.

First we prove that $s_i$ is adjacent to $s_j$. Suppose not.
Since $x_i'-s_j-x_i-x_j-s_i-x_j'$ is not a $P_6$ in $G$, it
follows that $x_i$ is non-adjacent to $x_j$. 
But now $x_i'-s_j-x_i-y^j-x_j-s_i$ or  $x_i'-s_j-x_i-y^j-s_j-x_j'$ is a $P_6$ 
in $G$, a contradiction. This proves that $s_i$ is adjacent to $s_j$.

If $y_i^j$ is adjacent to $x_j'$, then   $x_j'-y_i^j-x_i'-s_j-x_i-y^j$ is a $P_6$,
a contradiction. Therefore  $x_j'$ is non-adjacent to $y_i^j$, and therefore
$x_j'$ is anticomplete to $C_i$. Similarly, $x_i'$ is anticomplete
to $C_j$. In particular it follows that $C_i \neq C_j$.

Since $L_P(T_j)=\{i,k\}$ there exists $s_l \in S$ with $L_P(s_l)=\sset{l}$ such that
$s_l$ is complete to $X(T_j)$. Since $l \in M_P(y)$ for every
$y \in \{y_i^i,y_i^j,y_j^i,y_j^j,y^i,y^j\}$, it follows
that $s_l$ is anticomplete to $\{y_i^i,y_i^j,y_j^i,y_j^j,y^i,y^j\}$.
Recall that $x_i,x_i' \in X(T_i)$, and so no vertex of $S$ is mixed
on $\{x_i,x_i'\}$. Similarly no vertex of $S$ is mixed on $\{x_j,x_j'\}$.
If $s_l$ is anticomplete to $\{x_i,x_i'\}$, then one of
$y_i^j-x_i'-s_j-s_l-x_j'-y_j^j$,
$x_i'-s_j-x_i-y^j-x_j-s_l$,   $x_i'-s_j-x_i-x_j-s_l-x_j'$  
is a $P_6$, so $s_l$ is complete  to $\{x_i,x_i'\}$.

Since $y_i^i-x_i'-s_j-s_i-x_j'-y_j^j$ is not
a $P_6$, it follows that either $s_j$ is adjacent to $y_i^i$, or
$s_i$ is adjacent to $y_j^j$. We may assume that $s_j$ is adjacent
to $y_i^i$.

Let $M$ be a path in $C_i$ from $y_i^j$ to $y_i^i$ with 
$l \in M_P(u)$ for every $u \in V(M)$.
Since $s_j$ is adjacent to $y_i^i$ and not to $y_i^j$, there is
exist adjacent $a,b \in V(M)$ such that $s_j$ is adjacent to $a$ and not to 
$b$. Since $l \in M_P(u)$ for every $u \in V(M)$, it follows that
$s_l$ is anticomplete to $\{a,b\}$.
But now if $s_l$ is non-adjacent to $s_j$, then $b-a-s_j-x_i-s_l-x_j'$ 
is a $P_6$, and if $s_l$ is adjacent to $s_j$, then
$b-a-s_j-s_l-x_j'-y_j^j$  is a $P_6$; in both cases a contradiction.
This proves  Lemma~\ref{Sophiespath2}. 
\end{proof}

Let $P=\esspc$ be an excellent starred precoloring of a $P_6$-free graph $G$.
Let $S'' \subseteq  X$, and let $X_0'' \subseteq X \cup Y^*$. 
Let $f': S \cup X_0 \cup S'' \cup X_0''  \rightarrow \{1,2,3,4\}$ be 
such that $f'|(S \cup X_0)=f|(S \cup X_0)$ and 
$(G, S \cup X_0 \cup S'' \cup X_0'', f')$
is a 4-precoloring of $G$. 
Let  $X''$ be the set of vertices $x$ of $X \setminus X_0''$ such that
$x$ as a neighbor $z \in S''$ with $f'(z) \in M_P(x)$.
Let 
$$S'=S \cup  S''$$
$$X_0'=X_0 \cup X'' \cup X_0''$$
$$X'=X \setminus (X'' \cup S'' \cup X_0'')$$ 
$${Y^*}'=Y^* \setminus X_0''.$$

We say that $P'=(G,S',X_0',X',{Y^*}',f')$ is  \emph{obtained from $P$ by moving 
$S''$ to the seed  with colors $f'(S'')$, and moving $X_0''$ to $X_0$ with 
colors  $f'(X_0'')$}. Sometimes we say that ``we 
move $S''$ to $S$ with colors $f'(S'')$, and  $X_0''$ to $X_0$ with colors 
$f'(X_0'')$''.

In the next lemma we show that this operation creates another excellent starred
precoloring.

\begin{lemma} 
\label{movetoseed}
Let $P=\esspc$ be an excellent starred precoloring of a $P_6$-free graph $G$.
Let $S'' \subseteq X$ and $X_0'' \subseteq X \cup Y^*$, and let 
$S',X_0',X', {Y^*}',f'$ be as above. 
Then either $P'=(G,S',X_0',X',{Y^*}',f')$ is an excellent starred 
precoloring.
\end{lemma}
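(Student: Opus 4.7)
The plan is to verify each of the six axioms (A)--(F) in the definition of an excellent starred precoloring for $P' = (G, S', X_0', X', {Y^*}', f')$, using the corresponding axioms for $P$ together with the containments $S \subseteq S'$, $X_0 \subseteq X_0'$, $X' \subseteq X$, ${Y^*}' \subseteq Y^*$, and (a short set-algebra check) $S \cup X_0 \cup X \subseteq S' \cup X_0' \cup X'$. Each axiom then reduces to a few lines of bookkeeping.

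Axiom (A) is essentially the hypothesis that $(G, S \cup X_0 \cup S'' \cup X_0'', f')$ is a $4$-precoloring; if $f'$ must be extended to $X''$ to make this a proper colouring on all of $S' \cup X_0'$, I would pick, for each $x \in X''$, any colour from $M_P(x)$ not forbidden by its already-coloured neighbours, which exists because $|M_P(x)| \geq 1$ by definition. Axiom (B), the partition, is a direct set computation: since $S'' \subseteq X$, $X'' \subseteq X$, and $X_0'' \subseteq X \cup Y^*$, one checks $S' \cup X_0' \cup X' \cup {Y^*}' = S \cup X_0 \cup X \cup Y^* = V(G)$.

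For axiom (C) there are two parts. Connectivity of $G|S'$ follows because $G|S$ is connected by (C) for $P$, and each vertex $s \in S'' \subseteq X$ has, by axiom (D) for $P$, neighbours of at least two distinct $f$-colours in $S$; in particular at least one neighbour in $S$, so attaching $S''$ to $G|S$ preserves connectivity. The non-completeness clause is monotone in the seed: any $v \notin S'$ also satisfies $v \notin S$, and is therefore not complete to $S$ by (C) for $P$, hence not complete to the superset $S'$. Axiom (D) is inherited directly: for $x \in X' \subseteq X$, the neighbours of two distinct $f$-colours in $S$ given by (D) for $P$ still lie in $S'$ with the same $f'$-colours, since $f'|_S = f|_S$.

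For (E) and (F), the key observation is that any component $C'$ of $G|{Y^*}'$ is a connected subset of $Y^*$ (because ${Y^*}' \subseteq Y^*$) and hence lies inside some component $C$ of $G|Y^*$. A vertex $x \in X' \subseteq X$ is not mixed on $C$ by (E) for $P$, so it is not mixed on $C'$ either; this yields (E). For (F), a vertex $v \in S \cup X_0 \cup X$ complete to $V(C)$ (guaranteed by (F) for $P$) is complete to $V(C') \subseteq V(C)$, and lies in $S' \cup X_0' \cup X'$ by the inclusion noted in the first paragraph. I expect the only real subtlety to be the careful handling of $X''$ in axiom (A); all remaining checks are straightforward unpacking of the definitions, and no use of $P_6$-freeness or of the earlier lemmas is needed.
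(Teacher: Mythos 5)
Your proof takes essentially the same approach as the paper's: check axioms (A)--(F) one by one using the containments $S \subseteq S'$, $X' \subseteq X$, ${Y^*}' \subseteq Y^*$, $S \cup X_0 \subseteq S' \cup X_0'$, and (for connectivity) the observation from axiom (D) that each $s \in S'' \subseteq X$ has a neighbor in $S$. Your treatments of (B)--(F) match the paper's terse checks.

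The only place where you go beyond what the paper records is axiom (A), and there your justification has a gap. You propose extending $f'$ to $X''$ by "pick\[ing\] any colour from $M_P(x)$ not forbidden by its already-coloured neighbours, which exists because $|M_P(x)| \geq 1$." The conclusion does not follow: a nonempty list $M_P(x)$ says nothing about whether its colors are all already used by $x$'s neighbors in $S' \cup X_0''$ (indeed $x \in X''$ \emph{guarantees} one neighbor $z \in S''$ with $f'(z) \in M_P(x)$, and $x$ could easily have other precolored neighbors consuming the rest). To be fair, the paper's statement itself is defective on exactly this point --- note the dangling ``either'' in the lemma statement, which suggests a dropped second alternative, and the fact that $f'$ is only hypothesized on $S \cup X_0 \cup S'' \cup X_0''$, not on $X''$. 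In the paper's applications (Lemmas~\ref{clean}, \ref{tidy}, etc.) this is resolved by (a) working under the WLOG assumption $X_0 = X^0(P) \setminus S$, so $|M_P(x)| = 2$ for every $x \in X$ and moving $S''$ into the seed forces a \emph{unique} color on each $x \in X''$, and (b) explicitly discarding any tuple $Q$ for which the forced coloring of $G|(S' \cup X_0')$ fails to be proper. So the correct fix is not "a consistent color exists," but rather "there is at most one candidate color, and either it works or the precoloring is improper and the lemma's missing alternative applies." Apart from this caveat --- which really reflects a looseness in the paper as much as in your argument --- the proof is sound.
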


\begin{proof}
We need to check the following conditions:
\begin{enumerate}
\item $f': S' \cup X_0' \rightarrow \sset{1,2,3,4}$ is a proper coloring of $G|(S' \cup X_0')$;
\item $V(G) = S' \cup X_0' \cup X' \cup {Y^*}'$;
\item $G|S'$ is connected and no vertex in $V(G) \setminus S'$ is complete to 
$S'$; 
\item every vertex in $X'$ has neighbors of at least two different colors (with respect to $f'$) in $S'$;
\item no vertex in $X'$ is mixed on a component of $G|{Y^*}'$;  and
\item for every component of $G|{Y^*}'$, there is a vertex in $S' \cup X_0' \cup X'$ complete to it.
\end{enumerate}

Next we check the conditions.
\begin{enumerate}
\item  holds by the definition of $P'$. 
\item holds since  $S' \cup X_0' \cup X' \cup {Y^*}'=S \cup X_0 \cup X \cup Y^*$.
\item $G|S'$ is connected since $G|S$ is connected, and every $z \in S''$ has
a neighbor in  $S$. Moreover, since no vertex of $V(G) \setminus S$ is 
complete to $S$,
it follows that no vertex of $V(G) \setminus S'$ is complete to $S'$.
\item follows from the fact that $X' \subseteq X$.
\item follows from the fact that ${Y^*}' \subseteq Y^*$ and $X' \subseteq X$.
\item follows from the fact that  ${Y^*}' \subseteq Y^*$ and $S \cup X_0 \subseteq S' \cup X_0'$.
\end{enumerate}
\end{proof}

Let $P=\esspc$ be an excellent starred precoloring.
Let $i,j \in \{1,2,3,4\}$. Write $X_{ij}=\{x \in X \text{ such that }M_P(x)=\{i,j\}\}.$
For $y \in Y^*$ let $C_P(y)$ (or $C(y)$ when there is no danger of confusion) denote the vertex set of the component
of $G|Y^*$ that contains $y$. 

Let $P=\esspc$ be an excellent starred  precoloring, and let
$\{i,j,k,l\}= \{1,2,3,4\}$. We  say that $P$
is \emph{$kl$-clean} if there does not exist
$y \in Y^*$ with the following properties:
\begin{itemize}
\item  $i,j \in M_P(y)$, and 
\item there is $u \in C(y)$ with $k \in M_P(u)$, and 
\item  $y$ has both a neighbor in $X_{ik}$ and a neighbor in $X_{jk}$.
\end{itemize}
We say that
$P$ is \emph{clean} if it is $kl$-clean for every $k,l \in \{1,2,3,4\}$.

We say that $P$  is \emph{$kl$-tidy} if
there do not exist vertices $y_i,y_j \in Y^*$
such that 
\begin{itemize}
\item $i \in M_P(y_i)$, $j \in M_P(y_j)$, and 
\item $C(y_i)=C(y_j)$, and
\item there is a path $M$ from $y_i$ to $y_j$ in $C$ such that $l \in M_P(u)$
for every $u \in V(M)$, and
\item there is $u \in V(C)$ with  $k \in M_P(u)$, and
\item $y_i$ has a neighbor in $X_{ki}$ and a neighbor in $X_{kj}$
\end{itemize}
Observe that since no vertex of $X$ is mixed on an a component of $G|Y^*$,
it follows that $N(y_i) \cap X_{ki}$ is precisely the set of vertices of
$X_{ki}$ that are complete to $C(y_i)$, and an analogous statement holds for
$X_{kj}$.
We say that $P$ is  \emph{tidy} if it 
is $kl$-tidy for every $k,l \in \{1,2,3,4\}$.

We say that $P$ is \emph{$kl$-orderly} if  
for every  $y$ in  $Y^*$ with $\{i,j\} \subseteq M_P(y)$, 
$N(y) \cap X_{ik}$ is complete to  $N(y) \cap  X_{jk}$.
We say that $P$ is \emph{orderly} if it is $kl$-orderly  for every 
$k,l \in \{1,2,3,4\}$

Finally, we say that $P$ is \emph{$kl$-spotless} if no vertex  $y$ in  
$Y^*$ with $\{i,j\} \subseteq M_P(y)$ has
both a neighbor in $X_{ik}$ and a neighbor in $X_{jk}$.
We say that $P$ is \emph{spotless} if it is $kl$-spotless for every 
$k,l \in \{1,2,3,4\}$

Our goal is to replace an excellent starred precoloring by an 
equivalent  collection of spotless  precolorings. 
First we prove a lemma that allows us to replace an excellent starred 
precoloring with an equivalent collection of clean precolorings.

\begin{lemma}
  \label{clean} There is a function
  $q : \mathbb{N} \rightarrow \mathbb{N}$ such that the following
  holds.  Let $G$ be a $P_6$-free graph, and let $P=\esspc$ be an
  excellent starred precoloring of $G$. Then there is an 
algorithm
  with running time $O(|V(G)|^{q(|S|)})$ that outputs a collection
  $\mathcal{L}$ of excellent starred precolorings of $G$ such that:
\begin{itemize}
\item $|\mathcal{L}| \leq |V(G)|^{q(|S|)}$;
\item $|S'| \leq q(|S|)$ for every $P' \in \mathcal{L}$; 
\item every $P' \in \mathcal{L}$ is $kl$-clean for every $(k,l)$ for which
$P$ is $kl$-clean;
\item every $P' \in \mathcal{L}$ is $14$-clean;
\item $\mathcal{L}$ is an equivalent collection for $P$.
\end{itemize}
\end{lemma}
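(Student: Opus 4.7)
The plan is to build $\mathcal{L}$ iteratively, starting from $\mathcal{L} = \{P\}$, and to replace any non-$14$-clean precoloring in the current collection by a polynomial-size family of more constrained excellent starred precolorings produced via Lemma~\ref{movetoseed}. The refinements are guided by Lemma~\ref{Sophiespath}, which provides the structural control that both bounds the branching factor and certifies $14$-cleanness of the output. Preservation of $kl$-cleanness for the remaining pairs $(k,l) \neq (1,4)$ will come essentially for free, since the only operations used (moving vertices into $S$ or $X_0$) restrict color lists and add no edges.

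First, I would reformulate the violation. Because no vertex of $X$ is mixed on a component of $G|Y^*$, ``$y$ has a neighbor in $X_{12}$'' is equivalent to ``some $X_{12}$-vertex is complete to $C(y)$''. Hence $P'$ fails $14$-cleanness iff some component $C$ of $G|Y^*$ admits all four of: an $X_{12}$-vertex complete to $C$, an $X_{13}$-vertex complete to $C$, a vertex $u \in C$ with $1 \in M_{P'}(u)$, and a vertex $y \in C$ with $\{2,3\} \subseteq M_{P'}(y)$. Call such a $C$ a \emph{bad component}. The contrapositive of Lemma~\ref{Sophiespath} (specialized to $i=2$, $j=3$, $k=1$) says that if a bad $y$ with a given adjacency pattern to $X(T_i) \cup X(T_j)$ exists, then no Sophiespath witness-tuple $(x_i, x_i', x_j, x_j', y_i, y_j)$ of that shape exists.

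Next, I would design the branching. For each bad component $C$, enumerate a candidate pair of types $(T_i, T_j)$ with $L_{P'}(T_i) = \{2,1\}$ and $L_{P'}(T_j) = \{3,1\}$ (there are at most $2^{2|S|}$ such pairs), a candidate $y \in C$ with $\{2,3\} \subseteq M_{P'}(y)$, and candidate ``control'' vertices among $X(T_i) \cap N(C)$, $X(T_j) \cap N(C)$, and nearby vertices of $Y^*$. In each branch, move a bounded number of these vertices to $S$ (if drawn from $X$) or to $X_0$ (if drawn from $Y^*$) with carefully chosen colors via Lemma~\ref{movetoseed}. Coloring $y$ with $2$ (resp.\ $3$) forces every $X_{12}$-vertex (resp.\ $X_{13}$-vertex) complete to $C$ to be colored $1$, which eliminates the corresponding attachment of the shrunken component. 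In the remaining branches, the contrapositive of Sophiespath applied to the already-installed witnesses certifies that no bad $y$ with the installed adjacency pattern survives, and the residual bad $y$'s (which must escape every installed pattern) are heavily constrained and can be handled by moving them directly to $X_0$.

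The main obstacle is the accounting. We must ensure $|S'| \leq q(|S|)$ and $|\mathcal{L}| \leq |V(G)|^{q(|S|)}$ simultaneously, which rules out the naive approach of branching on one bad $y$ at a time (this would give an exponential blowup). The fix is that each ``round'' of seed installations kills an entire family of bad configurations indexed by a type pair, and the number of type pairs is bounded by a function of $|S|$; hence only $q(|S|)$ rounds are needed and each contributes $O(1)$ seed vertices. Finally, preservation of $kl$-cleanness for $(k,l) \neq (1,4)$ follows because any new violation of another cleanness property in the refined precoloring would also be a violation in $P'$ (the underlying graph and component structure of $G|Y^*$ only shrink under our operations), contradicting the inductive hypothesis.
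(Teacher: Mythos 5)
Your proposal takes a genuinely different route from the paper's proof, and as written it has a gap that I don't see how to close.

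The paper's proof is a single-shot branching over $m$-tuples indexed by \emph{types}, not type pairs: for each type $T_r$ with $L_P(T_r) \in \{\{1,2\},\{1,3\}\}$, the algorithm guesses either $(S_r,Q_r)=(\emptyset,\emptyset)$ or a pair $(\{x_r\},\{y_r\})$ with $x_r \in X(T_r) \cap N(y_r)$ and $y_r \in Y$, giving $|V(G)|^{2m}$ branches with $m \leq 2^{|S|}$. The equivalence argument then \emph{reconstructs} the correct branch from a hypothetical precoloring extension $c$ by choosing $y_r$ with $N(y_r) \cap X(T_r)$ \emph{minimal} among vertices of $Y$ having a color-$1$ neighbor in $X(T_r)$; this minimality is exactly what makes the $X_0$-assignments forced under $c$, and it is the missing ingredient in your sketch. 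Crucially, the paper \emph{never} moves a $Y^*$-vertex into $X_0$ in this lemma --- only vertices of $X(T_r)$, whose color is determined once color $1$ is excluded because $|M_P|=2$. Your proposal, by contrast, moves residual ``bad $y$'s'' directly to $X_0$. This is where the argument breaks: a vertex $y\in Y^*$ with $\{2,3\}\subseteq M_P(y)$ has at least two candidate colors, and you have no mechanism to determine which one a precoloring extension of $P$ assigns to it. Branching over all such $y$'s and all their colors is exactly the exponential blowup you correctly identified as a problem, and your fix (``kills an entire family of bad configurations indexed by a type pair'') asserts convergence without supplying the rule that makes the color assignments forced. (The later lemmas in the paper --- orderly and spotless --- do move $Y^*$-vertices to $X_0$ with color $4$, but that step is sound only \emph{because} cleanness has already been established and forces color $4$; it cannot be used to establish cleanness in the first place.)

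There is also a second, smaller issue: your iterative round structure, with rounds indexed by type pairs, is closer in spirit to Lemmas~\ref{tidy}, \ref{orderly}, and~\ref{spotless} than to Lemma~\ref{clean}. For the clean lemma the per-\emph{type} (not per-type-pair) branching is what is needed, because the contradiction via Lemma~\ref{Sophiespath} requires, for each of the two types appearing in a bad pair, a type-specific witness $(x_r, y_r)$ with the minimality property on $N(y_r)\cap X(T_r)$. Without that per-type minimal witness, the hypotheses of Lemma~\ref{Sophiespath} (in particular that $y_i$ is adjacent to $x_i'$ and not to $x_i$) do not fall out of the construction. If you restructure the branching to be per type, pick $y_r$ by minimality of $N(y_r)\cap X(T_r)$, and move only $X$-vertices, your argument converges to the paper's proof.
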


\begin{proof}
Without loss of generality we may assume that $X_0=X^0(P) \setminus S$.
Thus $L_P(x)=M_P(x)$ for every $x \in X$. 
We may assume that $P$ is not $14$-clean for otherwise we may set 
$\mathcal{L}=\{P\}$.
Let $Y$ be the set of vertices of $Y^*$ with $2,3 \in M_P(y)$
and such that some $u \in C(y)$ has $1 \in M_P(u)$.
Let $T_1, \ldots, T_p$ be the subsets of $S$ with $L_P(T_s)=\{1,2\}$ and
$T_{p+1}, \ldots, T_m$ the subsets of $S$ with $L_P(T_s)=\{1,3\}$.
Let $\mathcal{Q}$ be the collection of all $m$-tuples 
$$((S_1,Q_1), (S_2,Q_2), \ldots, (S_m,Q_m))$$
where for every $r \in \{1, \ldots, m\}$
\begin{itemize}
\item  $S_r \subseteq X(T_r)$ and $|S_r| \in \{0,1\}$, 
\item if $S_r =\emptyset$, then $Q_r=\emptyset$
\item $S_r=\{x_r\}$ then $Q_r=\{y\}$ where $y \in Y \cap N(x_r)$.
\end{itemize}

For $Q \in \mathcal{Q}$ construct a precoloring $P_Q$ as follows.
Let $r \in \{1, \ldots, m\}$. We may assume that $r \leq p$.
\begin{itemize}
\item Assume first   that $S_r=\{x_r\}$. Then $Q_r=\{y_r\}$. 
Move $\{x_r\}$ to the seed with color $1$, and for every $y \in Y$ such 
that $N(y) \cap X(T_r) \subset N(y_r) \cap X(T_r) \setminus \{x_r\}$,
move $N(y) \cap X(T_r)$ to $X_0$ with the unique color of 
$L_P(T_r) \setminus \{1\}$.
\item Next assume that $S_r=\emptyset$. Now for every $y \in Y$
move $N(y) \cap X(T_r)$ to $X_0$ with the unique color of 
$L_P(T_r) \setminus \{1\}$.
\end{itemize}
In the notation of Lemma~\ref{movetoseed}, if the precoloring of 
$G|(X_0' \cup S')$ thus obtained is not proper, 
remove $Q$ form $\mathcal{Q}$. Therefore we may assume that the precoloring is 
proper.
Repeatedly applying Lemma~\ref{movetoseed} we deduce that $P_Q$ is an 
excellent starred precoloring. Observe that ${Y^*}'=Y^*$.
Since $X' \subseteq X$ and ${Y^*}'=Y^*$, it follows that if $P$ is $kl$-clean, 
then so is $P_Q$. 

Now we show that $P_Q$ is $14$-clean. 
Let $Y'$ be the set of vertices $y$ of $Y^*$ such that
$2,3 \in M_{P_Q}(y)$ and some vertex $u \in C(y)$ has $1 \in M_{P_Q}(u)$.
Observe that $Y' \subseteq Y$.
It is enough to check that no vertex of $Y'$
has both a neighbor in $X'_{12}$ and a neighbor in $X'_{13}$. 
Suppose this is false, and suppose that  $y \in Y'$
has a neighbor $x_2 \in X'_{12}$ and a neighbor  
$x_3 \in X'_{13}$. 
Then  $x_2 \in X_{12}$ and $x_3 \in X_{13}$. We may assume that
$x_2 \in X(T_1)$ and $x_3 \in X(T_{p+1})$. Since $x_2,x_3 \not \in X^0(P_Q)$,
it follows that both $S_1 \neq \emptyset$ and $S_{p+1} \neq \emptyset$,
and therefore $Q_1 \neq \emptyset$ and $Q_{p+1} \neq \emptyset$.
Write $S_1=\{x_2'\}$, $Q_1=\{y_2\}$, $S_{p+1}=\{x_3'\}$ and $Q_{p+1}=\{y_3\}$. 
Since some $u \in C(y)$ has $1 \in M_{P_Q}(u)$, and since $x_2',x_3'$ are not mixed on $C(y)$, it follows that $y$ is anticomplete 
to  $\{x_2',x_3'\}$. Again since $x_2 \not \in X^0(P_Q)$, it follows that
$N(y) \cap X(T_1) \not \subseteq N(y_2) \cap X(T_1)$, and so we may assume that 
$x_2 \not \in N(y_2)$. Similarly, we may assume that $x_3 \not \in N(y_3)$.
But now the vertices $x_2,x_2',x_3,x_3',y_2,y_3,y$ contradict Lemma~\ref{Sophiespath}. This proves that $P_Q$ is $14$-clean.

Since $S'=S \cup \bigcup_{i=1}^mS_i$,
and since $m \leq 2^{|S|}$,  it follows that $|S'| \leq |S|+m \leq |S|+2^{|S|}$.

Let $\mathcal{L}=\{P_Q \; : \; Q \in \mathcal{Q}\}$.
Then  $|\mathcal{L}| \leq |V(G)|^{2m} \leq |V(G)|^{2^{|S|+1}}$.
We show that $\mathcal{L}$ is an equivalent collection for $P$. Since every 
$P' \in \mathcal{L}$ is obtained from $P$ by precoloring some vertices and 
updating,
it is clear that if $c$ is a precoloring extension of 
a member of $\mathcal{L}$, then $c$ is a precoloring extension of $P$. 
To see the converse, let $c$ be a precoloring 
extension of $P$. For every
$i \in \{1, \ldots, m\}$ define $S_i$ and $Q_i$ as follows. If no vertex
of $Y$ has a neighbor $x \in X(T_i)$ with $c(x)=1$, set 
$S_i=Q_i=\emptyset$. If some vertex of $Y$ has neighbor $x \in X(T_i)$
with $c(x)=1$, let $y$ be a vertex with this property and in addition with 
 $N(y) \cap X(T_i)$ minimal; let $x \in X(T_i) \cap N(y)$ with $c(x)=1$; 
and set $Q_i=\{y\}$ and $S_i=\{x\}$. Let $Q=((S_1,Q_1), \ldots, (S_m,Q_m))$.
We claim that $c$ is a precoloring extension of $P_Q$. Write
$P_Q=(G,S',X_0',X', Y',f')$. We need to show that
$c(v)=f'(v)$ for every $v \in S' \cup X_0'$. Since $c$ is a precoloring extension of $P$, it follows that $c(v)=f(v)=f'(v)$ for every $v \in S \cup X_0$.
Since $S' \setminus S = \bigcup_{s=1}^mS_s$ and $c(v)=f'(v)=1$ for
every $v \in  \bigcup_{s=1}^mS_s$, we deduce that $c(v)=f'(v)$ for every 
$v \in S'$. Finally let $v \in X_0' \setminus X_0$.
It follows that $v \in X$,
$f'(v)$ is the unique color of $M_P(v) \setminus \{1\}$, and there
are three possibilities.
\begin{enumerate}
\item $1 \in M_P(v)$ and $v$ has a neighbor in $\bigcup_{s=1}^mS_s$, or
\item there is $i \in \{1, \ldots, m\}$ with $S_i=\{x_i\}$ and $Q_i=\{y_i\}$,
and there is $y \in Y^*$ such that 
$N(y) \cap X(T_i) \subseteq (N(y_i) \cap X(T_i)) \setminus \{x_i\}$, and
$v \in N(y) \cap X(T_i)$, or 
\item there is $i \in \{1, \ldots, m\}$ with $S_i=Q_i=\emptyset$,
and there is $y \in Y^*$ such that  $v \in N(y) \cap X(T_i)$.
\end{enumerate}
We show that in all these cases $c(v)=f'(v)$.
\begin{enumerate}
\item Let $x \in  \bigcup_{s=1}^mS_s$. Then $c(x)=1$, and so 
$c(v) \neq 1$, and thus $c(v)=f'(v)$.
\item By the choice of $y_i$ and since 
$N(y) \cap X(T_i) \subseteq (N(y_i) \cap X(T_i)) \setminus \{x_i\}$, it follows
that $c(u) \neq 1$ for every  $u \in N(y) \cap X(T_i)$, and therefore
$c(v)=f'(v)$.
\item Since $S_i=\emptyset$, it follows that for every  $y' \in Y^*$ and 
for every $u \in N(y') \cap X(T_i)$ we have that $c(u) \neq 1$, and again
$c(v)=f'(v)$.
\end{enumerate}
This proves that $c$ is a precoloring extension of $P_Q$, and completes the 
proof of  Lemma~\ref{clean}.
\end{proof}

Repeatedly applying Lemma~\ref{clean} and using symmetry, we deduce the 
following:

\begin{lemma}
  \label{clean2}  There is a function
  $q : \mathbb{N} \rightarrow \mathbb{N}$ such that the following
  holds.  Let $G$ be a $P_6$-free graph.  Let $P=\esspc$ be an
  excellent starred precoloring of $G$. Then there is an 
algorithm
  with running time $O(|V(G)|^{q(|S|)})$ that outputs a collection
  $\mathcal{L}$ of excellent starred precolorings of $G$ such that:
\begin{itemize}
\item $|\mathcal{L}| \leq |V(G)|^{q(|S|)}$;
\item $|S'| \leq q(|S|)$ for every $P' \in \mathcal{L}$;
\item every $P' \in \mathcal{L}$ is clean;
\item $\mathcal{L}$ is an equivalent collection for $P$.
\end{itemize}
\end{lemma}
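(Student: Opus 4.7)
My plan is to apply Lemma~\ref{clean} iteratively, once for each of the twelve ordered pairs $(k,l)$ of distinct elements of $\sset{1,2,3,4}$, using symmetry of the color set to put the target pair in the role of $(1,4)$ in each call. A single application of Lemma~\ref{clean} yields an equivalent collection whose members are $14$-clean, and by its third bullet every $k'l'$-cleanness already enjoyed by the input is retained by each output precoloring. So running through all twelve pairs will accumulate every cleanness condition at once, producing a collection of \emph{clean} precolorings.

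Concretely, I would enumerate the pairs as $(k_1,l_1),\dots,(k_{12},l_{12})$ and, for each $t$, fix a permutation $\sigma_t$ of $\sset{1,2,3,4}$ sending $(k_t,l_t)$ to $(1,4)$. Set $\mathcal{L}_0=\sset{P}$. Given $\mathcal{L}_{t-1}$, for each $P'\in\mathcal{L}_{t-1}$ I would relabel colors by $\sigma_t$ to obtain $\sigma_t(P')$; since the definitions of \emph{excellent starred precoloring} and \emph{$kl$-clean} are invariant under permutations of $\sset{1,2,3,4}$, $\sigma_t(P')$ is excellent starred and is $\sigma_t(k_s)\sigma_t(l_s)$-clean whenever $P'$ is $k_sl_s$-clean. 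Apply Lemma~\ref{clean} to $\sigma_t(P')$, relabel each output by $\sigma_t^{-1}$, and collect the results into $\mathcal{L}_t$. By induction every member of $\mathcal{L}_t$ is $k_sl_s$-clean for all $s\leq t$, and $\mathcal{L}_t$ is equivalent to $P$ since equivalence composes across steps; output $\mathcal{L}=\mathcal{L}_{12}$.

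For the quantitative bounds, let $q_0$ denote the function supplied by Lemma~\ref{clean}. An easy induction on $t$ gives that every precoloring in $\mathcal{L}_t$ has seed of size at most $q_0^{(t)}(|S|)$, the $t$-fold iterate of $q_0$ evaluated at $|S|$, and that $|\mathcal{L}_t|\leq|\mathcal{L}_{t-1}|\cdot|V(G)|^{q_0(q_0^{(t-1)}(|S|))}\leq|V(G)|^{\sum_{s=1}^{t}q_0^{(s)}(|S|)}$. Taking $q(|S|):=12\,q_0^{(12)}(|S|)$ therefore delivers all three required size bounds simultaneously, and the running time is dominated by at most $|\mathcal{L}_{12}|$ applications of Lemma~\ref{clean}, each of cost $O(|V(G)|^{q_0(q_0^{(11)}(|S|))})$, so polynomial in $|V(G)|$ with exponent depending only on $|S|$.

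The only delicate step — and the one really invoked by the phrase ``using symmetry'' — is the invariance of all the relevant definitions (excellent starred precoloring, $L_P$, $M_P$, the sets $X_{ij}$, and $kl$-cleanness) under permutations of the color set, which is what legitimizes replacing an arbitrary target pair by $(1,4)$ before calling Lemma~\ref{clean}. This invariance is transparent from the statements but is the one point to verify carefully; everything else is bookkeeping.
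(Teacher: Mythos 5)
Your proof is correct and is exactly the argument the paper gestures at with ``repeatedly applying Lemma~\ref{clean} and using symmetry'': iterate through all twelve ordered pairs $(k,l)$, conjugate by a color permutation to reduce each target pair to $(1,4)$, and use the third bullet of Lemma~\ref{clean} to ensure previously attained cleanness conditions are retained. The one thing to make explicit in a polished write-up is that the function $q_0$ from Lemma~\ref{clean} may be taken monotone nondecreasing (replace it by $n \mapsto \max_{n' \le n} q_0(n')$), which is what justifies bounding $\sum_{s=1}^{12} q_0^{(s)}(|S|)$ by $12\,q_0^{(12)}(|S|)$ and hence the final choice of $q$.
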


Next we show that a clean precoloring can be replaced with an equivalent 
collection of precolorings that are both clean and tidy.

\begin{lemma}
\label{tidy} 
 There is a function $q : \mathbb{N} \rightarrow \mathbb{N}$ such that
the following holds.  Let $G$ be a $P_6$-free graph.  Let $P=\esspc$
be a clean excellent starred precoloring of $G$. Then there 
is an
algorithm with running time $O(|V(G)|^{q(|S|)})$ that outputs a
collection $\mathcal{L}$ of excellent starred precolorings of $G$ such
that:
\begin{itemize}
\item $|\mathcal{L}| \leq |V(G)|^{q(|S|)}$;
\item $|S'| \leq q(|S|)$ for every $P' \in \mathcal{L}$; 
\item every $P' \in \mathcal{L}$ is clean;
\item every $P' \in \mathcal{L}$ is $kl$-tidy for every $k,l$ for which
$P$ is $kl$-tidy;
\item every $P' \in \mathcal{L}$ is $14$-tidy;
\item $\mathcal{L}$ is an equivalent collection for $P$.
\end{itemize}
\end{lemma}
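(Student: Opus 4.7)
The plan is to follow the template of Lemma~\ref{clean}, using Lemma~\ref{Sophiespath2} (in place of Lemma~\ref{Sophiespath}) to certify the extra path-through-a-component structure demanded by $14$-tidyness. Because the transformation I will use only shrinks $X$ and leaves $Y^*$ unchanged, any $kl$-tidyness or $kl$-cleanness already present in the input is preserved; so it suffices to treat one target pair $(k,l)$ at a time, and by symmetry I focus on $(k,l)=(1,4)$, giving $\{i,j\}=\{2,3\}$. The full statement then follows by iterating over the finitely many pairs.

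Assume without loss of generality $X_0=X^0(P)\setminus S$. Let $T_1,\dots,T_p$ enumerate the subsets of $S$ with $L_P(T_s)=\{1,2\}$ and $T_{p+1},\dots,T_m$ those with $L_P(T_s)=\{1,3\}$, where $m\le 2^{|S|}$. Let $Y$ be the set of vertices $y\in Y^*$ that could occur as the ``$y_i$'' endpoint of a forbidden $14$-tidy configuration: $\{2,3\}\cap M_P(y)\neq\emptyset$, and $C(y)$ contains both a vertex $u$ with $1\in M_P(u)$ and a second vertex $y'$ joined to $y$ by a path through $\{u:4\in M_P(u)\}$. For each $m$-tuple $Q=((S_1,Q_1),\dots,(S_m,Q_m))$ with $S_r\subseteq X(T_r)$, $|S_r|\le 1$, and $Q_r=\{y_r\}\subseteq N(x_r)\cap Y$ whenever $S_r=\{x_r\}$ (and $Q_r=\emptyset$ otherwise), build $P_Q$ by Lemma~\ref{movetoseed}: move each $S_r$ into the seed with colour $1$ and, for every $y\in Y$ with $N(y)\cap X(T_r)\subsetneq N(y_r)\cap X(T_r)\setminus\{x_r\}$ (or every $y\in Y$ if $S_r=\emptyset$), move $N(y)\cap X(T_r)$ into $X_0$ with the unique colour of $L_P(T_r)\setminus\{1\}$; discard tuples producing an improper precoloring.

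The key verification is $14$-tidyness of each surviving $P_Q$. Given an alleged violation $(y^2,y^3)$ with $y^2$ adjacent to $x_2\in X'_{12}\cap X(T_{r_2})$ and $x_3\in X'_{13}\cap X(T_{r_3})$, neither $x_2$ nor $x_3$ lies in $X^0(P_Q)$, so both $S_{r_2}$ and $S_{r_3}$ are non-empty, yielding seed-witnesses $x_2',x_3'$ and companions $y_2',y_3'$; minimality in the selection of $y_r$ forces $x_2\notin N(y_2')$ and $x_3\notin N(y_3')$. The resulting data, together with paths through $\{u:4\in M_P(u)\}$ inside each of the three relevant components of $G|Y^*$, exactly meets the hypotheses of Lemma~\ref{Sophiespath2} with $(i,j,k,l)=(2,3,1,4)$, contradicting its conclusion. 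Preservation of prior $kl$-cleanness and $kl$-tidyness is immediate from $X'\subseteq X$ and ${Y^*}'=Y^*$. The bookkeeping $|S'|\le|S|+2^{|S|}$ and $|\mathcal L|\le|V(G)|^{2^{|S|+1}}$ is identical to Lemma~\ref{clean}, and equivalence of $\mathcal L$ with $P$ follows by the same three-case analysis on $v\in X_0'\setminus X_0$, after selecting from any precoloring extension $c$ of $P$ the tuple in which $y_r\in Y$ minimizes $N(y_r)\cap X(T_r)$ among those having a $T_r$-neighbour coloured $1$ by $c$.

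The principal obstacle is the alignment with Lemma~\ref{Sophiespath2}: the lemma demands full-component (not merely path) control of the colour $l$ at the excluded $y^i,y^j$, whereas the tidy condition only requires a path through $l$-vertices. The fix is to apply Lemma~\ref{Sophiespath2} with $C$ taken to be the component of the subgraph of $G|Y^*$ induced on $\{u:l\in M_P(u)\}$ rather than a component of $G|Y^*$ itself; the non-mixedness axiom (E) for $P$ still propagates adjacencies of $X$-vertices to whole components of $G|Y^*$, which is all the lemma's proof uses. Once this translation is fixed, the rest is mechanical transcription of the pattern of Lemma~\ref{clean}.
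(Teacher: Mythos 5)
Your proposal follows the paper's own strategy closely: enumerate tuples that guess, for each $\{1,2\}$-type $T_r$ and each $\{1,3\}$-type, a vertex $x_r$ to move to the seed with colour $1$ together with a witness $y_r$ from $Y$, push the resulting forced vertices into $X_0$, and then show $14$-tidyness of every surviving $P_Q$ by assembling a configuration that contradicts Lemma~\ref{Sophiespath2}, with preservation of prior cleanness and tidyness and the equivalence argument handled exactly as in Lemma~\ref{clean}. Using single vertices $y_r$ rather than the pairs $(y_2^r,y_3^r)$ the paper uses is harmless: a vertex of $Y$ together with the non-mixedness axiom determines its whole component and hence all the attachment sets $N(\cdot)\cap X(T_r)$ that the construction and the minimization depend on.

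Two details need repair. First, you move $N(y)\cap X(T_r)$ into $X_0$ when $N(y)\cap X(T_r)\subsetneq N(y_r)\cap X(T_r)\setminus\{x_r\}$, i.e.\ with a \emph{strict} subset. The verification requires non-strict containment: in the case $N(y^2)\cap X(T_{r_2})=N(y_{r_2})\cap X(T_{r_2})\setminus\{x_{r_2}\}$ nothing is moved under your rule, and then every $x\in N(y^2)\cap X(T_{r_2})$ already lies in $N(y_{r_2})$, so you cannot choose $x_2\notin N(y_{r_2})$, and the hypotheses of Lemma~\ref{Sophiespath2} are not met. The paper's ``$\subset$'' in this spot is non-strict (it is negated to $\not\subseteq$). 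Second, ``minimality in the selection of $y_r$ forces $x_2\notin N(y_2')$'' is a misattribution: within a tuple $Q\in\mathcal{Q}$ the witnesses $y_r$ are arbitrary, and minimality is used only in the equivalence direction when reconstructing a suitable $Q$ from a precoloring extension $c$. In the $14$-tidyness verification, $x_2\notin N(y_{r_2})$ (after possibly replacing $x_2$ by another non-neighbour of $y_{r_2}$ in $N(y^2)\cap X(T_{r_2})$) follows from the non-strict moving rule combined with the observation that $x_{r_2}$, having been given colour $1$, is anticomplete to $C(y^2)$ because some $u\in C(y^2)$ still has $1\in M_{P_Q}(u)$.

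On the alignment worry with Lemma~\ref{Sophiespath2}: you are right to flag that its conclusion, as stated, demands $l\in M_P(u)$ for \emph{all} $u$ in the component $C$ containing $y^i,y^j$, whereas a $14$-tidy violation of $P_Q$ only supplies a path of $l$-vertices between them. But the cleanest resolution is to observe that this hypothesis is never used in the proof of Lemma~\ref{Sophiespath2}: all the proof needs from $C$ is that $y^i,y^j$ lie in a single component of $G|Y^*$ (so that $V(C)$ is anticomplete to $V(C_i)\cup V(C_j)$) and that $l\in M_P(y^i)\cap M_P(y^j)$ (which the $4$-path gives you at the endpoints). So the lemma can simply be applied with the redundant bullet dropped. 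Your alternative of re-reading $C$ as a component of the subgraph induced on $\{u: l\in M_P(u)\}$ would also work, but it is a quiet change to the lemma's statement, and to justify it you would still have to pass to the ambient $G|Y^*$-component for the disjointness argument, so it buys nothing over simply noting the redundancy.
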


\begin{proof}
Without loss of generality we may assume that $X_0=X^0(P) \setminus S$,
and thus $L_P(x)=M_P(x)$ for every $x \in X$.
We may assume that $P$ is not $14$-tidy for otherwise we may set 
$\mathcal{L}=\{P\}$.  Let $Y$ be the set of all pairs 
 $(y_2,y_3)$ with $y_2, y_3 \in Y^*$
such that 
\begin{itemize}
\item $2 \in M_P(y_2)$, $3 \in M_P(y_3)$, 
\item $y_2,y_3$ are in the same component $C$ of $G|Y^*$,
\item there is a path $M$ from $y_2$ to $y_3$ in $C$ such that $4 \in M_P(u)$
for every $u \in V(M)$, and
\item for some $u \in V(C)$, $1 \in M_P(u)$,
\end{itemize}

Let $T_1, \ldots, T_p$ be the subsets of $S$  with $L_P(T_s)=\{1,2\}$ and let
$T_{p+1}, \ldots, T_m$ be the subsets of  $S$ with $L_P(T_s)=\{1,3\}$.
Let $\mathcal{Q}$ be the collection of all $m$-tuples 
$$((S_1,Q_1), (S_2,Q_2), \ldots, (S_m,Q_m))$$
where for $r \in \{1, \ldots, m\}$
\begin{itemize}
\item  $S_r \subseteq X(T_r)$ and $|S_r| \in \{0,1\}$, 
\item if $S_r =\emptyset$, then $Q_r=\emptyset$
\item $S_r=\{x_r\}$ then $Q_r=\{(y_2^r,y_3^r)\}$ where $(y_2^r,y_3^r) \in Y$
and $x_r$ is complete to $\{y_2^r,y_3^r\}$.
\end{itemize}

For $Q \in \mathcal{Q}$ construct a precoloring
$P_Q=(G^Q,S^Q,X_0^Q,X^Q,Y^Q, f^Q)$ as follows.  Let
$r \in \{1, \ldots, m\}$; for $r = 1, \dots, m$, we proceed as
follows.
\begin{itemize}
\item Assume first   that $S_r=\{x_r\}$. Then $Q_r=\{(y_2^r, y_3^r)\}$. 
Move $x_r$ to the seed with color $1$, and for every $(y_2,y_3) \in Y$ such 
that $N(y_2) \cap X(T_r) \subset N(y_2^r) \cap (X(T_r) \setminus \{x_r\})$,
move $N(y_2) \cap X(T_r)$ to $X_0$ with the unique color of $L_P(T_r) \setminus \{1\}$.
\item Next assume that $S_r=\emptyset$. Now for every $y \in Y$
move $N(y) \cap X(T_r)$ to $X_0$ with the unique color of 
$L_P(T_r) \setminus \{1\}$.
\end{itemize}
In the notation of Lemma~\ref{movetoseed}, if the precoloring of 
$G|(X_0' \cup S')$ thus obtained is not proper, 
remove $Q$ form $\mathcal{Q}$. Therefore we may assume that the precoloring is 
proper.
Repeatedly applying Lemma~\ref{movetoseed} we deduce that 
$P_Q$ is an excellent starred precoloring. 
Observe that $Y^Q=Y^*$, $M_{P_Q}(y) \subseteq M_P(y)$ for every $y \in Y^Q$,
and $M_{P_Q}(x)=M_P(x)$ for every $x \in X^Q \setminus X^0(P_Q)$.
It follows that $P_Q$ is clean, and that if $P$ is 
$kl$-tidy, then so is $P_Q$.

Now we show that $P_Q$ is $14$-tidy. Suppose that 
there exist  $y_2,y_3 \in Y^Q$ that violate the definition 
of being $14$-tidy. Let
$x_2 \in X^Q_{12}$ and  $x_3 \in X^Q_{13}$ be adjacent to $y_2$, say, and 
therefore complete to $\{y_2,y_3\}$.
We may assume that $x_2 \in X(T_1)$ and $x_3 \in X(T_{p+1})$. 
Since $x_2,x_3 \not \in X^0(P_Q)$,
it follows that both $S_1 \neq \emptyset$ and $S_{p+1} \neq \emptyset$,
and therefore $Q_1 \neq \emptyset$ and $Q_{p+1} \neq \emptyset$.
Write $S_1=\{x_2'\}$, $Q_1=\{(y_2^2,y_3^2)\}$, $S_{p+1}=\{x_3'\}$ and 
$Q_{p+1}=\{y_2^3,y_3^3\}$. 

Since there is a vertex $u$ in the component of $G|Y^Q$ containing  
$y_2,y_3$ with $1 \in M_{P_Q}(u)$, and since no vertex of $X$ is mixed on
a component of $Y^*$,
it follows that 
$\{y_2,y_3\}$ is  anticomplete to 
$\{x_2',x_3'\}$. Since $x_2 \not \in X^0(P_Q)$, it follows that
$N(y_2) \cap X(T_1) \not \subseteq N(y_2^2) \cap (X(T_1) \setminus \{x_2'\})$, 
and so we may assume 
that  $x_2 \not \in N(y_2^2)$. Similarly, we may assume that 
$x_3 \not \in N(y_2^3)$.
But now, since no vertex of $X$ is mixed on a component of $Y^*$,
we deduce that  the vertices $x_2,x_2',x_3,x_3',y_2^2,y_2^3,y_3^2,y_3^3,y_2,y_3$ 
contradict Lemma~\ref{Sophiespath2}. This proves that $P_Q$ is $14$-tidy.

Since $S'=S \cup \bigcup_{i=1}^mS_i$,
and since $m \leq 2^{|S|}$, 
it follows that $|S'| \leq |S|+m \leq |S|+2^{|S|}$.

Let $\mathcal{L}=\{P_Q \; : \; Q \in \mathcal{Q}\}$.
Then  $|\mathcal{L}| \leq |V(G)|^{3m} \leq |V(G)|^{3 \times 2^{|S|}}$.
We show that $\mathcal{L}$ is an equivalent collection for $P$. Since every 
$P' \in \mathcal{L}$ is obtained from $P$ by precoloring some vertices and 
updating,
it is clear that every precoloring extension of a member of $\mathcal{L}$ 
is a precoloring extension of $P$.
To see the converse, suppose that $P$ has a precoloring 
extension $c$. For every
$i \in \{1, \ldots, m\}$ define $S_i$ and $Q_i$ as follows. If there does
not exist $(y_2^2,y_2^3) \in Y$
such that some $x \in X(T_i)$ with $c(x)=1$ is complete to $\{y_2^2,y_2^3\}$, 
set  $S_i=Q_i=\emptyset$. If such a pair  exists, let
$(y_2^2,y_2^3)$ be a pair with this property and subject to 
that with the set $N(y_2^2) \cap X(T_i)$ minimal;
let $x \in X(T_i)$ be complete to $\{y_2^2,y_2^3\}$ 
and with  $c(x)=1$; and  set $Q_i=\{(y_2^2,y_2^3)\}$ and $S_i=\{x\}$.
Let $Q=((S_1,Q_1), \ldots, (S_m,Q_m))$.
We claim that $c$ is a precoloring extension of $P_Q$. Write
$P_Q=(G,S',X_0',X', Y',f')$. We need to show that
$c(v)=f'(v)$ for every $v \in S' \cup X_0'$. Since $c$ is a precoloring extension of $P$, it follows that $c(v)=f(v)=f'(v)$ for every $v \in S \cup X_0$.
Since $S' \setminus S = \bigcup_{s=1}^mS_s$ and $c(v)=f'(v)=1$ for
every $v \in  \bigcup_{s=1}^mS_s$, we deduce that $c(v)=f'(v)$ for every 
$v \in S'$. Finally let $v \in X_0' \setminus X_0$. Then $v \in X$,
$f'(v)$ is the unique color of $M_P(v) \setminus \{1\}$, and there
are three possibilities. 
\begin{enumerate}
\item $1 \in M_P(v)$ and $v$ has a neighbor in $\bigcup_{s=1}^mS_s$, or
\item there is $i \in \{1, \ldots, m\}$ with $S_i=\{x_i\}$ and $Q_i=\{(y_i^2,y_i^3)\}$, and there exists   $(y_2,y_3) \in Y$ 
such $N(y_2) \cap X(T_i) \subseteq X(T_i) \cap (N(y_i^2) \setminus \{x_i\})$, or
\item there is $i \in \{1, \ldots, m\}$ with $S_i=Q_i=\emptyset$,
and there exists $(y_2,y_3) \in Y$ such that $v \in X(T_i) \cap N(y_2)$.
\end{enumerate}
We show that in all these cases $c(v)=f'(v)$.
\begin{enumerate}
\item Let $x \in  \bigcup_{s=1}^mS_s$. Then $c(x)=1$, and so 
$c(v) \neq 1$, and thus $c(v)=f'(v)$.
\item By the choice of $y_i^2,y_i^3$ and since 
$N(y_2) \cap X(T_i) \subseteq (N(y_i^2) \cap X(T_i)) \setminus \{x_i\})$, it 
follows that $c(u) \neq 1$ for every  $u \in N(y_2) \cap X(T_i)$,
and  therefore $c(v)=f'(v)$.
\item Since $S_i=\emptyset$, it follows that for every  
$(y_2,y_3) \in Y$ and for every $u \in N(y_2) \cap X(T_i)$ we have
$c(u) \neq 1$,  and again $c(v)=f'(v)$.
\end{enumerate}
This proves that $c$ is an extension of $P_Q$, and completes the proof of 
Lemma~\ref{tidy}.
\end{proof}

Repeatedly applying Lemma~\ref{tidy} and using symmetry, we deduce the 
following:

\begin{lemma}
\label{tidy2} 
 There is a function $q : \mathbb{N} \rightarrow \mathbb{N}$ such that
the following holds.  Let $G$ be a $P_6$-free graph.  Let $P=\esspc$
be a clean excellent starred precoloring of $G$.  Then there 
is an
algorithm with running time $O(|V(G)|^{q(|S|)})$ that outputs a
collection $\mathcal{L}$ of excellent starred precolorings of $G$ such
that:
\begin{itemize}
\item $|\mathcal{L}| \leq |V(G)|^{q(|S|)}$;
\item $|S'| \leq q(|S|)$ for every $P' \in \mathcal{L}$; 
\item every $P' \in \mathcal{L}$ is clean and tidy;
\item $\mathcal{L}$ is an equivalent collection for $P$.
\end{itemize}
\end{lemma}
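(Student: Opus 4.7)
The plan is to iterate Lemma~\ref{tidy}, once per ordered pair $(k,l) \in \{1,2,3,4\}^2$. Since there are at most $16$ such pairs, a constant number $N$ of iterations will suffice to turn ``tidiness at a single pair'' into full tidiness.

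Concretely, I would fix an enumeration $(k_1,l_1), \ldots, (k_N,l_N)$ of these pairs, set $\mathcal{L}_0 = \{P\}$, and inductively build $\mathcal{L}_{t+1}$ as follows: for each $P' \in \mathcal{L}_t$, apply the version of Lemma~\ref{tidy} obtained by permuting $\{1,2,3,4\}$ so that the distinguished pair $(1,4)$ is relabelled to $(k_{t+1},l_{t+1})$, and let $\mathcal{L}_{t+1}$ be the union of the resulting collections. By the induction hypothesis together with the guarantees of Lemma~\ref{tidy}, each member of $\mathcal{L}_{t+1}$ is clean (cleanness is preserved under each application), $(k_i,l_i)$-tidy for every $i \leq t+1$ (previously established tidiness is preserved, and the pair $(k_{t+1},l_{t+1})$ is newly enforced), and $\mathcal{L}_{t+1}$ is equivalent to $\mathcal{L}_t$ and hence to $P$. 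After $N$ iterations every member of $\mathcal{L}_N$ is clean and tidy.

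For the quantitative bounds, let $q_0$ denote the function from Lemma~\ref{tidy} and $q_0^{(N)}$ its $N$-fold iterate. Each iteration bounds the new seed size by $q_0$ applied to the current seed size and multiplies the collection size by at most $|V(G)|^{q_0(\cdot)}$; after $N$ steps the seed size is at most $q_0^{(N)}(|S|)$ and $|\mathcal{L}_N| \leq |V(G)|^{N q_0^{(N)}(|S|)}$. Taking $q(n) := N q_0^{(N)}(n)$ (or any larger function) gives the single function of $|S|$ required by the statement, and the running-time estimate follows the same pattern, with each application running in time polynomial in $|V(G)|$ with exponent depending only on the current seed size.

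The main point to verify is that Lemma~\ref{tidy} really is color-symmetric in the form invoked above. This should be essentially formal: the notions of excellent starred precoloring, $M_P$, cleanness, and $kl$-tidiness all treat the four colors symmetrically, so any permutation $\pi$ of $\{1,2,3,4\}$ applied to $f$ (and correspondingly to $M_P$) induces a bijection of precolorings under which Lemma~\ref{tidy} for the pair $(1,4)$ transfers to the same lemma for the pair $(\pi(1),\pi(4))$. Spelling this out carefully is the only content beyond the bookkeeping above, and is precisely what the phrase ``using symmetry'' in the statement refers to.
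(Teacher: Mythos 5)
Your proposal is correct and is exactly what the paper does: the paper's entire proof of Lemma~\ref{tidy2} is the sentence ``Repeatedly applying Lemma~\ref{tidy} and using symmetry, we deduce the following,'' and you have simply spelled out the bookkeeping (iterating over all color pairs, preserving already-established $kl$-tidiness, composing the seed-size and collection-size bounds) that this sentence leaves implicit. No gap, same route.
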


Our next goal is to show that a clean and tidy precoloring can be replaced 
with an  equivalent collection of orderly precolorings.

\begin{lemma}
\label{orderly} 
There is a function $q : \mathbb{N} \rightarrow \mathbb{N}$ such that
the following holds.  Let $G$ be a $P_6$-free graph.  Let $P=\esspc$
be a clean, tidy starred precoloring of $G$.  Then there is an
algorithm with running time $O(|V(G)|^{q(|S|)})$ that outputs a
collection $\mathcal{L}$ of excellent starred precolorings of $G$ such
that:
\begin{itemize}
\item $|\mathcal{L}| \leq |V(G)|^{q(|S|)}$;
\item $|S'| \leq q(|S|)$ for every $P' \in \mathcal{L}$;
\item every $P' \in \mathcal{L}$ is clean and tidy;
\item every $P' \in \mathcal{L}$ is $kl$-orderly for every $(k,l)$ for which
$P$ is $kl$-orderly;
\item every $P' \in \mathcal{L}$ is $14$-orderly;
\item $P$ is equivalent to $\mathcal{L}$.
\end{itemize}
\end{lemma}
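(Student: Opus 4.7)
The proof follows the same scheme as Lemmas~\ref{clean} and~\ref{tidy}: we reduce to a polynomial-size equivalent collection by guessing, for each type $T_r \subseteq S$ with $L_P(T_r) \in \{\{1,2\}, \{1,3\}\}$, a small amount of information about how a putative extension $c$ colors $X(T_r)$ with color $1$. Without loss of generality we assume $X_0 = X^0(P) \setminus S$; if $P$ is already $14$-orderly we set $\mathcal{L} = \{P\}$. Otherwise, let $Y$ be the set of $y \in Y^*$ with $\{2,3\} \subseteq M_P(y)$ admitting non-adjacent $x_2 \in N(y) \cap X_{12}$ and $x_3 \in N(y) \cap X_{13}$, and let $T_1, \dots, T_p$ (resp.\ $T_{p+1}, \dots, T_m$) be the types with list $\{1,2\}$ (resp.\ $\{1,3\}$).

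Enumerate tuples $((S_1, Q_1), \dots, (S_m, Q_m))$ where each $S_r \subseteq X(T_r)$ has $|S_r| \leq 1$, and when $S_r = \{a_r\}$ the witness $Q_r = \{(y_r, b_r)\}$ consists of $y_r \in Y \cap N(a_r)$ together with a vertex $b_r \in N(y_r)$ in a type whose list is the complement of $L_P(T_r)$ with respect to $1$, such that $a_r b_r \notin E(G)$. For each tuple, move $a_r$ to the seed with color $1$ and apply a minimality-based moving rule (as in Lemma~\ref{tidy}) that relocates to $X_0$ those $x \in X(T_r)$ whose $Y$-neighborhood is dominated by that of $y_r$, coloring them with $L_P(T_r) \setminus \{1\}$; when $S_r = \emptyset$, move $N(Y) \cap X(T_r)$ into $X_0$ with the same color. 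Discard tuples yielding improper precolorings; by Lemma~\ref{movetoseed}, each surviving $P_Q$ is an excellent starred precoloring, and since $Y^Q = Y^*$ and the lists only shrink, $P_Q$ inherits cleanness, tidiness, and all previously satisfied $kl$-orderliness.

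The crux is to show each $P_Q$ is $14$-orderly. Suppose not: some $y \in Y^*$ with $\{2,3\} \subseteq M_{P_Q}(y)$ has a non-adjacent pair $x_2 \in N(y) \cap X^Q(T_r)$, $x_3 \in N(y) \cap X^Q(T_s)$ with $r \le p < s$. Since neither vertex was moved, both $S_r = \{a_r\}$ and $S_s = \{a_s\}$ are nonempty with witnesses $(y_r, b_r), (y_s, b_s)$, and the minimality of the moving rule forces $y_r$ non-adjacent to $x_2$ and $y_s$ non-adjacent to $x_3$ (up to replacing $x_2, x_3$ by other vertices of $N(y) \cap X^Q(T_r), N(y) \cap X^Q(T_s)$). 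The plan is then to invoke Lemma~\ref{Sophiespath} with $(i,j,k) = (2,3,1)$, $T_i = T_r, T_j = T_s$, $x_i = x_2, x_i' = a_r, x_j = x_3, x_j' = a_s$, $y_i = y_r, y_j = y_s$, and with the bad-configuration vertex $y$ playing the role of the forbidden extension; this produces a $P_6$ in $G$, the desired contradiction.

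The main obstacle will be verifying every hypothesis of Lemma~\ref{Sophiespath}, in particular the ``only possible edge among $\{x_i, x_i', x_j, x_j'\}$ is $x_i x_j$'' clause and the anticompleteness of $y$ to $\{a_r, a_s\}$. The auxiliary witness $b_r$ (non-adjacent to $a_r$ and of the complementary list type) is included precisely so that these non-adjacencies and the no-mixing-on-components property of $X$ can be leveraged; it may prove necessary to refine $Q_r$ further, or to fall back on Lemma~\ref{Sophiespath2}-style arguments with extra case analysis depending on which component of $G|Y^*$ contains $y$ relative to $y_r, y_s$. Equivalence with $P$ is then established exactly as in Lemmas~\ref{clean} and~\ref{tidy}: one direction is immediate, and conversely, given an extension $c$ of $P$, the tuple $(S_r, Q_r)$ is defined canonically from $c$ by picking, whenever possible, a minimal $y_r \in Y$ adjacent to some $x \in X(T_r)$ with $c(x) = 1$, and the usual three-case check confirms $c$ agrees with the precoloring on $S' \cup X_0'$. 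The bound $|\mathcal{L}| \leq |V(G)|^{q(|S|)}$ follows since $m \leq 2^{|S|}$ and each $(S_r, Q_r)$ ranges over $O(|V(G)|^3)$ choices, and $|S'| \leq |S| + m \leq q(|S|)$.
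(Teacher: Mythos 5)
Your proposal imports the guessing scheme from Lemmas~\ref{clean} and~\ref{tidy} (guess an $X$-vertex $a_r$ colored $1$, move it to the seed, then move dominated $X$-neighborhoods to $X_0$), but the paper's argument for Lemma~\ref{orderly} is structurally different: it guesses a vertex $y_i \in Y^*$ per relevant type, moves the \emph{entire} set $N(y_i) \cap X(T_i)$ to $X_0$ with color $1$ (after checking an anticompleteness condition between the two guessed neighborhoods), and---crucially---also moves offending vertices of $Y^*$ into $X_0$ with color $4$, using the hypothesis that $P$ is clean to justify that such $y$ must receive color $4$. In the paper's construction $S' = S$; nothing is added to the seed. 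Your construction adds $a_r$ to $S$ and never moves any vertex of $Y^*$ to $X_0$, so it lacks both the ``anticompleteness check'' that lets the paper kill the edge $x_2'x_3'$ by fiat and the color-$4$ assignment that cleanness licenses. The cleanness hypothesis is never invoked in your argument, which is a signal that something is missing, since the lemma genuinely needs it.

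The concrete gap is the one you yourself flag: you cannot establish that the bad vertex $y$ of $P_Q$ is anticomplete to $\{a_r, a_s\}$, and this is not a small verification issue. Because $a_r \in X(T_r)$ is now in the seed with color $1$, ``$y$ adjacent to $a_r$'' merely removes $1$ from $M_{P_Q}(y)$; it does not contradict $\{2,3\} \subseteq M_{P_Q}(y)$, and (unlike in Lemma~\ref{clean}) the orderliness definition does not supply any vertex of $C(y)$ that must retain color $1$ in its list, so there is no route to the anticompleteness via the no-mixing property. Saying ``it may prove necessary to refine $Q_r$ further, or to fall back on Lemma~\ref{Sophiespath2}-style arguments'' leaves the core of the lemma unproved. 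Relatedly, you never actually derive the final $P_6$; in the paper, after Lemma~\ref{Sophiespath} shows $y$ is adjacent to $x_3'$, a further argument using a vertex $s_2 \in S$ complete to $X(T_{p+1})$ is needed to exhibit the contradicting $P_6$, and that step has no analogue in your write-up. Finally, the witness $b_r$ you introduce never does any work in the argument you give; as far as I can tell it is a red herring. To repair this, you would essentially need to abandon the ``move an $X$-vertex into $S$'' idea and instead guess vertices of $Y^*$, move their $X$-neighborhoods to $X_0$ with color $1$ subject to an anticompleteness check, and move color-$4$-forced $Y^*$-vertices to $X_0$ (using cleanness)---which is the paper's proof.
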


\begin{proof}
Without loss of generality we may assume that $X_0=X^0(P)$,
and so $L_P(x)=M_P(x)$ for every $x \in X$.
We may assume that $P$ is not $14$-orderly for otherwise we may set 
$\mathcal{L}=\{P\}$.
Let $Y=\{y \in Y^* \text{ such that } \{2,3\} \subseteq M_P(y)\}$.
Let $T_1, \ldots, T_p$ be the types with $L(T_s)=\{1,2\}$ and
$T_{p+1}, \ldots, T_m$ the types with $L(T_s)=\{1,3\}$.
Let $\mathcal{Q}$ be the collection of all $p(m-p)$-tuples of pairs 
$(S_i,Q_j)$ with $i \in \{1, \ldots, p\}$ and
$j \in \{p+1, \ldots,m\}$, where  
\begin{itemize}
\item  $S_i,Q_j \subseteq Y$;
\item $|S_i|,|Q_j| \in \{0,1\}$;
\item if $N(S_i) \cap X(T_i) = \emptyset$, then $S_i=\emptyset$;
\item if $N(Q_j) \cap X(T_j)=\emptyset$, then $Q_j=\emptyset$.
\end{itemize}

For $Q \in \mathcal{Q}$ construct a precoloring $P_Q$ as follows.
Let $i \in \{1, \ldots, p\}$ and $j \in \{p+1, \ldots, m\}$.
\begin{itemize}
\item Assume first   that $S_i=\{y_i\}$ $Q_j=\{y_j\}$. 
If there is an edge between $N(y_i) \cap X(T_i)$ and 
$N(y_j) \cap X(T_j)$, remove $Q$ from $\mathcal{Q}$.
Now suppose that $N(y_i) \cap X(T_i)$ is anticomplete to 
$N(y_j) \cap X(T_j)$.
Move  $T=(N(y_i) \cap X(T_i)) \cup (N(y_j) \cap X(T_j))$
into $X_0$ with color $1$. For every $y \in Y$ complete to $T$ and both with a 
neighbor in $X(T_i) \setminus T$ and a neighbor in $X(T_j) \setminus T$, proceed
as follows: if $4 \in M_P(y)$, move $y$ to 
$X_0$ with color $4$; if $4 \not \in M_P(y)$, remove $Q$ from $\mathcal{Q}$.
\item Next assume that exactly one of $S_i,Q_j$ is non-empty.
By symmetry we may assume that $S_i=\{y_i\}$ and $Q_j=\emptyset$.
Move $T=N(y_i) \cap X(T_i)$  into $X_0$ with color  $1$. 
For every $y \in Y$ complete to $T$ and both with a neighbor
in $X(T_i) \setminus T$ and a neighbor in $X(T_j)$, proceed as follows:
if $4 \in M_P(y)$, move $y$ to $X_0$
with color $4$; if $4 \not \in M_P(y)$, remove $Q$ from $\mathcal{Q}$.
\item Finally assume that $S_i=Q_j=\emptyset$.
For every $y \in Y$ with both a neighbor in $X(T_i)$ and a neighbor in
$X(T_j)$, proceed as follows: if $4 \in M_P(y)$, move $y$ to $X_0$ with color 
$4$; if $4 \not \in M_P(y)$, remove $Q$ from $\mathcal{Q}$.
\end{itemize}
Let $Q \in \mathcal{Q}$, and let $P_Q=(G,S', X_0',X',{Y^*}',f')$.
Since $X' \subseteq X$, $Y' \subseteq Y^*$ and
$M_{P_Q}(v) \subseteq M_P(v)$ for every $v$,  it follows that $P_Q$
is excellent, clean,  tidy, and that for
$k,l \in \{1,2,3,4\}$, if $P$ is $kl$-orderly, then $P_Q$ is 
$kl$-orderly.

Next  we show that $P_Q$ is $14$-orderly. Suppose that some $y \in Y$
has a neighbor in $x_2 \in X'_{12}$ and a neighbor in 
$x_3 \in X'_{13}$ such that $x_2$ is non-adjacent to $x_3$. Then  
$x_2 \in X_{12}$ and $x_3 \in X_{13}$. We may assume that
$x_2 \in X(T_1)$ and $x_3 \in X(T_{p+1})$.  Since $x_2,x_3 \not \in X^0(P_Q)$,
it follows that both $S_1 \neq \emptyset$ and $Q_{p+1} \neq \emptyset$.
Let $S_1=\{y_2\}$ and $Q_{p+1}=\{y_3\}$. Since $x_2,x_3 \not \in X^0(P_Q)$,
it follows that $y_2$ is non-adjacent to $x_2$, and $y_3$ is non-adjacent to 
$x_3$.  Since $y \not \in X^0(P_Q)$, we may assume by symmetry that there is 
$x_2' \in N(y_2) \cap X(T_1)$ such that $y$ is non-adjacent to $x_2'$.  
Let $x_3' \in  N(y_3) \cap X(T_{p+1})$.
Since $x_2, x_3, y \not \in X^0(P_Q)$,
it follows that $\{x_2',x_3'\}$ is anticomplete to $\{x_2,x_3\}$. By the 
construction of $Q$, $x_2'$ is non-adjacent to $x_3'$. 
By Lemma~\ref{Sophiespath}, $y$ is 
adjacent to $x_3'$. Since $L_P(T_1)=\{1,2\}$, there is $s_3 \in S$ complete 
to  $X(T_1)$. Since  
$3 \in M_{P_Q}(y) \cap L_P(y_2) \cap L_P(y_3) \cap L_P(x_3') \cap L(x_3)$,
it follows that $s_3$ is anticomplete to $\{y,y_2,y_3,x_3,x_3'\}$. 
Similarly, since $L_P(T_{p+1})=\{1,3\}$, there is $s_2 \in S$ complete 
to  $X(T_{p+1})$. Since  
$2 \in M_{P_Q}(y) \cap L_P(y_2) \cap L_P(y_3) \cap L_P(x_2) \cap L_P(x_2')$,
it follows that $s_2$ is anticomplete to $\{y,y_2,y_3,x_2,x_2'\}$.
Since
$y_2-x_2'-s_3-x_2-y-t$ is not a $P_6$ for $t \in \{x_3,x_3'\}$, it follows that 
$y_2$ is complete to  $\{x_3,x_3'\}$. Since $y_3-x_3'-y-x_2-s_3-x_2'$ is not a $P_6$, it follows that $y_3$
is adjacent to at least one of $x_2,x_2'$. Since the path $x_2-y-x_3-y_2-x_2'$
cannot be extended to a $P_6$ via $y_3$, follows that $y_3$ is complete
to $\{x_2,x_2'\}$. But now $s_2-x_3-y-x_2-y_3-x_2'$ is a $P_6$, a contradiction.
This proves that $P_Q$ is $14$-orderly.

Observe that $S'=S$, and so $|S'|=|S|$.
Observe also that also that $p(m-p) \leq {(\frac{m}{2})}^2$, and 
since $m \leq 2^{|S|}$, it follows that $p(m-p) \leq  2^{2|S|-2}$.
Let $\mathcal{L}=\{P_Q \; : \; Q \in \mathcal{Q}\}$.
Now $|\mathcal{L}| \leq |V(G)|^{2p(m-p)} \leq |V(G)|^{2^{2|S|-1}}$.

We show that $\mathcal{L}$ is an equivalent collection for $P$. Since every 
$P' \in \mathcal{L}$ is obtained from $P$ by precoloring some vertices and 
updating, it is clear that if $c$ is a precoloring extension of a member of 
$\mathcal{L}$, then $c$ is a precoloring extension of $P$.
To see the converse, suppose that $P$ has a precoloring 
extension $c$.  For every
$i \in \{1, \ldots, p\}$ and $j \in \{p+1, \ldots, m\}$
 define $S_i$ and $Q_j$ as follows. If every vertex
of $Y$ has a neighbor $x \in X(T_i)$ with $c(x) \neq 1$, set 
$S_i=\emptyset$, and if every  vertex
of $Y$ has a neighbor $x \in X(T_j)$ with $c(x) \neq 1$, set 
$Q_j=\emptyset$.
 If some vertex of $Y$ has no neighbor $x \in X(T_i)$
with $c(x) \neq 1$, let $y_i$ be a vertex with this property and in addition 
with  $N(y) \cap X(T_i)$ maximal; set $S_i=\{y_i\}$.
If some vertex of $Y$ has no neighbor $x \in X(T_j)$
with $c(x) \neq 1$, let $y_j$ be  a vertex with this property and in addition 
with  $N(y) \cap X(T_j)$ maximal; set $Q_j=\{y_j\}$.
We claim that $c$ is a precoloring extension of $P_Q$. Write
$P_Q=(G,S',X'_0,X', Y',f')$. We need to show that
$c(v)=f'(v)$ for every $v \in S' \cup X_0'$. Since $c$ is a precoloring extension of $P$, and since $S=S'$, it follows that $c(v)=f(v)=f'(v)$ for every 
$v \in S' \cup X_0$. Let $v \in X_0' \setminus X_0$. 
It follows that either 
\begin{enumerate}
\item $S_i=\{y_i\}$, $Q_j=\{y_j\}$, and
$v \in X$ and $v \in (N(y_i) \cap X(T_i)) \cup (N(y_j) \cap X(T_j))$
and $f'(v)=1$, or
\item  $S_i=\{y_i\}$, $Q_j=\{y_j\}$, $v \in Y$, $v$ is complete
to $(N(y_i) \cap X(T_i)) \cup (N(y_j) \cap X(T_j))$,
$v$ has both a neighbor in $X(T_i) \setminus N(y_i)$
and a neighbor in $X(T_j) \setminus N(y_j)$, and $f'(v)=4$,
or
\item  (possibly with the roles of $i$ and $j$ exchanged)
$S_i=\{y_i\}$, $Q_j=\emptyset$, and
$v \in X$ and $v \in N(y_i) \cap X(T_i)$,
and $f'(v)=1$, or
\item  (possibly with the roles of $i$ and $j$ exchanged)
$S_i=\{y_i\}$, $Q_j=\emptyset$, $v \in Y$, $v$ is complete
to $N(y_i) \cap X(T_i)$,
$v$ has both a neighbor in $X(T_i) \setminus N(y_i)$
and a neighbor in $X(T_j)$, and $f'(v)=4$, or
\item $S_i=Q_j=\emptyset$, $v \in Y$, 
$v$ has both a neighbor in $X(T_i)$ and a neighbor in $X(T_j)$, and
$f'(v)=4$.
\end{enumerate}
We show that in all these cases $c(v)=f'(v)$.
\begin{enumerate}
\item By the choice of $y_i,y_j$, 
$c(u)=1$ for every  $u \in (N(y_i) \cap X(T_i)) \cup (N(y_j) \cap X(T_j))$,
and so $c(v)=f'(v)$.
\item It follows from the maximality of $y_i,y_j$ that $v$ has
both a neighbor $x_2 \in X(T_i)$ with $c(x_2)=2$ 
and a neighbor $x_3 \in X(T_j)$ with $c(x_3)=3$.
Since $P$ is clean, it follows that $1 \not \in M_P(v)$, and  therefore
$c(v)=4$.
\item By the choice of $y_i$, 
$c(u)=1$ for every  $u \in N(y_i) \cap X(T_i)$,  and so $c(v)=f'(v)$.
\item It follows from the maximality of $y_i$ that $v$ has 
a neighbor $x_2 \in X(T_i)$ with $c(x_2)=2$. Since $Q_j=\emptyset$, 
$v$ has a neighbor $x_3 \in X(T_j)$ with $c(x_3)=3$. Since $P$ is clean,
it follows that $1 \not \in M_P(v)$, and so  $c(v)=4$.
\item Since $S_i=Q_j=\emptyset$, it follows that 
$v$ has  both
a neighbor $x_2 \in X(T_i)$ with $c(x_2)=2$,  
and a neighbor $x_3 \in X(T_j)$ with $c(x_3)=3$.
Since $P$ is clean, it follows that $1 \not \in M_P(v)$, and so
$c(v)=4$.
\end{enumerate}
This proves that $c$ is an extension of $P_Q$, and completes the proof of 
Lemma~\ref{orderly}.
\end{proof}

Repeatedly applying Lemma~\ref{orderly} and using symmetry, we deduce the 
following:

\begin{lemma}
\label{orderly2} 
There  is a function $q : \mathbb{N} \rightarrow \mathbb{N}$ such that the following holds. 
Let $G$ be a $P_6$-free graph. 
Let $P=\esspc$ be a clean and tidy excellent starred precoloring of $G$.
Then there is an algorithm with running time $O(|V(G)|^{q(|S|)})$ that outputs a collection
  $\mathcal{L}$ of excellent starred  precolorings of $G$ such that:
\begin{itemize}
\item $|\mathcal{L}| \leq |V(G)|^{q(|S|)}$;
\item $|S'| \leq q(|S|)$ for every $P' \in \mathcal{L}$; 
\item every $P' \in \mathcal{L}$ is clean, tidy and orderly;
\item $P$ is equivalent to $\mathcal{L}$.
\end{itemize}
\end{lemma}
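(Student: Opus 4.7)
The plan is to obtain Lemma~\ref{orderly2} by iterating Lemma~\ref{orderly} over the twelve ordered pairs $(k,l)$ with $k,l \in \{1,2,3,4\}$ and $k \neq l$. Lemma~\ref{orderly} is stated for the distinguished pair $(k,l)=(1,4)$, but the statement and proof are completely symmetric under relabeling of the four colors: given any target pair $(k^*,l^*)$, a color permutation $\pi$ of $\{1,2,3,4\}$ with $\pi(k^*)=1$ and $\pi(l^*)=4$ turns a clean, tidy precoloring $P$ into a clean, tidy precoloring $\pi(P)$ whose $14$-orderliness, established by Lemma~\ref{orderly}, corresponds to $k^*l^*$-orderliness of $P$. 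Crucially, Lemma~\ref{orderly} also preserves $kl$-orderliness for every $(k,l)$ for which the input was already $kl$-orderly, and preserves cleanness and tidiness; so applying the symmetric version does not destroy any previously established $kl$-orderliness property.

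Enumerate the twelve ordered pairs as $(k_1,l_1),\ldots,(k_{12},l_{12})$ and set $\mathcal{L}_0 = \{P\}$. Inductively, for $t = 1,\ldots,12$, and for each $P' \in \mathcal{L}_{t-1}$, apply the $(k_t,l_t)$-symmetric version of Lemma~\ref{orderly} to $P'$ to produce an equivalent collection of clean, tidy precolorings that are $k_s l_s$-orderly for all $s \leq t$; let $\mathcal{L}_t$ be the union of these output collections, and set $\mathcal{L} = \mathcal{L}_{12}$. Then every member of $\mathcal{L}$ is clean, tidy, and orderly. Equivalence is preserved at each step by Lemma~\ref{orderly}, so $P$ is equivalent to $\mathcal{L}$ by transitivity of the equivalence relation (a precoloring extension of any member of $\mathcal{L}_t$ yields, in polynomial time, one of the $P'\in \mathcal{L}_{t-1}$ that produced it, and iterating gives an extension of $P$; conversely, any extension of $P$ lifts step-by-step through the construction in the proof of Lemma~\ref{orderly}).

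For the quantitative bounds, let $q_1$ denote the function produced by Lemma~\ref{orderly}. Each step multiplies the collection size by at most $|V(G)|^{q_1(s)}$ where $s$ is the maximum seed size in the current collection, and raises the maximum seed size from $s$ to at most $q_1(s)$. After the twelve iterations, the seed sizes are bounded by $q_1^{(12)}(|S|)$ (the $12$-fold composition of $q_1$ with itself), and the total collection size is bounded by $|V(G)|^{12\, q_1^{(12)}(|S|)}$. Defining $q(n)$ to be a sufficiently large polynomial-bounded function of $q_1^{(12)}(n)$ (for instance, $q(n)=12\,q_1^{(12)}(n)$) yields the desired bounds on $|\mathcal{L}|$ and $|S'|$. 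The total running time is bounded by the sum, over $t$, of $|\mathcal{L}_{t-1}|$ times the running time guaranteed by Lemma~\ref{orderly}, which is $O(|V(G)|^{q(|S|)})$ after adjusting $q$.

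The argument is essentially bookkeeping; the one point requiring a moment's care is the symmetry step, namely verifying that the proof of Lemma~\ref{orderly} is genuinely invariant under permutations of the four colors. This is immediate from the fact that the hypotheses (cleanness, tidiness) and the definitions of $kl$-clean, $kl$-tidy, $kl$-orderly are all stated symmetrically in the color roles, and the auxiliary lemmas (Lemma~\ref{Sophiespath} and Lemma~\ref{Sophiespath2}) used inside the proof are likewise symmetric. Consequently no genuine obstacle arises, and the proof reduces to the iteration outlined above.
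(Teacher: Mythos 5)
Your proposal matches the paper's own proof, which consists precisely of the one-line remark ``Repeatedly applying Lemma~\ref{orderly} and using symmetry.'' The iteration over the twelve ordered pairs $(k,l)$, the preservation of previously established $kl$-orderliness together with cleanness and tidiness at each step, and the composition of the quantitative bounds via $q_1^{(12)}$ are exactly the bookkeeping the authors have in mind.
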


Next we show that a clear, tidy and orderly excellent starred
precoloring can be replaced by an equivalent collection of spotless
precolorings.

\begin{lemma}
\label{spotless}
There is a function $q : \mathbb{N} \rightarrow \mathbb{N}$ such that
the following holds.  Let $G$ be a $P_6$-free graph.  Let $P=\esspc$
be a clean, tidy and orderly excellent starred precoloring of $G$.
Then there is an algorithm with running time $O(|V(G)|^{q(|S|)})$ that
outputs a collection $\mathcal{L}$ of excellent starred precolorings
of $G$ such that:
\begin{itemize}
\item $|\mathcal{L}| \leq |V(G)|^{q(|S|)}$;
\item $|S'| \leq q(|S|)$ for every $P' \in \mathcal{L}$; 
\item every $P' \in \mathcal{L}$ is clean, tidy and orderly;
\item every $P' \in \mathcal{L}$ is $kl$-spotless for every $(k,l)$ for which
$P$ is $kl$-spotless;
\item every $P' \in \mathcal{L}$ is $14$-spotless;
\item $P$ is equivalent to $\mathcal{L}$.
\end{itemize}
\end{lemma}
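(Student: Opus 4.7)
The plan is to follow the template established by Lemmas~\ref{clean}, \ref{tidy}, and \ref{orderly}. By the symmetry of $\{1,2,3,4\}$, it suffices to produce, from a clean, tidy, orderly precoloring $P$, an equivalent collection of precolorings each of which is additionally $14$-spotless and inherits every $(k,l)$-spotless property of $P$; iterating over the six ordered pairs $(k,l)$ then gives full spotlessness. We may assume that $P$ is not already $14$-spotless.

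I would set $Y = \{y \in Y^* : \{2,3\} \subseteq M_P(y)\}$, list the types $T_1,\dots,T_p$ with $L_P(T_s)=\{1,2\}$ and $T_{p+1},\dots,T_m$ with $L_P(T_s)=\{1,3\}$, and for each pair $(i,j)$ with $1\le i\le p<j\le m$ enumerate pairs $(S_i, Q_j)$ in which each of $S_i, Q_j$ is either empty or a singleton from $Y$. A non-empty $S_i = \{y_i\}$ is intended to represent, in a candidate extension $c$ of $P$, a vertex $y_i \in Y$ whose $X(T_i)$-neighborhood is entirely colored $1$ by $c$ and is inclusion-maximal with this property; $Q_j$ is analogous for $X(T_j)$. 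The total number of guesses is $|V(G)|^{O(p(m-p))} = |V(G)|^{O(2^{2|S|})}$, polynomial for fixed $|S|$.

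For each guess $Q$ I would construct $P_Q$ in direct analogy with the orderly construction: move $N(y_i)\cap X(T_i)$ and $N(y_j)\cap X(T_j)$ to $X_0$ with color $1$ (whenever the corresponding $S_i$ or $Q_j$ is non-empty), and then move to $X_0$ with color $4$ every $y\in Y$ that is complete to the moved set $T$ and has both a neighbor in $X(T_i)\setminus T$ and a neighbor in $X(T_j)\setminus T$, discarding the guess if this forces a color outside $M_P(y)$. Lemma~\ref{movetoseed} then shows each surviving $P_Q$ is an excellent starred precoloring; the inclusions $X^Q\subseteq X$, $Y^Q\subseteq Y^*$, and $M_{P_Q}\subseteq M_P$ imply that cleanness, tidiness, orderliness, and every $(k,l)$-spotless property of $P$ are preserved. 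Equivalence of $\mathcal{L}=\{P_Q\}$ with $P$ will be established by the canonical-guess recipe familiar from the previous lemmas: from an extension $c$ of $P$ define each $y_i$ and $y_j$ by the stated maximality condition, and verify by case analysis on the origin of each $v \in X_0' \setminus X_0$ that $c$ extends $P_Q$, using $14$-cleanness to force color $4$ on the $y$-vertices sent to $X_0$ with color $4$.

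The main obstacle is verifying that each $P_Q$ is $14$-spotless. Suppose some $y\in Y^Q$ has neighbors $x_2\in X'_{12}\cap X(T_i)$ and $x_3\in X'_{13}\cap X(T_j)$; since $P$ is $14$-orderly, $x_2x_3\in E(G)$. Because $x_2, x_3, y\notin X^0(P_Q)$, both $S_i=\{y_i\}$ and $Q_j=\{y_j\}$ must be non-empty with $x_2\notin N(y_i)$ and $x_3\notin N(y_j)$, and the processing forces $y$ to miss some $x_2'\in N(y_i)\cap X(T_i)$ or some $x_3'\in N(y_j)\cap X(T_j)$. The updating step at $x_2', x_3'$ (each colored $1$ in $P_Q$) gives $\{x_2', x_3'\}$ anticomplete to $\{x_2, x_3\}$ (otherwise $x_2$ or $x_3$ would have been pushed into $X^0(P_Q)$ by updating), so that the only possible edge among $\{x_2, x_2', x_3, x_3'\}$ is $x_2x_3$. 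Lemma~\ref{Sophiespath}, applied with $k=1$, $i=2$, $j=3$, and with $y_i, y_j$ from the guess, then denies the existence of a common neighbor $y$ of $\{x_2, x_3\}$ anticomplete to $\{x_2', x_3'\}$. The residual case in which $y$ is adjacent to $x_3'$ (or $x_2'$) will be handled by an auxiliary $P_6$-construction directly paralleling the one used at the end of the proof of Lemma~\ref{orderly}. This contradiction completes the plan.
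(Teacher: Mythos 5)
Your proposal tracks the paper's proof almost exactly up to the final contradiction, and the construction, enumeration, and equivalence argument are all correct and the same as the paper's. The one place where you deviate is precisely where you leave a gap: you say the residual case ``in which $y$ is adjacent to $x_3'$ (or $x_2'$) will be handled by an auxiliary $P_6$-construction directly paralleling the one used at the end of the proof of Lemma~\ref{orderly}.'' That construction is not needed here, and invoking it papers over an unfinished step. The extra hypothesis this lemma has over Lemma~\ref{orderly} is exactly that $P$ is already $14$-orderly, which kills the residual case outright: since $y \in Y$ (so $\{2,3\} \subseteq M_P(y)$) and $P$ is $14$-orderly, $N(y) \cap X_{12}$ is complete to $N(y) \cap X_{13}$; you already have $x_2 \in N(y) \cap X_{12}$ non-adjacent to $x_3'$, so $x_3' \notin N(y)$. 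Likewise $y$ is non-adjacent to $x_2'$ by the choice of $x_2'$. Hence $y$ really is a common neighbor of $\{x_2, x_3\}$ anticomplete to $\{x_2', x_3'\}$, and Lemma~\ref{Sophiespath} applies with no residual case at all. You noticed $14$-orderliness implies $x_2 x_3 \in E(G)$, which is a symptom of the same fact, but the load-bearing use of orderliness is the one you omitted.

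As it stands, your write-up does not constitute a complete proof because the ``auxiliary $P_6$-construction'' is asserted rather than given, and it is not clear that Lemma~\ref{orderly}'s construction transfers verbatim (in that proof $y$ is adjacent to $x_3$ but not initially to $x_3'$, and the $P_6$ is built out of seed vertices $s_2, s_3$ chosen for that configuration). Replace the last paragraph's final two sentences with the orderliness observation above and the proof is correct and matches the paper's.
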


\begin{proof}
The proof follows closely the proof of Lemma~\ref{orderly}, deviating from it 
only when we show that  every  $P' \in \mathcal{L}$ is $14$-spotless.
Without loss of generality we may assume that $X_0=X^0(P)$,
and so $L_P(x)=M_P(x)$ for every $x \in X$.
We may assume that $P$ is not $14$-spotless for otherwise we may set 
$\mathcal{L}=\{P\}$.
Let $Y=\{y \in Y^* \text{ such that } \{2,3\} \subseteq M_P(y)\}$.
Let $T_1, \ldots, T_p$ be the types with $L(T_s)=\{1,2\}$ and
$T_{p+1}, \ldots, T_m$ the types with $L(T_s)=\{1,3\}$.
Let $\mathcal{Q}$ be the collection of all $p(m-p)$-tuples 
$(P_i,Q_j)$ with $i \in \{1, \ldots, p\}$ and
$j \in \{p+1, \ldots,m\}$, where $S_i,Q_i \subseteq Y$ and 
$|P_i|,|Q_i| \in \{0,1\}$. 

For $Q \in \mathcal{Q}$ construct a precoloring $P_Q$ as follows.
Let $i \in \{1, \ldots, p\}$ and $j \in \{p+1, \ldots, m\}$.
\begin{itemize}
\item Assume first   that $S_i=\{y_i\}$ $Q_j=\{y_j\}$. 
If there is an edge between $N(y_i) \cap X(T_i)$ and 
$N(y_j) \cap X(T_j)$, remove $Q$ from $\mathcal{Q}$.
Now suppose that $N(y_i) \cap X(T_i)$ is anticomplete to 
$N(y_j) \cap X(T_j)$.
Move  $T=(N(y_i) \cap X(T_i)) \cup (N(y_j) \cap X(T_j))$
into $X_0$ with color 
$1$. For every $y \in Y$ complete to $T$ and both with a neighbor
in $X(T_i) \setminus T$ and a neighbor in $X(T_j) \setminus T$, proceed as
follows: if $4 \in M_P(y)$,  move $y$ to $X_0$ with color $4$; if 
$4 \not \in M_P(y)$, remove $Q$ from $\mathcal{Q}$.
\item Next assume that exactly one of $S_i,Q_j$ is non-empty.
By symmetry we may assume that $S_i=\{y_i\}$ and $Q_j=\emptyset$.
Move $T=N(y_i) \cap X(T_i)$  into $X_0$ with color  $1$. 
For every $y \in Y$ complete to $T$ and both with a neighbor
in $X(T_i) \setminus T$ and a neighbor in $X(T_j)$, proceed as follows. If
$4 \in M_P(y)$, move $y$ to $X_0$
with color $4$; if $4 \not \in M_P(y)$, remove $Q$ from $\mathcal{Q}$.
\item Finally assume that $S_i=S_j=\emptyset$.
For every $y \in Y$ with both a neighbor in $X(T_i)$ and a neighbor in
$X(T_j)$, proceed as follows: if $4 \in M_P(y)$, move $y$ to $X_0$ with color $4$; if $4 \not \in M_P(y)$, remove $Q$ from $\mathcal{Q}$.
\end{itemize}
Let $Q \in \mathcal{Q}$, and let $P_Q=(G,S', X_0',X_0,{Y^*}',f')$.
If $f'$ is not a proper coloring of $G|(S' \cup X_0')$, remove $Q$ from 
$\mathcal{Q}$.
Since $X' \subseteq X$, $Y' \subseteq Y^*$ and
$M_{P_Q}(v) \subseteq M_P(v)$ for every $v$,  it follows that $P_Q$
is excellent, clean, tidy and orderly, and that for
$k,l \in \{1,2,3,4\}$, if $P$ is $kl$-spotless, then $P_Q$ is 
$kl$-spotless.

Next  we show that $P_Q$ is $14$-spotless. Suppose that some $y \in Y$
has a neighbor in $x_2 \in X'_{12}$ and a neighbor in 
$x_3 \in X'_{13}$. Then  
$x_2 \in X_{12}$ and $x_3 \in X_{13}$. We may assume that
$x_2 \in X(T_1)$ and $x_3 \in X(T_{p+1})$.  Since $x_2,x_3 \not \in X^0(P_Q)$,
it follows that both $S_1 \neq \emptyset$ and $Q_{p+1} \neq \emptyset$.
Let $S_1=\{y_2\}$ and $Q_{p+1}=\{y_3\}$. Since $x_2,x_3 \not \in X^0(P_Q)$,
it follows that $y_2$ is non-adjacent to $x_2$, and $y_3$ is non-adjacent to 
$x_3$.  Since $y \not \in X^0(P_Q)$, we may assume by symmetry that there is 
$x_2' \in N(y_2) \cap X(T_1)$ such that $y$ is non-adjacent to $x_2'$.  
Let $x_3' \in  N(y_3) \cap X(T_{p+1})$.
Since $x_2, x_3 \not \in X^0(P_Q)$,
it follows that $\{x_2',x_3'\}$ is anticomplete to $\{x_2,x_3\}$. 
By the  construction of $Q$, $x_2'$ is non-adjacent to $x_3'$. 
Now, since $G$ is orderly, $y$ is non-adjacent to $x_3'$, 
contrary to Lemma~\ref{Sophiespath}.
This proves that $P_Q$ is $14$-spotless.

Observe that $S=S'$, and so $|S|=|S'|$.
Observe also that also that $p(m-p) \leq {(\frac{m}{2})}^2$, and 
since $m \leq 2^{|S|}$, it follows that $p(m-p) \leq  2^{2|S|-2}$.
Let $\mathcal{L}=\{P_Q \; : \; Q \in \mathcal{Q}\}$.
Now $|\mathcal{L}| \leq |V(G)|^{2p(m-p)} \leq |V(G)|^{2^{2|S|-1}}$.

The remainder of the proof follows word for word the proof of Lemma~\ref{orderly}, and we omit it.
This proves that $P_Q$ has a precoloring extension, and completes the proof of
Lemma~\ref{spotless}.
\end{proof}

Observe that if an excellent starred precoloring is spotless, then it is
clean and orderly.
Repeatedly applying Lemma~\ref{spotless} and using symmetry, we deduce the 
following:

\begin{lemma}
  \label{spotless2} There is a function
  $q : \mathbb{N} \rightarrow \mathbb{N}$ such that the following
  holds.  Let $G$ be a $P_6$-free graph.  Let $P=\esspc$ be a clean,
  tidy and orderly excellent starred precoloring of $G$.  Then there
  is an algorithm with running time $O(|V(G)|^{q(|S|)})$ that outputs
  a collection $\mathcal{L}$ of excellent starred precolorings of $G$
  such that:
\begin{itemize}
\item $|\mathcal{L}| \leq |V(G)|^{q(|S|)}$;
\item $|S'| \leq q(|S|)$ for every $P' \in \mathcal{L}$;
\item every $P' \in \mathcal{L}$ is tidy and spotless;
\item $P$ is equivalent to $\mathcal{L}$.
\end{itemize}
\end{lemma}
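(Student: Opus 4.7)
The plan is to iterate Lemma~\ref{spotless} once per ordered pair of distinct colors. The key observation is that the entire setup is symmetric under permutations of the color set $\{1,2,3,4\}$: neither the definition of an excellent starred precoloring, nor cleanness, tidiness, orderliness, spotlessness, nor the $P_6$-freeness of $G$ distinguishes between colors. Consequently, for every ordered pair $(k,l)$ of distinct elements of $\{1,2,3,4\}$, an exact analogue of Lemma~\ref{spotless} holds, obtained by relabeling colors so that $k$ and $l$ play the roles of $1$ and $4$, respectively, running the construction, and relabeling back. This analogue produces an equivalent collection whose members are $kl$-spotless, while retaining every $k'l'$-spotlessness already present in the input, and remaining clean, tidy, and orderly.

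Fix an arbitrary enumeration $(k_1,l_1),\ldots,(k_{12},l_{12})$ of the $12$ ordered pairs of distinct elements of $\{1,2,3,4\}$. Set $\mathcal{L}_0=\{P\}$. For $s=1,\ldots,12$, form $\mathcal{L}_s$ by applying the $(k_s,l_s)$-analogue of Lemma~\ref{spotless} to every $P'\in\mathcal{L}_{s-1}$ and taking the union of the resulting collections. The preservation clause of Lemma~\ref{spotless} guarantees, by induction on $s$, that every member of $\mathcal{L}_s$ is $(k_t,l_t)$-spotless for all $t\leq s$ and is still clean, tidy, and orderly. Thus each member of $\mathcal{L}_{12}$ is $kl$-spotless for every ordered pair, hence spotless, and is also tidy. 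As noted just before the lemma, spotless implies clean and orderly, so no further properties need to be tracked. Since equivalence of precolorings composes, $\mathcal{L}:=\mathcal{L}_{12}$ is equivalent to $P$.

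For the quantitative bounds, let $q_0$ be the function supplied by Lemma~\ref{spotless}. At each iteration, applied to a precoloring whose seed has size $n$, the new seeds have size at most $q_0(n)$, the input precoloring is replaced by at most $|V(G)|^{q_0(n)}$ new ones, and this is accomplished in time $O(|V(G)|^{q_0(n)})$. Iterating $12$ times, the final seed size is bounded by the twelvefold composition $q_0^{(12)}(|S|)$, and the final collection has size at most $|V(G)|^{12\, q_0^{(12)}(|S|)}$, with running time of the same order. Defining $q(n) := 12\, q_0^{(12)}(n)$ gives a single function of $|S|$ that bounds all three quantities, as required.

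There is no genuine obstacle in this proof: it is bookkeeping on top of Lemma~\ref{spotless}. The only point that needs to be stated carefully is the color-symmetry step that legitimizes applying Lemma~\ref{spotless} to the eleven pairs other than $(1,4)$, and this is immediate since none of the ingredients of the lemma refer to specific color labels.
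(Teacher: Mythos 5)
Your proof is correct and takes essentially the same route as the paper, which simply states ``Repeatedly applying Lemma~\ref{spotless} and using symmetry, we deduce the following'' without elaboration; you have filled in the iteration and bookkeeping details that the paper leaves implicit. (One tiny point you implicitly assume: for your bound $\sum_{s=1}^{12} q_0^{(s)}(|S|) \leq 12\, q_0^{(12)}(|S|)$ you need $q_0$ monotone non-decreasing, which can always be arranged by replacing $q_0$ with a pointwise-larger monotone function.)
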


We now summarize what we have proved so far. 
Let $P=\esspc$ be an excellent starred precoloring of a $P_6$-free graph 
$G$. We say that $y \in Y^*$ is 
\emph{wholesome} if $|M_P(y)| \geq 3$. A component of $G|Y^*$ if \emph{wholesome} if it contains a wholesome vertex.
We say that $P$ is \emph{near-orthogonal} if for every 
wholesome $y \in Y^*$ either 
\begin{itemize}
\item $y$ has orthogonal neighbors in $X$, or
\item there exist $\{i,j,k,l\}=\{1,2,3,4\}$ such that 
\begin{itemize}
\item $N(y) \cap X \subseteq X_{ki} \cup X_{kj}$, and 
\item For every $u \in C(y)$, $|M_P(u) \cap \{i,j\}| \leq 1$, and
\item   if there is $v_i \in C(y)$ with $i \in M_P(v_i)$ and 
$v_j \in C(y)$ with $j \in M_P(v_j)$, then for some $u \in C(y)$,
$l \not \in M_P(u)$.
\end{itemize}
\end{itemize}

\begin{lemma}
\label{near} There is a function $q : \mathbb{N} \rightarrow \mathbb{N}$ such that the following holds. 
Let $P=\esspc$ be an excellent starred  precoloring of a $P_6$-free graph 
$G$. Then there
  is an algorithm with running time $O(|V(G)|^{q(|S|)})$ that outputs a collection
$\mathcal{L}$ of excellent starred  precolorings of $G$ such that:
\begin{itemize}
\item $|\mathcal{L}| \leq |V(G)|^{q(|S|)}$;
\item $|S'| \leq q(|S|)$ for every $P' \in \mathcal{L}$; 
\item every $P' \in \mathcal{L}$ is near-orthogonal;
\item $P$ is equivalent to $\mathcal{L}$.
\end{itemize}
\end{lemma}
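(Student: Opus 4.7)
The plan is to apply Lemmas~\ref{clean2}, \ref{tidy2}, \ref{orderly2}, and~\ref{spotless2} in succession, obtaining an equivalent polynomial-size collection $\mathcal{L}$ of clean, tidy, orderly, and spotless excellent starred precolorings of $G$, and then to observe by case analysis that every such $P'$ is already near-orthogonal, so no further branching is required. The composition has $|\mathcal{L}| \leq |V(G)|^{q_0(|S|)}$ and every seed of size at most $q_0(|S|)$ for some function $q_0$, and runs in time $O(|V(G)|^{q_0(|S|)})$. We take this $\mathcal{L}$ as the output.

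To verify that every $P' \in \mathcal{L}$ is near-orthogonal, fix a wholesome $y \in Y^*$. Spotlessness of $P'$ forbids, for each $\{i,j\} \subseteq M_{P'}(y)$ and each $k \in \{1,2,3,4\} \setminus \{i,j\}$, the list types $X_{ik}$ and $X_{jk}$ from both being met by $N(y) \cap X$. If $|M_{P'}(y)| = 4$, every pair of list types sharing a color is forbidden, so the list types appearing in $N(y) \cap X$ are pairwise color-disjoint and hence orthogonal (first alternative). If $|M_{P'}(y)| = 3$, write $M_{P'}(y) = \{a,b,c\}$ and $d = \{1,2,3,4\} \setminus M_{P'}(y)$; the forbidden pairs form two vertex-disjoint triangles on $\{X_{ab}, X_{ac}, X_{bc}\}$ and $\{X_{ad}, X_{bd}, X_{cd}\}$, so $N(y) \cap X$ uses at most one list type from each. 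If the resulting pair is one of $\{X_{ab}, X_{cd}\}$, $\{X_{ac}, X_{bd}\}$, $\{X_{bc}, X_{ad}\}$, the first alternative holds; otherwise the two types share a color $k \in \{a,b,c\}$, determining $\{i,j,k,l\} = \{1,2,3,4\}$ with $j = d \notin M_{P'}(y)$ and $i, l \in M_{P'}(y)$, and the second alternative must be verified.

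The first condition $N(y) \cap X \subseteq X_{ki} \cup X_{kj}$ of the second alternative holds by construction. For condition~2, suppose some $u \in C(y)$ has $\{i,j\} \subseteq M_{P'}(u)$. Pick $x_i \in X_{ki} \cap N(y)$ and $x_j \in X_{kj} \cap N(y)$; by axiom~(E) each of $x_i, x_j$ is complete to $C(y)$, so both are adjacent to $u$. But then $u \in Y^*$ has $\{i,j\} \subseteq M_{P'}(u)$ together with neighbors in both $X_{ki}$ and $X_{kj}$, contradicting the $kl$-spotlessness of $P'$ applied to $u$. For condition~3, suppose $v_i, v_j \in C(y)$ satisfy $i \in M_{P'}(v_i)$, $j \in M_{P'}(v_j)$, and every $w \in C(y)$ satisfies $l \in M_{P'}(w)$; setting $y_i = y$ and $y_j = v_j$, any path in $C(y)$ from $y_i$ to $y_j$ has $l$ in every vertex's list, $k \in M_{P'}(y_i)$, and $y_i$ has neighbors in both $X_{ki}$ and $X_{kj}$, violating the $kl$-tidiness of $P'$.

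Thus $\mathcal{L}$ is the desired collection of near-orthogonal precolorings. The main place I anticipated difficulty was condition~2 of the second alternative, but it turns out to follow immediately from spotlessness applied to the putative $u \in C(y)$, once one observes (via axiom~(E)) that the witnesses $x_i, x_j \in N(y) \cap X$ are automatically complete to the entire component $C(y)$; all substantive branching has already been done in the earlier lemmas.
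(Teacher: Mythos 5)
Your proof is correct and follows essentially the same route as the paper: compose Lemmas~\ref{clean2}, \ref{tidy2}, \ref{orderly2}, and \ref{spotless2} to obtain tidy, spotless precolorings, then verify near-orthogonality for each wholesome $y$ by reading conditions~1 and~2 off spotlessness (using axiom~(E) to push $N(y)\cap X$ into $C(y)$) and condition~3 off tidiness. The only cosmetic differences are that your case analysis on $|M_{P'}(y)|$ and the two triangles of forbidden type-pairs is spelled out more explicitly than in the paper, and that in the tidiness invocation for condition~3 you take $y_i=y$ and $y_j=v_j$ whereas the paper takes $y_i=v_2$ and $y_j=v_3$ (both choices satisfy the hypotheses of the $kl$-tidiness definition, so either works).
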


\begin{proof}
Let $\mathcal{L}_1$ be the collection of precolorings obtained by
applying Lemma~\ref{clean2} to $P$.
Let $\mathcal{L}_2$ be the union of the collections of precolorings obtained by
applying Lemma~\ref{tidy2} to each member of $\mathcal{L}_1$.
Let $\mathcal{L}_3$ be the union of the collections of precolorings obtained by
applying Lemma~\ref{orderly2} to each member of $\mathcal{L}_2$.
Let $\mathcal{L}$ be the union of the collections of precolorings obtained by applying Lemma~\ref{spotless2} to each member of $\mathcal{L}_3$.
Then $\mathcal{L}$ satisfies the 
first, second and fourth bullet in the statement of Lemma~\ref{near}, and
every $P' \in \mathcal{L}$ is tidy  and spotless. 
Let $P' \in \mathcal{L}$, write $P'=(S',X_0',X', Y',f')$. 
Suppose that  $P'$ is not near-orthogonal. 
Let $y \in Y'$, and assume that the 
neighbors of $y$ are not orthogonal.
We show that $y$ satisfies  the conditions in the definition of near-orthogonal.
We may assume that $y$ has a neighbor in $X'_{12}$ and a neighbor in $X'_{13}$.
Since $P'$ is spotless, it follows that for every $u \in C(y)$, 
$|M_P(u) \cap \{2,3\}| \leq 1$. 
Since $y$ is wholesome, we may assume
that $M_P(y)=\{1,2,4\}$. Since $P'$ is spotless, it follows that
$N(y) \cap X' \subseteq X'_{12} \cup X'_{13}$.
Since $P'$ is tidy and $1 \in M_P(y)$, it follows 
that if there is $v_2 \in C(y)$ with $2 \in M_P(v_2)$ and 
$v_3 \in C(y)$ with $3 \in M_P(v_3)$, then for some $u \in C(y)$
$4 \not \in M_P(u)$. This proves that $y$ satisfies the conditions in
the definition of near orthogonal, and completes the proof of
Lemma~\ref{near}.
\end{proof}

Let $P=\esspc$ be an excellent  starred precoloring.
Let $\{i,j,k,l\}=\{1,2,3,4\}$, let $T^i$ be a type of $X$
with $L_P(T^i)=\{i,k\}$ and let $T^j$ be a type of $X$
with $L_P(T^j)=\{j,k\}$.
A \emph{type A extension with respect to $(T^i,T^j)$} is  a precoloring 
extension $c$ 
of $P$ such that there exists $y \in Y^*$ with $k,i \in M_P(y)$
and such that $y$ has a neighbor $x_i \in X(T^i)$ and a neighbor
$x_j \in X(T^j)$ with $c(x_i)=c(x_j)=k$.

Let $\mathcal{T}(P)$ be the set of all pairs $(T^i,T^j)$ of types of $X$ with 
$|L_P(T^j) \cap L_P(T^j)|=1$.
We say that $P$ is \emph{smooth}
if $P$ has a precoloring extension $c$ such that for every 
$(T^i,T^j) \in \mathcal{T}(P)$, $c$ is not of type A with respect to 
$(T^i,T^j)$. A precoloring extension of $P$ is \emph{good} if it is not of
type A for any $T \in \mathcal{T}(P)$.

We say that an excellent  starred precoloring $P'=(G,S',X_0',X',{Y^*}',f')$ is 
a \emph{refinement} of $P$ if for every  type $T'$  of $X'$,  there is a type 
$T$ of $X$ such that $X'(T') \subseteq X(T)$. 

\begin{lemma}
\label{smooththm}
There  is a function $q : \mathbb{N} \rightarrow \mathbb{N}$ 
such that the following holds.
Let $P=\esspc$ be a near-orthogonal   excellent starred  
precoloring of a $P_6$-free  graph $G$. There is an 
algorithm with running time $O(|V(G)|^{q(|S|)}$
that outputs a collection
$\mathcal{L}$ of near-orthogonal  excellent starred   precolorings of 
$G$  such that:
\begin{itemize}
\item $|\mathcal{L}| \leq |V(G)|^{q(|S|)}$;
\item $|S'| \leq q(|S|)$ for every $P' \in \mathcal{L}$;
\item a precoloring extension of a member of $\mathcal{L}$ is also a 
precoloring  extension of $P$;
\item if $P$ has a precoloring extension, then some $P' \in \mathcal{L}$ is 
smooth.
\end{itemize}
\end{lemma}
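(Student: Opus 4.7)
The plan is to iterate over the pairs $(T^i, T^j) \in \mathcal{T}(P)$---of which there are at most $2^{2|S|}$---and for each pair perform a branching step in the same spirit as Lemmas~\ref{clean}, \ref{tidy}, \ref{orderly}, and \ref{spotless}. For a pair $(T^i, T^j)$ with $L_P(T^i)=\{i,k\}$ and $L_P(T^j)=\{j,k\}$, I would branch as follows. Either (i) the null branch, corresponding to the guess that no type A witness for $(T^i,T^j)$ will appear in the sought extension, making no move for this pair; or (ii) for each $y_0 \in Y^*$ with $\{k,i\} \subseteq M_P(y_0)$, together possibly with a small amount of additional data such as a neighbor $x_i \in N(y_0) \cap X(T^i)$ and/or $x_j \in N(y_0) \cap X(T^j)$, move the chosen vertices into the seed or into $X_0$ via Lemma~\ref{movetoseed} with colors chosen so as to preclude a type A configuration at~$y_0$. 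Each individual branch is, by Lemma~\ref{movetoseed}, still an excellent starred precoloring.

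After processing all pairs I would re-apply Lemma~\ref{near} to restore near-orthogonality for every branch, at polynomial cost and with $|S'|$ still bounded by some $q(|S|)$. Since only constantly many (as a function of $|S|$) pairs need to be processed and each branching step produces $|V(G)|^{O(1)}$ options, the resulting collection $\mathcal{L}$ has size $|V(G)|^{q(|S|)}$ and the first three bullets follow immediately from the fact that each $P' \in \mathcal{L}$ is obtained from $P$ by a bounded sequence of the moves handled by Lemma~\ref{movetoseed}.

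For the fourth bullet, suppose $P$ has a precoloring extension $c$. For each pair $(T^i, T^j)$ I would choose the null branch if $c$ is not type A with respect to $(T^i,T^j)$, and otherwise the branch corresponding to a canonically chosen witness for $c$ (say, minimal under a fixed ordering on $Y^*$ and its attachments in $X$). To argue that the resulting $P'$ is smooth, I would invoke Lemmas~\ref{Sophiespath} and~\ref{Sophiespath2}: the near-orthogonality of $P$ together with the $P_6$-freeness of $G$ would force that once one canonical type A witness has been eliminated by the branching move, any other putative witness for the same pair would yield a $P_6$ in $G$, a contradiction. Combined across all pairs, this shows that some $P' \in \mathcal{L}$ has a good extension, i.e., is smooth.

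The main obstacle is the completeness argument. Unlike in the previous branching lemmas, where the colors of the moved vertices agreed with the given extension, here a type A witness $y$ is necessarily colored in $M_P(y) \setminus \{k\}$ under $c$, so we cannot simply promote $y$ to $X_0$ with color $k$ and retain $c$ as an extension. The branching must therefore extract enough structural data about $y$'s attachments in $X(T^i)$ and $X(T^j)$ to force some \emph{different}---and good---extension $c'$ of $P'$. Lemmas~\ref{Sophiespath} and~\ref{Sophiespath2} are precisely the technical tools designed to control how multiple type A witnesses interact, and they are what will let us get away with guessing only a bounded amount of data per pair.
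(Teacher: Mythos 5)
Your high-level framework — iterate over the $\leq 2^{2|S|}$ pairs $(T^i,T^j)$, branch on a guessed witness, move vertices via Lemma~\ref{movetoseed} — is the right shape, and the bounds on $|\mathcal{L}|$ and $|S'|$ would indeed follow. But the paragraph you flag as ``the main obstacle'' is where your proposal departs from a correct argument, and you do not actually resolve it.

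The point you miss is that there is no obstacle at all, because one does \emph{not} need to precolor the type-A witness $y$. The paper's construction fixes, for each pair $(T^i,T^j)$ with $L_P(T^i)=\{i,k\}$, $L_P(T^j)=\{j,k\}$, a triple $(\{y\},\{x_i\},\{x_j\})$ with $y\in Y^*$, $x_i \in X(T^i)\cap N(y)$, $x_j\in X(T^j)\cap N(y)$, $x_ix_j \notin E(G)$; it then moves $x_i$ and $x_j$ to the seed with the unique color $k \in L_P(T^i)\cap L_P(T^j)$. In the completeness argument one guesses $x_i, x_j$ with $c(x_i)=c(x_j)=k$, so the precoloring remains \emph{consistent} with $c$ and the same $c$ is an extension of $P_Q$ — no ``different extension $c'$'' needs to be constructed. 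Precoloring $x_i,x_j$ with $k$ indirectly kills the witness $y$ (since $k$ is now removed from $M_{P_Q}(y)$ by updating), and the extra bookkeeping (choosing $y$ with $\{x \in N(y)\cap X(T^j) : c(x)=k\}$ minimal) handles other candidate witnesses: if some $y'$ still gives a type-A configuration for a refinement of $(T^i,T^j)$ in $P_Q$, the minimality forces $y'$ to have a $k$-colored neighbor $x_j'\in X(T^j)\setminus N(y)$, and a direct construction produces the $P_6$ $x_i-y-x_j-s_i-x_j'-y'$, where $s_i\in S$ has $f(s_i)=i$ and is complete to $X(T^j)$ — a contradiction. Note this is a short self-contained $P_6$ argument; Lemmas~\ref{Sophiespath} and~\ref{Sophiespath2} play no role here, so citing them as ``precisely the technical tools designed to control how multiple type A witnesses interact'' is a misdirection. (Those two lemmas are used earlier, in the clean/tidy/orderly/spotless lemmas, whose purpose and branching structure are genuinely different: there the moved vertices are colored by their values under $c$, but for a different target property.)

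Two smaller points. First, the re-application of Lemma~\ref{near} at the end is unnecessary: since $X' \subseteq X$, $Y' \subseteq Y^*$, and $M_{P_Q}(v)\subseteq M_P(v)$ for all $v$, near-orthogonality is automatically preserved by the seed moves, and so is the refinement property that every new type is contained in an old type. Second, the third required bullet (that every extension of a member of $\mathcal{L}$ is an extension of $P$) is immediate in the paper's scheme because every $P_Q$ is obtained from $P$ by precoloring some vertices and updating; in your scheme, where you allow colors that ``preclude a type A configuration'' without tying them to $c$, you would need to verify this separately, and it is not clear that it holds for arbitrary such choices. In short, the gap is the central color-selection step: you should move the two $X$-side attachments $x_i,x_j$ to the seed with the common color $k$ (keeping $c$ intact), not try to engineer a new good extension after a move that breaks $c$.
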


\begin{proof}
Let $\mathcal{T}(P)=\{(T_1,T_1'), \ldots, (T_t,T_t')\}$.
Let $\mathcal{Q}$ be the collection of $t$-tuples of 
triples $Q_{T_i,T_i'}=(Y_{T_i,T_i'}, A_{T_i,T_i'}, B_{T_i,T_i'})$ such that
\begin{itemize}
\item $|Y_{T_i,T_i'}|= |A_{T_i,T_i'}|=|B_{T_i,T_i'}| \leq 1$.
\item $A_{T_i,T_i'} \subseteq X(T_i)$.
\item $B_{T_i,T_i'} \subseteq X(T_i')$.
\item $Y_{T_i,T_i'} \subseteq Y^*$ and if $Y_{T_i,T_i'}=\{y\}$, then 
$L_P(T_i) \subseteq M_P(y)$.
\item $Y_{T_i,T_i'}$ is complete to $A_{T_i,T_i'} \cup B_{T_i,T_i'}$.
\item $A_{T_i,T_i'}$ is anticomplete to $B_{T_i,T_i'}$.
\end{itemize}
For $Q=(Q_{T_i,T_i'})_{(T_i,T_i') \in \mathcal{T}(P)} \in \mathcal{Q}$, 
we construct a precoloring $P_Q$ by moving
$A_{T_i,T_i'} \cup B_{T_i,T_i'}$  to the seed with the unique color
of $L_P(T_i) \cap L_P(T_i')$ for all $(T_i,T_i') \in \mathcal{T}(P)$.
Let $P_Q=(G,S',X_0',X',Y',f')$. 
Since $X' \subseteq X$ and $Y' \subseteq Y^*$, and
$M_{P'}(v) \subseteq M_P(v)$ for every $v \in V(G)$,
it follows that $P_Q$
is excellent, near-orthogonal  and  for every type $T'$ of $X'$,
there is a type $T$ of $X$ such that  $X'(T') \subseteq X(T)$.

Let $\mathcal{L}=\{P\} \cup \{P_Q \; : \; Q \in \mathcal{Q}\}$.
Observe  that there are at most 
$2^{|S|}$ types, and therefore $t \leq  2^{2|S|}$.
Now  $|S'| \leq |S|+2t \leq |S|+2^{2|S|+1}$ and
$|\mathcal{L}| \leq |V(G)|^{3t} \leq |V(G)|^{3 \times 2^{2|S|}}$.

Since every member of $\mathcal{L}$ is obtained from $P$ by precoloring
some vertices and updating, it follows that  every precoloring extension of a 
member of $\mathcal{L}$ is also a precoloring extension of $P$. 

Now we prove the last assertion of Lemma~\ref{smooththm}.
Suppose that $P$ has a precoloring extension. 
We need to show that some $P' \in \mathcal{L}$ is smooth. 
Let $c$ be a precoloring extension of $P$.
For every $(T_i,T_i') \in \mathcal{T}(P)$ such that $c$ is 
of type A with respect to  $(T_i,T_i')$, proceed as follows.
We may assume that $L_P(T_i)=\{1,2\}$ and $L_P(T_i')=\{1,3\}$.
Let  $y \in Y^*$ with $1,2 \in M_P(y)$, $x_2 \in X(T_i)$ and
$x_3 \in X(T_i')$ such that $y$ is adjacent to $x_2,x_3$ and  $c(x_2)=c(x_3)=1$,
and subject to the existence of such $x_2,x_3$, choose $y$ with the set
$\{x \in N(y) \cap X(T_i') \text{ such that } c(x)=1\}$ minimal.
Let $Q_{T_i,T_i'}=(\{y\},\{x_2\},\{x_3\})$. 
For every $(T_i,T_i') \in \mathcal{T}(P)$ such that $c$ is not
of type A with respect to  $(T_i,T_i')$, set
$Q_{T_i,T_i'}=(\emptyset, \emptyset, \emptyset)$. 
Let $Q=(Q_{T_i,T_i'})_{(T_i,T_i') \in \mathcal{P}}$; then $P_Q \in \mathcal{L}$.

We claim that
$c$ is a precoloring extension of $P_Q$ that is not of type A for any
$(T_i,T_i') \in \mathcal{T}(P_Q)$.
Write $P_Q=(G,S',X_0',X',Y',f')$. 
Let $\{i,j,k,l\}=\{1,2,3,4\}$.
Suppose that  $T^i$ is a type of $X'$ with $L_{P_Q}(T^i)=\{i,k\}$ 
and  $T^j$ is a type of $X'$ with $L_{P_Q}(T^j)=\{j,k\}$,  and such that  
$(T^i,T^j) \in \mathcal{T}(P_Q)$,
and $y' \in Y'$ with $i,k \in M_{P_Q}(y')$ has neighbor
$x_i' \in X'(T^i)$ and $x_j' \in X'(T^j)$ with $c(x_i')=c(x_j')=k$.
Let $(\tilde{T^i},\tilde{T^j}) \in \mathcal{T}(P)$ be 
such that $X'(T^i) \subseteq X(\tilde{T^i})$ and
$X'(T^j) \subseteq X(\tilde{T^j})$.
Since $i,k  \in M_P(y)$, it follows that $c$ is of type A for 
$(\tilde{T^i},\tilde{T^j})$, and therefore
$|Y_{\tilde{T^i},\tilde{T^j}}|=|A_{\tilde{T^i},\tilde{T^j}}|=|B_{\tilde{T^i},\tilde{T^j}}|=1$.
Let 
$Y_{\tilde{T^i},\tilde{T^j}}=\{y\}$
$A_{\tilde{T^i},\tilde{T^j}}=\{x_i\}$
and $B_{\tilde{T^i},\tilde{T^j}}=\{x_j\}$.
Since $k  \in M_{P_Q}(y')$ it follows that $y'$ is anticomplete to 
$\{x_i,x_j\}$.
By the choice of $y$,  it follows that $y'$ has a neighbor 
$x' \in X(\tilde{T^j}) \setminus N(y)$ with $c(x')=k$, and so we may assume that
$x_j'$ is non-adjacent to $y$.
Since $L_P(T_i')=\{j,k\}$ there exists $s_i \in S$ with 
$f(s_i)=i$ such that $s_i$ is complete to $\{x_j,x_j'\}$. 
Since $i \in L_P(x_i) \cap L_P(y') \cap L_P(y)$, it follows that
$s_i$ is anticomplete to $\{x_i,y',y\}$. Since $c(x_i)=c(x_i')=c(x_j)=c(x_j')$,
it follows that $\{x_i,x_i',x_j,x_j'\}$ is a stable set. 
But now $x_i-y-x_j-s_i-x_j'-y'$ is a $P_6$ in $G$, a contradiction.
This proves that $c$ is a good precoloring extension of $P_Q$, 
and completes the proof of Lemma~\ref{smooththm}.
\end{proof}

We are finally ready to construct orthogonal precolorings.

\begin{lemma}
\label{orthogonal}
There is a function
  $q : \mathbb{N} \rightarrow \mathbb{N}$ such that the following
  holds.
Let $P=\esspc$ be a near-orthogonal  excellent precoloring of a $P_6$-free 
graph $G$.  There exist an induced  subgraph $G'$ of $G$ and an orthogonal 
excellent starred precoloring 
$P'=(G',S',X_0',X',Y',f')$ of $G'$, such that 
\begin{itemize}
\item $S=S'$,
\item  if $P$ is smooth, then $P'$ has a precoloring extension, and  
\item  if $c$ is a precoloring extension of $P'$, then a precoloring extension of $P$ can be constructed from $c$ in polynomial time.
\end{itemize}
Moreover, $P'$ can be constructed in time $O(|V(G)|^{q(|S|)})$.
\end{lemma}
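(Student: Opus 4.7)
My plan is to construct $P'$ from $P$ by deleting certain vertices of $G$ and moving selected vertices of $X \cup Y^*$ into the precolored set $X_0'$. The construction processes each $y \in Y^*$ for which $N(y) \cap X$ fails to be orthogonal. By near-orthogonality, any wholesome such $y$ falls into case 2 of the definition, with parameters $\{i,j,k,l\}=\{1,2,3,4\}$ and one of three subcases, determined by whether $C(y)$ contains a vertex with $i$, or with $j$, in its list, and whether such vertices coexist in $C(y)$.

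For wholesome $y$ in subcase II (so $i \notin M_P(u)$ for every $u \in C(y)$, which in particular forces $i \notin M_P(y)$ and $M_P(y)=\{k,j,l\}$), I would move $y$ to $X_0'$ with color $k$, and correspondingly move $N(y) \cap X_{ki}$ and $N(y) \cap X_{kj}$ to $X_0'$ with colors $i$ and $j$; wherever this forcing is inconsistent (e.g.\ adjacent vertices inside $N(y) \cap X_{ki}$), the offending $X$-vertices are deleted from $G$ instead. Subcase III is symmetric, and subcase I exploits the guaranteed $u \in C(y)$ with $l \notin M_P(u)$ to justify an analogous commitment. Smoothness of $P$ — via the dichotomy that in any good extension either all of $N(y) \cap X_{ki}$ receive color $i$ or all of $N(y) \cap X_{kj}$ receive color $j$ — combined with Lemma~\ref{Sophiespath}, and in the harder subcase I with Lemma~\ref{Sophiespath2}, is what guarantees that some good extension of $P$ respects our commitments and hence restricts to an extension of $P'$. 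Non-wholesome $y \in Y^*$ whose $X$-neighbors are non-orthogonal are reduced to orthogonal form using the spotless inequalities together with any commitments already made in the wholesome step.

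After processing, I would verify that $P'$ satisfies the six axioms (A)--(F) of an excellent starred precoloring, using $S'=S$, $X' \subseteq X$, $Y' \subseteq Y^*$, and the mutual consistency of the committed colors, and I would check that every $y \in Y'$ has orthogonal $N(y) \cap X'$ by construction. To lift an extension $c'$ of $P'$ to a precoloring extension of $P$, each deleted vertex $v \in V(G) \setminus V(G')$ is assigned a color from $M_P(v) \setminus \{c'(u) : u \in N(v) \cap V(G')\}$; nonemptiness of this set follows from the local case analysis that justified deleting $v$ in the first place, and the whole procedure runs in polynomial time.

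The main obstacle will be showing that the many commitments made by the construction are simultaneously realizable by a \emph{single} good extension of $P$ — i.e.\ that the canonical choice in each subcase can be made uniformly across all processed $y$'s without any branching — and in parallel that deleted vertices can always be recolored when lifting. This is precisely where $P_6$-freeness of $G$, the dichotomy forced by smoothness, and the delicate structural conditions on $C(y)$ in case 2 of near-orthogonality all have to work together.
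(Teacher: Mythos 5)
Your proposal diverges substantially from the paper's construction, and the deviations introduce genuine gaps.

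The central problem is over-commitment. In subcase II you move the vertex $y$ itself into $X_0'$ with color $k$, \emph{and} simultaneously fix $N(y)\cap X_{ki}\mapsto i$ and $N(y)\cap X_{kj}\mapsto j$. Smoothness only tells you that a good extension $c$ does not give both some $N(y)\cap X_{ki}$-vertex and some $N(y)\cap X_{kj}$-vertex the color $k$; it yields a dichotomy (one of the two sides is monochromatic in its non-$k$ color), but it does not force $c(y)=k$, nor does it force both sides at once. A good extension may well have $c(y)=j$ or $c(y)=l$, and may give color $k$ to one whole side. Since the ``if $P$ is smooth then $P'$ has a precoloring extension'' direction requires some good extension of $P$ to restrict to an extension of your $P'$, this is a real failure, not just a technicality. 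The paper is far more conservative: it never precolors any $Y^*$-vertex, and it fixes only \emph{one} side of the dichotomy (e.g.\ move $N(y')\cap X_{12}$ to $X_0$ with color $2$ when some $y'\in V(C)$ has $M_P(y')=\{1,3\}$, or $N(z)\cap X_{13}$ with color $3$ in the other case), and even that commitment is verified against a carefully chosen type-A instance anchored at the wholesome vertex $z$ of the component, not at $y'$.

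Second, your deletion of $X$-vertices cannot be repaired by the proposed lift. When you delete $v\in X_{ki}$ because, say, it has an edge inside $N(y)\cap X_{ki}$, the set $M_P(v)\setminus\{c'(u):u\in N(v)\cap V(G')\}$ can be empty: $|M_P(v)|=2$, and $v$ can easily have a $k$-colored neighbor and an $i$-colored neighbor in $V(G')$. The paper never deletes $X$-vertices. The only vertices it removes from $G$ are single-vertex components $\{y\}$ of $G|Y^*$ that possess a guaranteed \emph{free color} $l$, where near-orthogonality ensures $l\notin L_P(u)$ for every $u\in N(y)$; this is what makes the lift always succeed. You do not have an analogous safety property for the $X$-vertices you delete.

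Third, your treatment of non-wholesome $y$ with non-orthogonal neighbors is a gesture, not an argument: ``reduced to orthogonal form using the spotless inequalities'' does not apply directly, because near-orthogonality imposes conditions only on wholesome vertices, and $P$ need not be spotless at this stage. The paper's device is cleaner and avoids the issue entirely: it moves all non-wholesome components of $G|Y^*$ out of $Y^*$ and into $X$, which is legal because a non-wholesome vertex has $|M_P(y)|\le 2$, hence has neighbors of at least two different colors in $S$ and so satisfies axiom (D) for membership in $X$. You never mention this move, and without it the orthogonality claim for $P'$ does not follow.

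Finally, you correctly sense the ``main obstacle'' -- that commitments made across all processed $y$ must be simultaneously realizable -- but your construction makes that obstacle strictly harder than it needs to be, and you do not explain how $P_6$-freeness resolves it. The paper in fact needs no such global simultaneity argument precisely because its commitments are weaker and structured per troublesome component.
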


\begin{proof}
We may assume that $P$ is not orthogonal. We say that a component 
$C$ of $G|Y^*$ is \emph{troublesome} if $C$ is wholesome, and the set of attachments of $C$ in $X$ are not orthogonal. Let $W$ be the union of the vertex sets
of the component of $G|Y^*$ that are not wholesome.

We construct a set $Z$, starting with $Z=\emptyset$.
For every troublesome component $C$, proceed as follows.
We may assume that $C$ has attachments in $X_{12}$ and in $X_{13}$.
Since $P$ is near-orthogonal, and $C$ is wholesome, we may assume that $C$ 
contains a vertex $z$ with $M_P(z)=\{1,2,4\}$. 
\begin{itemize}
\item If there is $y \in V(C)$ with $M_P(y)=\{1,3\}$,  move $N(y) \cap X_{12}$
to $X_0$ with color $2$.
\item Suppose that there is no $y$ as in the first bullet.
If $|V(C)|>2$, or $V(C)=\{z\}$ and $z$ has a neighbor $v$ in $X_0$ with
$f(v)=\{4\}$, move $N(z) \cap X_{13}$ to $X_0$ with color $3$.
\item If none of the first two conditions hold, add $V(C)$ to $Z$.
Observe that in this case $V(C)=\{y\}$, $y$ has no neighbors in
$Z \setminus \{y\}$. Moreover, since $P$ is near-orthogonal, $V(C)$ is anticomplete to
$X \setminus (X_{12} \cup X_{13})$, and so  for every $u \in N(y)$, 
$4 \not \in L_P(u)$.
In this case we call $4$ the \emph{free color} of $y$.
\end{itemize}
Let $P''=(G, S',X_0',X'', Y'',f')$  be the precoloring we obtained
after we applied the procedure above to all troublesome components.
Let $G'=G \setminus Z$,
and let $P'=(G',S',X_0',X', Y',f')$ where 
$Y'=Y'' \setminus W \cup Z$ and $X'=X'' \cup W$.
Since no vertex of $W$ is wholesome, It follows from the definition of $M_P$ that
every vertex of $W$ has neighbors of at least two different colors in $S'$ 
(with respect to $f'$).  Since $W$ is anticomplete to $Y'$, 
$X' \setminus W \subseteq X$, and
$Y' \subseteq Y^*$, we deduce that  $P'$ is excellent and  orthogonal.  
It follows from the construction of $Z$
that every precoloring extension of $P'$ can be extended to a 
precoloring extension of $P$ by giving each member of $Z$  its free color.

It remains to show that if $P$ is smooth, then $P'$ has a precoloring extension.
Suppose that $P$ is smooth, and let $c$ be a good precoloring extension of
$P$. We claim that $c|V(G')$ is a  precoloring extension of $P'$. 
We need to show that $c(v)=f'(v)$ for every $v \in S' \cup X_0'$.
Since $S=S'$, and $f(v)=f'(v)$ for every $v \in X_0$, it is enough to show
that $c(v)=f'(v)$ for every $v \in X_0' \setminus X_0$.
Thus we may assume that there is a troublesome component $C$ of $G|Y^*$ 
that has an attachment in $X_{12}$ and an attachment in $X_{13}$, and 
$v \in X(C)$.
Since $P$ is near-orthogonal, we may assume that $C$ contains a vertex $y$ with
$M_P(y)=\{1,2,4\}$, and $v \in X_{12} \cup X_{13}$. There are two possibilities.
\begin{enumerate}
\item There is $y \in V(C)$ with $M_P(y)=\{1,3\}$,
$v \in N(y) \cap X_{12}$ and $f'(v)=2$, but $c(v)=1$. 
We show that this is impossible.
Since $c$ is a good coloring, it follows that $c(u)=3$ for every 
$u \in N(y) \cap X_{13}$,  contrary to the fact that $c$ is a coloring of $G$.
\item There is no $y$ as in the first case, and either $|V(C)|>2$, or 
$V(C)=\{z\}$ and  $z$ has a neighbor $u$ in $X_0$ with $f(u)=\{4\}$,
and $v \in X_{13} \cap N(z)$, $f'(v)=3$ but $c(v)=1$. We show that this too is 
impossible. It follows that there is a vertex
$y' \in V(C)$ with $c(y) \neq 4$. 
Choose such  $y'$ with $4 \not \in M_P(y')$ if  possible.
Since $P$ is excellent, $y'$ is adjacent to $v$.
Since $c$ is a good coloring, it follows that
$c(u) = 2$ for every $u \in X_{12} \cap N(y')$.  This implies that
$c(y')=3$. 
Since $P$ is near-orthogonal and $3 \in M_P(y')$, it follows that 
$2 \not \in M_P(y')$. Since $M_P(y') \neq \{1,3\}$, 
it follows that $4 \in L(y')$.  Since $1,2 \in M_P(y)$ and
$3 \in M_P(y')$, and since $P$ is near-orthogonal, it follows that
there is $z \in V(C)$ such that $4 \not \in M_P(z)$.
Since $c(v)=1$ and $c(u)=2$ for every attachment of $V(C)$ in $X_{12}$,  it 
follows that $c(z)=3$, contrary to the choice of $y'$. 
\end{enumerate}
Thus $c(v)=f'(v)$ for every $v \in S' \cup X_0'$, and so 
$c|V(G')$ is a  precoloring extension of $P'$.
This completes the proof of Lemma~\ref{orthogonal}.
\end{proof}

We can now prove the main result of this section.
\begin{theorem}
\label{orthogonalthm} There is a function $q : \mathbb{N} \rightarrow \mathbb{N}$ such that the following holds. 
Let $P=\esspc$ be an excellent starred  
precoloring of a $P_6$-free  graph $G$ with $|S| \leq C$. 
Then there is an algorithm with running time $O(|V(G)|^{q(|S|)})$ that outputs a collection
$\mathcal{L}$ of orthogonal excellent starred   precolorings of induced 
subgraphs of  $G$  such that:
\begin{itemize}
\item $|\mathcal{L}| \leq |V(G)|^{q(|S|)}$;
\item $|S'| \leq q(|S|)$ for every $P' \in \mathcal{L}$, and
\item $P$ has a precoloring extension, if and only if some 
$P' \in \mathcal{L}$ has a precoloring extension;
\item given a precoloring extension of a member of $\mathcal{L}$, a 
precoloring extension of $P$ can be constructed in polynomial time.
\end{itemize}
\end{theorem}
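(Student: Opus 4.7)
The plan is to chain together the lemmas developed earlier in the section, producing the desired collection $\mathcal{L}$ by composing four transformations. Starting from the input precoloring $P$, I first apply Lemma~\ref{near} to obtain a collection $\mathcal{L}_1$ of near-orthogonal excellent starred precolorings equivalent to $P$, with seed size and collection size bounded by $|V(G)|^{q_1(|S|)}$ for some function $q_1$. Next, for each $P_1 \in \mathcal{L}_1$, I apply Lemma~\ref{smooththm} to obtain a collection $\mathcal{L}_2(P_1)$ of near-orthogonal precolorings whose union $\mathcal{L}_2$ still has polynomially bounded size and whose seed sizes are still bounded by a function of $|S|$. By Lemma~\ref{smooththm}, every precoloring extension of a member of $\mathcal{L}_2$ is a precoloring extension of the corresponding member of $\mathcal{L}_1$, and if $P_1$ has an extension, then some $P_2 \in \mathcal{L}_2(P_1)$ is smooth.

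Then, for each $P_2 \in \mathcal{L}_2$, I apply Lemma~\ref{orthogonal} to obtain a single orthogonal excellent starred precoloring $P' = \pi(P_2)$ of an induced subgraph of $G$, with the same seed as $P_2$. Define $\mathcal{L} = \{\pi(P_2) : P_2 \in \mathcal{L}_2\}$. The size bounds and seed size bounds for $\mathcal{L}$ follow immediately from composing the bounds at each stage: there exists a function $q$ (obtained by iterated composition of $q_1$ with the functions guaranteed by Lemmas~\ref{smooththm} and~\ref{orthogonal}) such that $|\mathcal{L}| \leq |V(G)|^{q(|S|)}$ and $|S'| \leq q(|S|)$ for every $P' \in \mathcal{L}$, and the total running time is $O(|V(G)|^{q(|S|)})$.

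For the equivalence, I argue both directions. If some $P' \in \mathcal{L}$ has a precoloring extension $c$, then by Lemma~\ref{orthogonal}, $c$ can be extended in polynomial time to a precoloring extension of the corresponding $P_2 \in \mathcal{L}_2$; this in turn is a precoloring extension of the corresponding $P_1 \in \mathcal{L}_1$ by Lemma~\ref{smooththm}; and finally a precoloring extension of $P$ by Lemma~\ref{near}. Conversely, suppose $P$ has a precoloring extension. Then by Lemma~\ref{near} some $P_1 \in \mathcal{L}_1$ has a precoloring extension; by Lemma~\ref{smooththm} some $P_2 \in \mathcal{L}_2(P_1)$ is smooth; and by Lemma~\ref{orthogonal} the precoloring $\pi(P_2) \in \mathcal{L}$ has a precoloring extension.

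There is no genuine obstacle here; the work is bookkeeping to verify that the four lemmas compose cleanly. The only mildly delicate point is to observe that Lemma~\ref{orthogonal} takes a smooth near-orthogonal precoloring and guarantees the existence of an extension of the resulting orthogonal precoloring only when the input is smooth; this is exactly why we invoke Lemma~\ref{smooththm} beforehand, to ensure that whenever an extension of the original $P$ exists, at least one $P_2 \in \mathcal{L}_2$ is smooth so that Lemma~\ref{orthogonal} produces a $P' \in \mathcal{L}$ that is guaranteed to have an extension. Together with the polynomial-time extension-lifting from Lemma~\ref{orthogonal}, this yields all four bullets in the statement.
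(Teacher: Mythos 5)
Your proposal is correct and follows essentially the same route as the paper: apply Lemma~\ref{near} to get near-orthogonal precolorings, then Lemma~\ref{smooththm} to ensure a smooth member exists whenever $P$ is extendable, then Lemma~\ref{orthogonal} to convert each to an orthogonal precoloring of an induced subgraph, composing the size and seed bounds at each stage. The only quibble is that you say ``four transformations'' but in fact chain exactly three lemmas, matching the paper.
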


\begin{proof}

By Lemma~\ref{near} there  exist a function 
$q_1 : \mathbb{N} \rightarrow \mathbb{N}$ 
and a polynomial-time algorithm that outputs a collection
$\mathcal{L}_1$ of excellent starred  precolorings of $G$ such that:
\begin{itemize}
\item $|\mathcal{L}_1| \leq |V(G)|^{q_1(|S|)}$;
\item $|S'| \leq q_1(|S|)$ for every $P' \in \mathcal{L}_1$;
\item every $P' \in \mathcal{L}_1$ is near-orthogonal; and
\item $P$ is equivalent to $\mathcal{L}_1$.
\end{itemize}

Let $P' \in \mathcal{L}_1$. 
Write $P'=(G, S(P'),X_0(P'),X(P'),Y^*(P'), f_{P'})$. 
By Lemma~\ref{smooththm}
there  exist a function $q_2 : \mathbb{N} \rightarrow \mathbb{N}$ 
and a polynomial-time algorithm that outputs a collection
$\mathcal{L}(P')$ of near-orthogonal excellent starred   precolorings of 
$G$  such that:
\begin{itemize}
\item $|\mathcal{L(P')}| \leq |V(G)|^{q_2(|S(P')|)}$;
\item $|S'| \leq q_2(|S(P')|)$ for every $P' \in \mathcal{L}$;
\item if $P'$ has a precoloring extension, then some $P'' \in \mathcal{L}(P')$ 
is  smooth; and
\item a precoloring extension of a member of $\mathcal{L}(P')$ is also a 
precoloring  extension of $P'$.
\end{itemize}
Let $\mathcal{L}_2=\bigcup_{P' \in \mathcal{L}}\mathcal{L}(P')$.

Clearly $\mathcal{L}_2$ has the following properties:

\begin{itemize}
\item $|\mathcal{L}_2| \leq |V(G)|^{q_1(q_2(|S|))}$;
\item $|S'| \leq q_1(q_2(|S(P)|))$ for every $P' \in \mathcal{L}_2$;
\item if $P$ has a precoloring extension, then some $P'' \in \mathcal{L}(P')$ 
is  smooth; and
\item  given a precoloring extension of a member of $\mathcal{L}_2$,
one can construct in polynomial time a precoloring  extension of $P$. 
\end{itemize}

Let $P'' \in \mathcal{L}_2$. Write  
$P''=(G,S(P''),X_0(P;'),X'(P'),Y^*(P''), f_{P''})$.
By Lemma~\ref{orthogonal}, there exists an induced  subgraph $G'$ of $G$ and an 
orthogonal  excellent starred  precoloring
$Orth(P'')=(G',S',X_0',X', Y',f')$ of $G'$, such that 
\begin{itemize}
\item $S(P'')=S'$;
\item  if $P''$ is smooth, then $Orth(P'')$ has a precoloring extension; and  
\item  if $c$ is a precoloring extension of $Orth(P'')$, then a precoloring 
extension of $P''$, and therefore of $P$, can be constructed from $c$ in 
polynomial time.
\end{itemize}
Moreover, $Orth(P'')$ can be constructed in polynomial time.

Let $\mathcal{L}=\{Orth(P'') \; : \; P'' \in \mathcal{L}_2\}.$
Now $\mathcal{L}$ has the following properties.
\begin{itemize}
\item $|\mathcal{L}| \leq |V(G)|^{q_1(q_2(|S|)}$;
\item $|S'| \leq q_1(q_2(|S|))$ for every $P' \in \mathcal{L}$; and
\item  if $c$ is a precoloring extension of $P' \in \mathcal{L}$, then a 
precoloring extension of  $P$ can be constructed from $c$ in polynomial time.
\item every $P' \in \mathcal{L}$ is orthogonal.
\end{itemize}
To complete the proof of the Theorem~\ref{orthogonalthm} we need to show that
if $P$ has a precoloring extension, then some $P' \in \mathcal{L}$ 
has a precoloring extension.
So assume that $P$ has a precoloring extension. Since $\mathcal{L}_1$ is equivalent to $P$, it follows that some $P_1 \in \mathcal{L}_1$ has a precoloring
extension. This implies that some 
$P_2 \in \mathcal{L}(P_1) \subseteq \mathcal{L}_2$ is smooth.
But now $Orth(P_2)$ has a precoloring extension, and 
$Orth(P_2) \in \mathcal{L}$. This completes the proof of 
Theorem~\ref{orthogonalthm}. \end{proof}

\section{Companion triples} \label{sec:companion}

In view of Theorem~\ref{orthogonalthm} we now focus on testing for the 
existence of a precoloring extension for an orthogonal excellent starred  
precoloring.

Let $G$ be a $P_6$-free graph, and let $P=\esspc$ be an orthogonal excellent starred  precoloring of $G$. We may assume that $X_0=X^0(P)$.
Let $\mathcal{C}(P)$ be the set of components of $G|Y^*$, and let 
$C \in \mathcal{C}(P)$.  It follows that $X \setminus X(C)$
is anticomplete to $V(C)$, and we may assume (using symmetry) that
$X(C) \subseteq X_{12} \cup X_{34}$. We now define the precoloring obtained from
$P$ by \emph{contracting the $ij$-neighbors of $C$}, or, in short,
by \emph{neighbor contraction}. We may assume that $\{i,j\}=\{1,2\}$.
Suppose that $X_{12} \cap X(C) \neq \emptyset$, and let
$x_{12} \in X_{12} \cap X(C)$. Let $\tilde{G}$ be the graph define as follows:
$$V(\tilde{G})=G \setminus (X_{12} \cap X(C)) \cup \{x_{12}\}$$ 
$$\tilde{G} \setminus \{x_{12}\}=G \setminus (X_{12} \cap X(C))$$
$$N_{\tilde{G}}(x_{12})=\bigcup_{x \in X_{12} \cap X(C)} N_G(x) \cap V(\tilde{G}).$$
Moreover, let
$$\tilde{X}=X \setminus (X_{12} \cap X(C)) \cup \{x_{12}\}.$$
Then $\tilde{P}=(\tilde{G},X_0,\tilde{X}, Y^*,f)$ is an orthogonal 
excellent  starred   precoloring of $\tilde{G}$.  We 
say that $\tilde{P}$ is \emph{obtained from $P$ by contracting the
$12$-neighbors of $C$}, or, in short, \emph{obtained from $P$ by neighbor
contraction}.
We call $x_{12}$ the image of $X_{12} \cap X(C)$, and
define $x_{12}(C)=x_{12}$. Observe that $x_{12} \in X$
(this fact simplifies notation later), and  that $M_P(v)=M_{\tilde{P}}(v)$
for every $v \in V(\tilde{G})$.

For $i,j \in \{1,2,3,4\}$ and $t \in X_0 \cup S$ let
$\tilde{G}_{ij}(t)=\tilde{G}|(\tilde{X}_{ij} \cup Y^* \cup \{t\})$. 
While graph  $\tilde{G}$ may not be $P_6$-free, the following weaker statement 
holds:

\begin{lemma}
\label{P6free}
Let $P$ be an excellent orthogonal precoloring of a $P_6$-free graph $G$.
Let $C \in \mathcal{C}(P)$ and assume that
$X(C) \cap X_{12}$ is non-empty. Let 
$\tilde{P}=(\tilde{G}, X_0,\tilde{X},Y^*, f)$ be obtained from $P$ by 
contracting the $12$-neighbors of $C$. Then $\tilde{G}_{ij}(t)$ is 
$P_6$-free for every $i,j \in \{1,2,3,4\}$
and $t \in S \cup X_0$.
\end{lemma}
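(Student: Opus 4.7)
The plan is to argue by contradiction: suppose $\tilde{G}_{ij}(t)$ contains an induced $P_6$, say $v_1-v_2-v_3-v_4-v_5-v_6$, and manufacture from it an induced $P_6$ in $G$, contradicting the $P_6$-freeness of $G$. Two easy reductions come first. If $\{i,j\} \neq \{1,2\}$ then $x_{12}$ is not a vertex of $\tilde{G}_{ij}(t)$, and if $x_{12}$ does not occur on the $P_6$ then the path lies entirely in $G$; in either case we are done. So assume $\{i,j\}=\{1,2\}$ and $x_{12}=v_k$ for some $k$; by the palindromic symmetry of a $P_6$ reduce to $k \in \{1,2,3\}$.

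Throughout the analysis I will rely on three observations: (a) if $v_\ell$ is non-adjacent to $x_{12}$ in $\tilde{G}$, then by the definition of neighbor contraction $v_\ell$ is non-adjacent in $G$ to every $x \in X_{12} \cap X(C)$, and in particular $v_\ell \notin V(C)$ since $X_{12} \cap X(C)$ is non-empty and complete to $V(C)$; (b) every vertex of $X_{12} \setminus X(C)$ is anticomplete to $V(C)$ (since no vertex of $X$ is mixed on a component of $G|Y^*$), and every vertex of $Y^* \setminus V(C)$ is anticomplete to $V(C)$ (different components); (c) every $x \in X_{12} \cap X(C)$ is complete to $V(C)$.

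In case $k=1$, any $x^* \in X_{12} \cap X(C)$ adjacent to $v_2$ (one exists because $v_2$ is adjacent to $x_{12}$) is by (a) non-adjacent to $v_3,v_4,v_5,v_6$, so $x^*-v_2-v_3-v_4-v_5-v_6$ is an induced $P_6$ in $G$. In case $k=2$, choose $x_1,x_3 \in X_{12}\cap X(C)$ adjacent to $v_1,v_3$ respectively; if a single common choice exists then substitute it for $x_{12}$ and we are done. Otherwise $x_1 \neq x_3$, $x_1$ is non-adjacent to $v_3$ and $x_3$ is non-adjacent to $v_1$; by (c) this forces $v_1,v_3 \notin V(C)$, while (a) gives $v_4,v_5,v_6 \notin V(C)$. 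If $x_1$ is adjacent to $x_3$, then $x_1-x_3-v_3-v_4-v_5-v_6$ is an induced $P_6$ in $G$. Otherwise pick any $y \in V(C)$: by (b) and (c), $y$ is automatically non-adjacent to each $v_\ell \in \{v_1,v_3,v_4\}$ with $v_\ell \neq t$, so the candidate $v_1-x_1-y-x_3-v_3-v_4$ is an induced $P_6$ in $G$ unless $t \in \{v_1,v_3,v_4\}$ and $t$ is complete to $V(C)$. In that exceptional situation I exploit the edge between $y$ and $t$ by switching paths: for $t=v_1$ use $v_6-v_5-v_4-v_3-x_3-y$; for $t=v_3$ use $v_1-x_1-y-v_3-v_4-v_5$; for $t=v_4$ use $v_6-v_5-v_4-y-x_1-v_1$. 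Case $k=3$ proceeds analogously, with main candidates $v_1-v_2-x_2-x_4-v_4-v_5$ (if $x_2$ and $x_4$ are adjacent) or $v_1-v_2-x_2-y-x_4-v_4$ (otherwise), plus analogous rerouting through $V(C)$ when $t$ is complete to $V(C)$ and lies on the candidate path.

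The main obstacle is exactly the edge case where $t$ lies on the original $P_6$ and is complete to $V(C)$: the natural substitution breaks down because every candidate $y \in V(C)$ is adjacent to $t$. The resolution is to use the $y$-$t$ edge itself as an edge of the new $P_6$, giving a different combinatorial pattern for each of the few positions $t$ can occupy; checking that all required non-edges hold in each pattern is routine via observations (a), (b), and (c).
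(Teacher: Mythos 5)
Your proposal is correct and takes essentially the same approach as the paper: replace $x_{12}$ in the offending $P_6$ by a short detour through $X_{12}\cap X(C)$ and (if necessary) $V(C)$, and when the vertex $t$ obstructs the straightforward substitution by being complete to $V(C)$, reroute using the $t$--$V(C)$ edge. Your explicit case analysis on the position $k$ of $x_{12}$ and of $t$ is a more concrete rendering of the paper's argument (which phrases the substitution as a shortest path $Q'$ from the two $Q$-neighbors of $x_{12}$ through $X(C)\cup V(C)$ and derives $t=c$ from $a$--$Q'$--$b$--$c$ not being a $P_6$), but the combinatorial content and the key ideas are the same.
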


\begin{proof} 
If $\{i,j\} \neq \{1,2\}$, then  $\tilde{G}_{ij}(t)$ is an 
induced subgraph of $G$, and therefore it is $P_6$-free. So we may assume that
$\{i,j\}=\{1,2\}$. 
Suppose that $Q=q_1- \ldots -q_6$ is a $P_6$ in $\tilde{G}_{ij}(t)$. Since
$\tilde{G}_{ij}(t) \setminus x_{12}$ is an induced subgraph of $G$, it follows 
that $x_{12} \in V(Q)$. If the neighbors of $x_{12}$ in $Q$ have a common neighbor $n \in X(C) \cap X_{12}$, then $G|((V(Q) \setminus \{x_{12}\}) \cup \{n\}) $ is a $P_6$ in $G$, a contradiction. It follows that  $x_{12}$ has two neighbors in $Q$, say $a,b$, each of $a,b$ has a neighbor in $X_{12} \cap X(C)$, and no 
vertex of $X(C) \cap X_{12}$ is complete
to $\{a,b\}$. Since $V(C)$ is complete to $X(C)$, it follows that
$a,b \not \in V(C)$, and so 
$a,b \in (X_{12} \setminus X(C)) \cup (Y^* \setminus V(C)) \cup \{t\}$.
Let $Q'$ be a shortest path from $a$ to $b$ with 
${Q'}^* \subseteq X(C) \cup V(C)$. Since $V(Q) \setminus \{a,b,t\}$ is 
anticomplete to $V(C)$, and $V(Q) \setminus \{a,b\}$ is anticomplete
to $X(C) \cap X_{12}$, it follows that $V(Q')$ is anticomplete to 
$V(Q) \setminus (\{x_{12}\} \cup \{a,b,t\})$.  Moreover, if $t \neq a,b$, then
$t$ is anticomplete to ${Q'}^* \setminus V(C)$. 
If follows that if 
$t \not  \in V(Q) \setminus \{a,b,x_{12}\}$ or $t$ is anticomplete to 
$V(Q') \cap V(C)$
then  $q_1-Q-a-Q'-b-Q-q_6$ is a path of length at least six in $G$, 
a contradiction; 
so $t \in V(Q) \setminus \{a,b,x_{12}\}$, and $t$ has a neighbor in 
$V(Q') \cap V(C)$.  Since $V(C)$ is complete to
$X(C)$, it follows that $|V(C) \cap V(Q')| = 1$, and  
$|{Q'}^*|=3$. Let $V(Q') \cap V(C)=\{q'\}$.
We may assume that $b$ has a neighbor $c \in V(Q) \setminus \{x_{12}\}$,
and if $a=q_i$ and $b=q_j$, then $i<j$. 
Since $a-Q'-b-c$ is not a $P_6$ in $G$, it follows that $t=c$.
But now $q_1-a-Q'-q-t-Q-q_6$ is a $P_6$ in $G$, a contradiction.
This proves Lemma~\ref{P6free}.
\end{proof}

Let $P=\esspc$ be an orthogonal excellent starred precoloring.
Let $H$ be a graph, and let
$L$ be a $4$-list assignment for $H$.  Recall that $X^0(L)$ is the set of 
vertices of $H$ with $|L(x_0)|=1$. Let $M$ be the list assignment
obtained from $M_P$ by updating $Y^*$ from $X_0$.
 We say that $(H,L,h)$ is a 
\emph{near-companion triple for $P$ with correspondence $h$} if 
there is an orthogonal excellent starred  precoloring 
$\tilde{P}=(\tilde{G},S,X_0,\tilde{X},Y^*,f)$ 
obtained from $P$ by a sequence of neighbor contractions, and
the following hold:
\begin{itemize}
\item $V(H) = \tilde{X} \cup Z$; 
\item $h: Z \rightarrow \mathcal{C}(P)$;
\item for every $z \in Z$, $N(z)=\tilde{X}(V(h(z)))$;
\item $H|(Z \cup \tilde{X}_{ij})$ is $P_6$-free for all $i,j$;
\item $Z$ is a stable set;
\item for every $x \in \tilde{X}$, $L(x) \subseteq M_P(x)=M(x)$;
\item for every $z \in Z$ such that  $L(z) \neq \emptyset$, if $q \in \{1,2,3,4\}$ and $q \not \in L(z)$, then some 
vertex $V(h(z))$ has a neighbor $u \in S \cup X_0 \cup X^0(L)$ with $f(u)=q$; and
\item for every $z \in Z$ and every $q \in L(z)$, there is
  $v \in V(h(z))$ with $q \in M(v)$, and no vertex $u \in S \cup X_0$
  with $f(u)=q$ is complete to $V(h(z))$.
\end{itemize}
For $z \in Z$, we call $h(z)$ the \emph{image of $z$}. 

If $(H,L,h)$ is a near-companion triple for $P$, and in addition
\begin{itemize}
\item $\tilde{P}$ has a precoloring extension if and only if $(H,L)$ is 
colorable, and a coloring of $(H,L)$ can be converted to a precoloring
extension of $P$ in polynomial time.
\end{itemize}
we say that $(H,L,h)$ is a \emph{companion triple} for $P$.

For $i,j \in \{1,2,3,4\}$ and $t \in S \cup X_0$ let $H_{ij}(t)$ be the graph 
obtained from $H|(\tilde{X}_{ij} \cup Z)$ by adding the vertex $t$ and 
making $t$ adjacent to 
the vertices of  $N_{\tilde{G}}(t) \cap \tilde{X}_{ij}$.
The following is a key  property of near-companion triples.

\begin{lemma}
\label{keycompanion}
Let $G$ be a $P_6$-free graph, let $P=\esspc$ be an orthogonal excellent 
starred precoloring of $G$, and let $(H,L,h)$ be a near-companion triple
for $P$. 
Let $M$ be the list assignment obtained from $M_P$ by updating $Y^*$ from 
$X_0$.
Assume that $L(v) \neq \emptyset$ for every $v \in V(H)$.
Let $i,j \in\{1,2,3,4\}$ and $t \in X_0 \cup S$, and let $Q$ be a 
$P_6$ in $H_{ij}(t)$.
Then $t \in V(Q)$, and there exists  $q \in V(Q) \setminus N(t)$ such that 
$f(t) \not \in M(q)$.
\end{lemma}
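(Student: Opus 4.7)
My plan is to reduce Lemma~\ref{keycompanion} to Lemma~\ref{P6free} by lifting a hypothetical bad $P_6$ in $H_{ij}(t)$ to one in $\tilde{G}_{ij}(t)$. The first assertion, $t \in V(Q)$, is immediate from the definition of near-companion triple, which requires $H|(Z \cup \tilde{X}_{ij})$ to be $P_6$-free; since $H_{ij}(t)$ only adds $t$ to this induced subgraph, every $P_6$ of $H_{ij}(t)$ must use $t$.

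For the second assertion I would argue by contradiction. Assume $f(t) \in M(q)$ for all $q \in V(Q) \setminus N(t)$, write $Q = q_1\text{--}\cdots\text{--}q_6$, and construct an induced $P_6$ in $\tilde{G}_{ij}(t)$, contradicting Lemma~\ref{P6free}. The lift uses representatives $v_i \in V(\tilde{G})$: for $q_i \in \tilde{X}_{ij} \cup \{t\}$ take $v_i = q_i$; for $q_i \in Z$, exploit $f(t) \in M(q_i)$ (interpreted via the near-companion triple as the existence of some $v \in V(h(q_i))$ with $f(t) \in M(v)$) to choose $v_i \in V(h(q_i)) \subseteq Y^*$ with $f(t) \in M(v_i)$. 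The crucial consequence is that $f(t) \in M(v_i)$, combined with $M$ being obtained from $M_P$ by updating $Y^*$ from $X_0$, forces $v_i \notin N(t)$ in $\tilde{G}$.

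Verifying that $v_1\text{--}\cdots\text{--}v_6$ is an induced $P_6$ in $\tilde{G}_{ij}(t)$ is case-checking that leans on the orthogonality and excellence of $\tilde{P}$. Adjacencies across a $Z$-vertex $q_i$ lift automatically from $N_H(q_i) = \tilde{X}(V(h(q_i)))$, which forces $q_i$'s $H$-neighbors in $\tilde{X}_{ij}$ to be complete to $V(h(q_i))$ in $\tilde{G}$ regardless of the chosen $v_i$. Non-adjacencies between a $Z$-representative $v_i$ and a non-neighbor $q_j \in \tilde{X}_{ij}$ use excellence of $\tilde{P}$: no vertex of $\tilde{X}$ is mixed on a component of $\tilde{G}|Y^*$, so $q_j$ is anticomplete to $V(h(q_i))$. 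Non-adjacencies from $v_i$ to $t$ are exactly what the choice of $v_i$ delivered, and non-adjacencies between two $Z$-representatives $v_i, v_j$ with $|i-j| \ge 2$ come from $h$ sending distinct $Z$-vertices to distinct components of $G|Y^*$.

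The main obstacle I expect is the case-check for the contracted images $x_{ij}(C) \in \tilde{X}$: unlike original vertices of $X$, their non-mixing on components of $\tilde{G}|Y^*$ is not directly inherited from the excellence of $P$, but rather from the fact that $\tilde{P}$ itself is an excellent starred precoloring of $\tilde{G}$, which must be invoked carefully (and which was established alongside the definition of neighbor contraction). Once this subtlety is handled, the lifted six-vertex path is an induced $P_6$ in $\tilde{G}_{ij}(t)$, contradicting Lemma~\ref{P6free} and completing the proof.
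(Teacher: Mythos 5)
Your proof takes essentially the same route as the paper's: show $t\in V(Q)$ from $P_6$-freeness of $H|(\tilde{X}_{ij}\cup Z)$, then argue by contradiction, replacing each $z\in V(Q)\cap Z$ by a representative $q(z)\in V(h(z))$ with $f(t)\in M(q(z))$ (whose existence is guaranteed by the near-companion-triple axioms since $f(t)\in L(z)$), note that updating $Y^*$ from $X_0$ (or the definition of $L_P$ when $t\in S$) forces $t$ non-adjacent to $q(z)$, and then obtain a $P_6$ in $\tilde{G}_{ij}(t)$ contradicting Lemma~\ref{P6free}. One small imprecision: $h$ is not in general injective (the construction in Theorem~\ref{companion} assigns one vertex $v(C,Q)$ per friendly bad pair $Q$), so you cannot directly appeal to ``$h$ sending distinct $Z$-vertices to distinct components''; instead, note that two distinct $Z$-vertices with the same image would have identical $H$-neighborhoods, which is impossible for two distinct vertices of an induced $P_6$ — so within $Q$ they do map to distinct components and the representatives land in non-adjacent components of $\tilde{G}|Y^*$.
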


\begin{proof}
Since $H|(\tilde{X}_{ij} \cup Z)$ is $P_6$-free, it follows that $t \in V(Q)$. 
Suppose that for every $q \in V(Q) \setminus N(t)$, $f(t) \in L(q)$.
Let $z \in V(Q) \cap Z$.
Since $t$ is anticomplete to $Z$, it follows that $f(t) \in L(z)$ 
By  the definition of a near-companion triple, there is a vertex 
$q(z) \in V(h(z))$ such that $f(t) \in M(q(z))$. Since $M$ is obtained
from $M_P$ by updating $Y^*$ from $X_0$, it follows that
$t$ is non-adjacent to $q(z)$. 
Now replacing $z$ with $q(z)$ for every $z \in V(Q) \cap Z$, 
we get a $P_6$ in $\tilde{G}_{ij}(t)$ that contradicts Lemma~\ref{P6free}.
This proves Lemma~\ref{keycompanion}.
\end{proof}

The following is the main result of this section.

\begin{theorem}
\label{companion}
Let $G$ be a $P_6$-free graph and let $P=\esspc$ be an orthogonal excellent 
starred precoloring of $G$. Then there is a polynomial time algorithm that 
outputs a companion triple for $P$.
\end{theorem}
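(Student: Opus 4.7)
The plan is to proceed in two phases: first perform a sequence of neighbor contractions on $P$ to obtain $\tilde{P}$, and then construct $H$, $Z$, $L$, and $h$ directly from $\tilde{P}$.

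For the contractions, orthogonality gives, for each $C \in \mathcal{C}(P)$, a partition $\{1,2,3,4\} = \{a,b\} \cup \{c,d\}$ with $X(C) \subseteq X_{ab} \cup X_{cd}$. I iterate over components and contract $X_{ab}(C)$ and $X_{cd}(C)$ each to a single vertex whenever non-empty, using the neighbor-contraction operation. The resulting $\tilde{P} = (\tilde{G}, S, X_0, \tilde{X}, Y^*, f)$ remains an orthogonal excellent starred precoloring and satisfies Lemma~\ref{P6free}. Crucially, in $\tilde{P}$ every component $C$ has $|\tilde{X}(V(C))| \leq 2$, namely at most $x_{ab}(C)$ and $x_{cd}(C)$.

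For the construction of $(H,L,h)$ I set $V(H) = \tilde{X} \cup Z$, where for each $C \in \mathcal{C}(P)$ I add a vertex $z_C$ to $Z$ with $h(z_C) = C$ and $N_H(z_C) = \tilde{X}(V(C))$, so that $Z$ is a stable set by construction. I set $L(x) = M_P(x)$ for every $x \in \tilde{X}$, and for $z = z_C$ I take the initial list
$$L(z) = \bigl\{\, q \in \{1,2,3,4\} : \exists\, v \in V(C) \text{ with } q \in M(v), \text{ and no } u \in S \cup X_0 \text{ with } f(u)=q \text{ is complete to } V(C)\,\bigr\},$$
and then update $L$ exhaustively. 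The list-assignment conditions in the definition of a near-companion triple are then verified directly from the construction. To show that $H|(Z \cup \tilde{X}_{ij})$ is $P_6$-free, I lift any hypothetical $P_6$ through a vertex $z_C$ to a $P_6$ in $\tilde{G}_{ij}(t)$ by replacing $z_C$ with a vertex of $V(C)$ whose updated list contains the would-be color of $z_C$; this lift, together with Lemma~\ref{P6free}, yields a contradiction in the spirit of Lemma~\ref{keycompanion}.

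For the coloring equivalence, the forward direction is straightforward: given a precoloring extension $g$ of $\tilde{P}$, set $c(x) = g(x)$ on $\tilde{X}$ and $c(z_C) = g(v)$ for any $v \in V(C)$; one checks $c(z_C) \in L(z_C)$ using propriety of $g$. The converse direction is the crux: starting from an $L$-coloring $c$ of $H$, I put $g(x) = c(x)$ on $\tilde{X}$ and must properly color each $V(C)$ within the residual list $M(y) \setminus \{c(x) : x \in \tilde{X}(V(C))\}$, which has size $\geq |M(y)|-2$ since $|\tilde{X}(V(C))| \leq 2$. Because $G|V(C)$ is $P_6$-free and the remaining list-coloring is essentially a 2-list-coloring problem (equivalent to 2-SAT), it can be solved in polynomial time, and the choice of $L(z_C)$ together with near-orthogonality should guarantee feasibility.

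The main obstacle is exactly this converse step: one must rule out the case where a seemingly valid $L$-coloring of $H$ fixes the attachment colors in a way that makes the residual 2-list-coloring problem on some $V(C)$ infeasible. If the simple one-vertex-per-component definition of $L$ above does not suffice (for instance when $V(C)$ contains a triangle forcing three colors while the residual list is two-wide), the fix is to introduce a bounded number of additional vertices of $Z$ per component, each tuned to exclude one further color on the attachments, or to strengthen $L(z_C)$ via further exhaustive updating. The running time of the whole procedure is polynomial, since every contraction, vertex insertion, and list update is done in polynomial time and the number of operations is linear in $|V(G)|$.
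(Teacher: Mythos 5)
Your proposal does not actually prove the theorem; it reconstructs the outline but misses the key combinatorial idea that makes the construction sound. Two concrete points.

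First, your contraction step is too aggressive. You contract $X_{ab}(C)$ to a single vertex whenever it is non-empty. But contraction (replacing a set by a single image vertex) is only valid when that set is forced to be monochromatic in \emph{every} precoloring extension. When $\tilde{X}(C)$ meets both $X_{ab}$ and $X_{cd}$ and $|V(C)|>1$, monochromaticity of each part does hold, because $V(C)$ must then use exactly one color from $\{a,b\}$ and one from $\{c,d\}$. But when $\tilde{X}(C)$ meets only $X_{ab}$, monochromaticity fails in general: if $(C,M)$ admits a proper coloring using only colors from $\{c,d\}$, then $X_{ab}(C)$ can legitimately use both $a$ and $b$. The paper's construction contracts $X_{ab}(C)$ in that case only when $\{c,d\}$ is \emph{bad} for $C$, meaning $(C,M)$ has no proper $\{c,d\}$-coloring — and this badness test (via Theorem~\ref{3colP7}) is exactly what justifies the step. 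Your unconditional contraction can destroy valid precoloring extensions, so $(H,L)$ could be uncolorable even when $P$ has one.

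Second, and more seriously, the converse direction (a coloring of $(H,L)$ yields a precoloring extension of $\tilde{P}$) does not follow from your $L(z_C)$ and one vertex per component. The residual-list-has-size-$\geq 2$ argument is a non sequitur: a connected graph whose vertices all have $2$-element lists is not automatically list-colorable (an odd cycle with lists $\{1,2\}$ everywhere, say). What actually has to be guaranteed is that the set of colors \emph{not} used on $\tilde{X}(C)$ by the $H$-coloring is a \emph{good} set for $C$, i.e.\ not contained in any maximal bad set $Q \in \mathcal{Q}(C)$. Your single vertex $z_C$, adjacent to $\tilde{X}(C)$ and with one list, can only enforce that the unused color set avoids \emph{one} bad set, not the conjunction over all of $\mathcal{Q}(C)$. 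The paper handles this precisely by introducing \emph{one vertex $v(C,Q)$ for each friendly size-two bad set $Q$}, with $L(v(C,Q)) = \{1,2,3,4\} \setminus Q$; each such vertex enforces one of the needed constraints, and the careful trimming of the lists on $\tilde{X}(C)$ (items in claim~(\ref{X(C)}) of the paper) covers the remaining cases. You acknowledge this yourself at the end ("introduce a bounded number of additional vertices of $Z$ per component"), but that acknowledgment is the whole content of the proof; without the good/bad-set machinery and the friendly-set construction there is no proof, and the alternative fix you mention (strengthening $L(z_C)$ by updating) cannot encode a conjunction of "not contained in $Q_i$" constraints in a single list.
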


\begin{proof}
We may assume that $X_0=X^0(P)$. Let $M$ be the list assignment obtained from $M_P$ by updating $Y^*$ from $X_0$. Write $\mathcal{C}=\mathcal{C}(P)$.
For $Q \subseteq \{1,2,3,4\}$ and $C \in \mathcal{C}$, we say that a 
coloring $c$ of $(C,M)$ is a $Q$-coloring if $c(v) \in Q$ for every
$v \in V(C)$. 
Given  $Q \subseteq \{1,2,3,4\}$, we say that $Q$ is \emph{good for $C$}
if $(C,M)$ admits a proper $Q$-coloring, and \emph{bad for $C$} otherwise.
By Theorem~\ref{3colP7},  for every $Q$ with $|Q| \leq 3$, we can  
test in polynomial time if $Q$ is good for $C$.
Let $\mathcal{Q}(C)$ be the set of all inclusion-wise maximal 
bad subsets of $\{1,2,3,4\}$.  Observe that if $Q$ is bad, then all its 
subsets are bad.

Here is another useful property of $\mathcal{Q}(C)$.

\vspace*{-0.4cm}
  \begin{equation}\vspace*{-0.4cm} \label{recolor}
    \longbox{\emph{Let $Q \in \mathcal{Q}(C)$, and let $i \in Q$
be such that no $u \in S \cup X_0$ with $f(u)=i$ has a neighbor in $V(C)$. Then for every $j \in \{1,2,3,4\} \setminus  Q$, we have $(Q \setminus \{i\}) \cup \{j\} \in \mathcal{Q}(C)$.}}
\end{equation}

Suppose not. Let $Q'=Q \setminus \{i\} \cup \{j\}$. Let $c$ be a proper 
$Q'$-coloring
of $(C,M)$. It follows from the definition of $M$ that $i \in M(y)$
for every $y \in V(C)$.
Recolor every vertex $u \in V(C)$ with $c(u)=j$  with color $i$.
This gives a proper $Q$-coloring of $(C,M)$, a contradiction. This proves 
(\ref{recolor}).

\bigskip

First we describe a sequence of neighbor contractions to produce $\tilde{P}$ as
in the definition of a companion triple.
Let $C \in \mathcal{C}$ with $|V(C)|>1$.
Let $\{i,j,k,l\}=\{1,2,3,4\}$ and let 
$X(C) \subseteq X_{ij} \cup X_{kl}$.
We may assume (without loss of generality) that 
$X(C)  \subseteq X_{12} \cup X_{34}$. If $X(C)$
meets both $X_{12}$ and $X_{34}$,
contract the $1,2$-neighbors of $C$, and the $3,4$-neighbors of $C$;
observe that in this case $\tilde{X}(C)=\{x_{12}(C), x_{34}(C)\}$.
If $X(C)$ meets exactly one of $X_{12},X_{34}$, say $X(C) \subseteq X_{12}$,
and $\{3,4\}$ is bad for $C$, contract the $12$-neighbors of $C$.
Repeat this for every $Q \in \mathcal{Q}(C)$; let 
$\tilde{P}=(\tilde{G}, S, X_0, \tilde{X}, Y^*, f)$ be the
resulting precoloring. Observe that $\tilde{X} \subseteq X$.

\vspace*{-0.4cm}
  \begin{equation}\vspace*{-0.4cm} \label{tilde}
    \longbox{\emph{$P$ has a precoloring extension if and only if $\tilde{P}$ has a precoloring extension, and a precoloring extension of $\tilde{P}$
can be converted into a precoloring extension of $P$ in polynomial time. }}
\end{equation}

Since $|\mathcal{C}(P)| \leq |V(G)|$,
it is enough to show that the property of having a precoloring extension,
and the algorithmic property,
do not change when we perform  one step of the construction above.

Let us say that we start with $P_1=(G_1,S, X_0, X_1, Y^*, f)$ and
finish with $P_1=(G_2, S,X_0, X_2, Y^*, f)$.
We claim that in all cases, each of  the sets that is being contracted
(that is, replaced by its image) 
is monochromatic in every precoloring  extension of $P$. 

Let $C \in \mathcal{C}(P)$ with $|V(C)|>1$, such that $P_2$ is obtained
from $P_1$ by contracting neighbors of $C$.
Let $\{i,j,k,l\}=\{1,2,3,4\}$ and let 
$X_1(C) \subseteq X_{ij} \cup X_{kl}$. 
If $\tilde{X}(C)$ meets both $X_{ij}$ and $X_{kl}$, then since
$|V(C)|>1$,  each 
of the sets $X_1(C) \cap X_{ij}$, $X_1(C) \cap X_{kl}$
is monochromatic in every precoloring extension of $P_1$, as required.
So we may assume  that $X_1(C) \subseteq X_{ij}$.
Now $X_1(C)$ is monochromatic in every precoloring extension of $P_1$
because the set $\{k,l\}$ is bad for $C$. This proves the claim.

Now suppose that a set $A$ was contracted to produce its image $a$.
If $P_1$ has a precoloring extension,  we can give $a$ the unique color that 
appears  in $A$, thus constructing an extension of $P_2$. On the other hand, 
if $P_2$ has a precoloring extension, then every vertex of $A$ can be colored 
with the color of $a$. This proves (\ref{tilde}).

\bigskip

Next we define $L: \tilde{X} \rightarrow 2^{[4]}$.
Start with $L(x)=M_{\tilde{P}}(x)$ for every $x \in \tilde{X}$.
Again let $C \in \mathcal{C}$ with $|V(C)|>1$, let 
$\{i,j,k,l\}=\{1,2,3,4\}$, and let
$\tilde{X}(C) \subseteq X_{ij} \cup X_{kl}$.
For every  $Q \in \mathcal{Q}(C)$ such that 
$Q=\{1,2,3,4\} \setminus \{i\}$, update $L$ by removing $i$ from
$L(x)$ for every $x \in X_{ij} \cap \tilde{X}(C)$.

Next assume that $X(C)$ meets both $X_{ij},X_{kl}$ ,
the sets $\{i,k\},\{i,l\}$ are good for $C$,
and the sets $\{j,k\},\{j,l\}$ are bad for $C$.
Update $L$ by removing $i$ from $L(x_{ij}(C))$.

Finally, assume that $X(C)$ meets both $X_{ij},X_{kl}$ ,
the set $\{i,k\}$ is good for $C$,
and the sets $\{i,l\},\{j,k\},\{j,l\}$ are bad for $C$.
Update $L$ by removing $i$ from $L(x_{ij}(C))$
and
by removing $k$ from $L(x_{kl}(C))$.

Now the following holds.

\vspace*{-0.4cm}
  \begin{equation}\vspace*{-0.4cm} \label{X(C)}
    \longbox{\emph{Let $\{1,2,3,4\}=\{i,j,k,l\}$ and let
$C \in \mathcal{C}$ such that $X(C) \subseteq X_{ij} \cup X_{kl}$.
\begin{enumerate}
\item If $\{1,2,3,4\} \setminus \{i\} \in \mathcal{Q}(C)$, then
$i \not \in \bigcup_{x \in \tilde{X}(C)}L(x)$.
\item 
If $\tilde{X}(C)$ meets both $X_{ij}$ and $X_{kl}$ and
$\{i,k\}, \{i,l\}$ are both good for $C$, and
$\{j,k\},\{j,l\}$ are both bad for $C$, then 
$i \not \in  L(x_{ij}(C)) \cup L(x_{kl}(C))$.
\item  If $\tilde{X}(C)$ meets both $X_{ij}$ and $X_{kl}$ and
$\{i,k\}$ is  good for $C$, and
$\{i,l\}, \{j,k\},\{j,l\}$ are  bad for $C$, then 
$i,k \not \in  L(x_{ij}(C)) \cup L(x_{kl}(C))$.
\end{enumerate}
}}
\end{equation}

Next we  show that:

\vspace*{-0.4cm}
  \begin{equation}\vspace*{-0.4cm} \label{lists}
    \longbox{\emph{If $c$ is a precoloring extension of $\tilde{P}$, then 
$c(x) \in L(x)$ for every $x \in \tilde{X}$.}}
\end{equation}

This is clear for $x$ such that $L(x)=M(x)$, so let $x \in \tilde{X}$ be
such that $L(x) \neq M(x)$. Then
there exists $C \in \mathcal{C}$ with $|V(C)|>1$, and
$\{i,j,k,l\}=\{1,2,3,4\}$ with
$\tilde{X}(C) \subseteq X_{ij} \cup X_{kl}$, 
such that $x \in \tilde{X}(C)$.
Suppose that $c(x) \in M(x) \setminus L(x)$.  
Observe that $c|V(C)$ is a coloring of $(C,M)$. 
There are three possible situations in which $c(x)$ could have been removed 
from 
$M(x)$ to produce $L(x)$.

\begin{itemize}
\item $\{1,2,3,4\} \setminus \{i\}$ is bad for $C$, and $x \in X_{ij}$, 
and $c(x)=i$. 
In this case, since $(C,M)$ is not $\{1,2,3,4\} \setminus \{i\}$-colorable, it 
follows that some $v \in V(C)$ has $c(v)=i$, but
$V(C)$ is complete to $\tilde{X}(C)$, a contradiction.
\item  $\tilde{X}(C)$ meets both $X_{ij}$ and $X_{kl}$, the 
sets $\{i,k\},\{i,l\}$ are good for $C$,
the sets $\{j,k\},\{j,l\}$ are bad for $C$, 
$x=x_{ij}(C)$, and $c(x)=i$.
Since $\tilde{X}(C) \cap X_{kl} \neq \emptyset$, it follows that
$c(u) \in \{k,l\}$ for some $u \in \tilde{X}(C)$. Since the sets
$\{j,k\},\{j,l\}$ are bad for $C$ and $|V(C)|>1$, it follows that
$c(v)=i$ for some $v \in V(C)$, but $x_{ij}(C)$ is 
complete to $V(C)$, a contradiction.
\item $\tilde{X}(C)$ meets both $X_{ij}$ and $X_{kl}$, the 
set $\{i,k\}$ is good for $C$,
the sets $\{i,l\}, \{j,k\},\{j,l\}$ are bad for $C$, and
either $x = x_{ij}(C)$ and $c(x)=i$, or $x=x_{kl}(C)$ and $c(x)=l$.
Since $\tilde{X}(C)$ meets both $X_{ij}$ and $X_{ik}$ and $|V(C)|>1$, it 
follows that
$|c(V(C)) \cap \{i,j\}| =1$, and $|c(V(C)) \cap \{k,l\}| =1$.
Since  $\{j,k\},\{j,l\}$ are bad for $C$, it follows that
for some $v \in V(C)$ has $v(c)=i$, and so $c(x_{ij}(C)) \neq i$.
Since $\{i,l\}$ is bad for $C$, it follows that $c(V(C))=\{i,k\}$,
and so $c(x) \neq k$, in both cases a contradiction.
\end{itemize}
This proves (\ref{lists}).

\bigskip

Finally, for every $C \in \mathcal{C}$,  we   construct  the set $h^{-1}(C)$ 
and define $L(v)$ for every $v \in h^{-1}(C)$.

If $|V(C)|=1$, say $C=\{y\}$, let $h^{-1}(C)=\{y\}$, and let
$L(y) = M(y)$. 

Now assume $|V(C)|>1$. 
We may assume that  $\tilde{X}(C) \subseteq X_{12} \cup X_{34}$.

If all subsets of $\{1,2,3,4\}$ of size three are bad, then
set $h^{-1}{C}=\{z\}$ and $L(z)=\emptyset$.
From now on we assume that there is a good subset for $C$  of size at most 
three.

If  $\tilde{X}(C) \subseteq X_{12}$ or 
$\tilde{X}(C) \subseteq X_{34}$, set  $h^{-1}(C)=\emptyset$. 

So we may assume that $\tilde{X}(C)$ meets both $X_{12}$ and $X_{34}$.
If all sets of size two, except $\{1,2\}$ and $\{3,4\}$ are bad for $C$, set 
$h^{-1}{C}=\{z\}$ and $L(z)=\emptyset$.
Next let $Q \in \mathcal{Q}(C)$ with $|Q|=2$; write  $\{i,j,k,l\}=\{1,2,3,4\}$,
and say $Q=\{i,j\}$. 
We say that $Q$ is \emph{friendly} if
there exist $u_i,u_j \in S \cup X_0$,
both with neighbors in $C$, and with $f(u_i)=i$ and $f(u_j)=j$.
For every friendly set $Q$, let $v(C,Q)$ be  a new vertex, and let
$h^{-1}(C)$ consist of all such vertices $v(C,Q)$. 
Set $L(v(C,Q))=\{1,2,3,4\} \setminus Q$.

Let $Z=\bigcup_{C \in \mathcal{C}}h^{-1}(C)$.
Finally, define the correspondence function $h$ by setting 
$h(z)=C$ for every $z \in h^{-1}(C)$ and $C \in \mathcal{C}$.

Now we define $H$. We set $V(H)=\tilde{X} \cup Z$, and
$pq \in E(H)$ if and only if either
\begin{itemize}
\item $p,q \in \tilde{X}$ and $pq \in E(G)$, or
\item there exists $C \in \mathcal{C}$ such that $p \in h^{-1}(C)$ and
$q \in \tilde{X}(C)$.
\end{itemize}

The triple $(H,L,h)$ that we have constructed satisfies the following.
\begin{itemize}
\item $\tilde{X} \subseteq V(H)$; write $Z=V(H) \setminus \tilde{X}$.
\item $N(z)=\tilde{X}(V(h(x)))$ for every $z \in Z$. 
\item $Z$ is a stable set.
\item For every $x \in \tilde{X}$, $L(x) \subseteq M_P(x)=M(x)$.
\item $h: Z \rightarrow \mathcal{C}(P)$.
\item If $z \in Z$ with $L(z) \neq \emptyset$, and $q \in \{1,2,3,4\} \setminus L(z)$, then some 
vertex $V(h(z))$ has a neighbor $u \in S \cup X_0$ with $f(u)=q$.
(This is in fact stronger than what is required in the definition of a companion triple; we will relax this condition later.)
\end{itemize}

To complete the proof of Theorem~\ref{companion}, it remains to show the
following
\begin{enumerate}
\item For every $z \in Z$ and every $q \in L(z)$,  
there is $v \in V(h(z))$ with $q \in M(v)$, and 
no vertex $u \in S \cup X_0$ with $f(u)=q$ is complete to $V(h(z))$.
\item for every $i,j \in \{1,2,3,4\}$, $H|(\tilde{X_{ij}} \cup Z)$ is $P_6$-free.
\item  $P$ has a precoloring extension if and only if $(H,L)$ is 
colorable, and a proper coloring of $(H,L)$ can be converted to a precoloring
extension of $P$ in polynomial time.
\end{enumerate}

We prove the first statement first.
Let $z \in Z$ and $q \in L(z)$, and suppose that for every $v \in V(h(z))$
$q \not \in M(v)$, or some vertex $u \in S \cup X_0$ with $f(u)=q$ is 
complete to $V(h(z))$.
It follows that  $|V(h(Z))|>1$.
Since $z \in Z$, it follows that there exists
a set $\{i,j\} \in \mathcal{Q}(h(Z))$ and 
$L(z)=\{1,2,3,4\} \setminus \{i,j\}$. But now it follows that
$\{q,i,j\}$ is also bad for $h(Z)$, contrary to the maximality of $\{i,j\}$.
This proves the first statement.

Next we prove the second statement. 
By Lemma~\ref{P6free}, $\tilde{G}|(\tilde{X_{ij}} \cup Y^*)$ is $P_6$-free for
every $i,j \in \{1,2,3,4\}$. Suppose $Q$ is a $P_6$ in $H$. 
Let $C \in \mathcal{C}(P)$. Since  no vertex of 
$V(H) \setminus h^{-1}(C)$ is mixed on $h^{-1}(C)$, it follows that
$|V(Q) \cap h^{-1}(C)| \leq 1$. Moreover, 
$\tilde{X}_{ij}(h^{-1}(C))= \tilde{X}_{ij}(C)$. Let $G'$ be obtained
from $\tilde{G}$ by replacing each $C \in \mathcal{C}$ by a single vertex
of $C$, choosing this vertex to be in $V(Q)$ if possible.  Then $G'$ is an 
induced subgraph of $G$, and $Q$ is a $P_6$ in $G'$, a contradiction.
This proves the second statement.

Finally we prove the last statement.
Let
$\mathcal{C}_1=\{C \in \mathcal{C} \; : \;  |V(C)|=1\}$, and
let $Y=\bigcup_{C \in \mathcal{C}_1}V(C)$. Then $Y \subseteq Z$.

Suppose first that $P$ has a precoloring extension. By~(\ref{tilde}),
there exists a precoloring extension of $\tilde{P}$; denote it by $c$.
By~(\ref{lists}), $c|(\tilde{X} \cup Y)$ is a coloring of $(H|(X \cup Y),L)$.
It remains to show that $c$ can be extended to $Z \setminus Y$. 
Let $z \in Z$, and let $h(z)=C$. Then there is a friendly 
set $\{i,j\} \in \mathcal{Q}$ such that $z=v(C,Q)$. Since 
$Z$ is a stable set, in order to show that $c$ can be extended to 
$Z \setminus Y$, it
is enough to show that 
$$L(z) \not \subseteq c(\tilde{X}(C)).$$
Since $L(v(C,Q))=\{1,2,3,4\} \setminus Q$, it is enough to show that 
$$\{1,2,3,4\} \setminus c(\tilde{X}(C)) \not \subseteq Q.$$
But the latter statement is true because 
$$ c(V(C)) \subseteq \{1,2,3,4\} \setminus c(\tilde{X}(C))$$
 and 
$c(V(C))$
is a good set, and therefore $c(V(C)) \not \subseteq Q$.
This proves that if $\tilde{P}$ has a precoloring extension, then $(H,L)$ is
colorable.

Now let $c$ be a proper coloring of $(H,L)$. By (\ref{tilde}) it is enough to 
show that $\tilde{P}$ has a precoloring extension.
We define a precoloring extension 
$\tilde{c}$ of $\tilde{P}$. Set $\tilde{c}(v)=f(v)$ for every 
$v \in S \cup X_0$,
and $\tilde{c}(x)=c(x)$ for every $x \in \tilde{X} \cup Y$. It follows from 
the definition of $L$ that 
$\tilde{c}$ is a precoloring extension of 
$(\tilde{G} \setminus (Y^* \setminus Y), S, X_0, \tilde{X},Y)$.

Let $C \in \mathcal{C}$ with $|V(C)|>2$. We extend $\tilde{c}$ to $C$.
We will show that  for every $Q \in \mathcal{Q}(C)$,
$\{1,2,3,4\}  \setminus c(\tilde{X}(C)) \not \subseteq Q$.
Consequently $T=\{1,2,3,4\}  \setminus c(\tilde{X}(C))$ is good for $C$.
Since some vertex of $S \cup X_0 \cup \tilde{X}$ is complete to $V(C)$,
it follows that $|T| \leq 3$. Therefore 
we can define 
$\tilde{c} : V(C) \rightarrow \{1,2,3,4\}$ to be a proper $T$-coloring of $(C,M)$,
which can be done in polynomial time by Theorem~\ref{3colP7}.

So suppose that there is $Q \in \mathcal{Q}(C)$ such that
$\{1,2,3,4\} \setminus c(\tilde{X}(C)) \subseteq Q$.
Then $\{1,2,3,4\} \setminus Q  \subseteq c(\tilde{X}(C))$.
By (\ref{X(C)}.1), $|Q|<3$. 

We may assume that $\tilde{X}(C) \subseteq X_{12} \cup X_{34}$.
Suppose first that $\tilde{X}(C)$ meets both $X_{12}$ and $X_{34}$,
and so $\tilde{X}(C)=\{x_{12}(C),x_{34}(C)\}$. Then $|c(\tilde{X}(C))|=2$,
and so $|Q| \neq 1$. Therefore  may assume that
$|Q|=2$. If $Q$ is friendly, then $c(v(C,Q)) \not \in Q$, 
and so $\{1,2,3,4\} \setminus Q  \not \subseteq  c(\tilde{X}(C))$, so
we may assume that $Q$ is not friendly. By symmetry, we may assume that
$Q \in\{\{1,2\},\{1,3\}\}$. If $Q=\{1,2\}$, then since
$L(x_{12}(C)) \subseteq \{1,2\}$, it follows that
$\{1,2,3,4\} \setminus Q  \not \subseteq  c(\tilde{X}(C))$, so we may 
assume that $Q=\{1,3\}$.

Suppose first that for every $i \in Q$, there is no vertex $u \in S \cup X_0$
with $c(u) =i$ and such that $u$ has a neighbor in $V(C)$. Now (\ref{recolor})
implies that every set of size two is bad for $C$. 
Therefore  $h^{-1}(C)=\{z\}$ and $L(z)=\emptyset$, contrary to the fact that
$c$ is a proper coloring of $(H,L)$. 

We may assume from symmetry that
\begin{itemize}
\item there is a vertex $u \in S \cup X_0$
with $c(u) =1$ and such that $u$ has a neighbor in $V(C)$.
\item there is no vertex $u \in S \cup X_0$
with $c(u) =3$ and such that $u$ has a neighbor in $V(C)$.
\end{itemize}

Now by (\ref{recolor})   all the
sets sets $\{1,2\},\{1,3\},\{1,4\}$ are bad.
If the only good set is $\{3,4\}$, then $L(z)=\emptyset$, 
contrary to the fact that $c$ is a coloring of $(H,L)$.
Therefore,  at least one of $\{2,3\},\{2,4\}$ is good,  and (\ref{X(C)}.2)
and (\ref{X(C)}.3) imply that 
$2 \not \in  L(u)$ for every $u \in \tilde{X}(C)$,  contrary to the fact that
$2 \in \{1,2,3,4\} \setminus Q  \subseteq c(\tilde{X})$.
This proves that not both $\tilde{X}(C) \cap X_{ij}$ and
$\tilde{X}(C) \cap X_{kl}$ are non-empty.

We may assume that $\tilde{X}(C) \subseteq X_{12}$. Then 
$c(\tilde{X}(C)) \subseteq \{1,2\}$, and so $3,4 \in Q$. Since $|Q|<3$, 
we have $Q=\{3,4\}$. 
It follows  from the construction of $\tilde{G}$ that $|\tilde{X}(C)| \leq 1$, 
contrary to the fact that
$\{1,2,3,4\} \setminus Q  \subseteq \bigcup_{u \in X(C)}\{c(u)\}$.
This completes the proof of the second statement, and 
Theorem~\ref{companion} follows.
\end{proof}

\section{Insulating cutsets} \label{insulatingcutsets}

Our next goal is to transform companion triples further, restricting them in 
such a way that we can test colorability.

Let $H$ be a graph and let $L$ be a $4$-list assignment for $H$.  We
say that $D \subseteq V(H)$ is a \emph{chromatic cutset} in $H$ if
$V(H)=A \cup B \cup D$, $A \neq \emptyset$, and $a \in A$ is adjacent
to $b \in B$ only if $L(a) \cap L(b)=\emptyset$. For
$i,j \in \{1,2,3,4\}$ let
$D_{ij}=\{d \in D \; : \; L(d) \subseteq \{i,j\}\}$. The set $A$ is
called the \emph{far side} of the chromatic cutset.  We say that a
chromatic cutset $D$ is \emph{$12$-insulating} if
$D=D_{12} \cup D_{34}$ and for every
$\{p,q\} \in \{\{1,2\}, \{3,4\}\}$ and every component $\tilde{D}$ of
$H|D_{pq}$ the following conditions hold.
\begin{itemize}
\item $\tilde{D}$ is bipartite; let $(D_1,D_2)$ be the bipartition.
\item $|L(d)|=|L(d')|$ for every $d,d' \in D_1 \cup D_2$. 
\item There exists $a \in A$ with  a neighbor in $\tilde{D}$ and with
$L(a) \cap \{p,q\} \neq \emptyset$.
\item Suppose that $|L(d)|=2$ for every $d \in V(\tilde{D})$.
Write $\{i,j\}=\{p,q\}$ and let $\{s,t\}=\{1,2\}$.
If $a \in A$ has a neighbor in $d \in D_s$ and $i \in L(a)$, and 
$b \in B$ has a neighbor in $\tilde{D}$, then 
\begin{itemize}
\item if $b$ has a neighbor in $D_s$, then $j \not \in L(b)$, and
\item if $b$ has a neighbor in $D_t$, then $i \not \in L(b)$.
\end{itemize}
\end{itemize}
Insulating cutsets are useful for the following reason.
We say that a component $\tilde{D}$ of $H|D_{pq}$ is \emph{complex} if 
$|L(d)|=2$ for every $d \in V(\tilde{D})$.

\begin{theorem}
\label{insulating}
Let $D$ be a $12$-insulating chromatic cutset in $(H,L)$, and let $A,B$ be as 
in the  definition of an insulating cutset. Let $D'$ be the union of the vertex 
sets of complex components of  $H|D_{12}$ and of $H|D_{34}$, and let 
$D''=D \setminus D'$.
If $(H|(B \cup D''),L)$ and $(H \setminus B, L)$
are both colorable, then $(H,L)$ is colorable.
Moreover, given proper colorings of $(H|(B \cup D''),L)$ and 
$(H \setminus B, L)$, a proper coloring of $(H,L)$ can be found in polynomial time.
\end{theorem}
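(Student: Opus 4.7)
The plan is to glue together the two given colorings. Let $c_A$ be a proper $L$-coloring of $H\setminus B=H|(A\cup D)$ and let $c_B$ be a proper $L$-coloring of $H|(B\cup D'')$. First I would observe that $c_A$ and $c_B$ must agree on $D''$: every component of $H|D_{pq}$ meeting $D''$ is non-complex, so by the second bullet of the $12$-insulating definition every vertex on such a component has $|L(d)|=1$, forcing the color. Thus setting $c(x)=c_A(x)$ for $x\in A\cup D$ and $c(x)=c_B(x)$ for $x\in B$ gives a well-defined function, and the goal is to show that, after a possible modification of $c_A$ on the complex components, $c$ is a proper $L$-coloring of $H$.

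Verifying properness reduces to a single non-trivial case: edges inside $A\cup D$ are handled by $c_A$, edges inside $B$ by $c_B$, edges between $A$ and $B$ are automatic because $L(a)\cap L(b)=\emptyset$ by the chromatic-cutset property, and edges between $D''$ and $B$ are handled by $c_B$. The only possibly bad edges are between $B$ and a complex component $\tilde{D}$ of $H|D_{pq}$, on which $c_A$ chooses one of two orientations ($D_1\to p,\,D_2\to q$ or its reverse) that may disagree with the color $c_B$ assigns to adjacent $B$-vertices. I would define the \emph{$B$-compatible orientation} on $\tilde{D}$ as follows: apply the third bullet of the insulating definition to obtain $a\in A$ adjacent to $\tilde{D}$ with some $i\in L(a)\cap\{p,q\}$, let $s\in\{1,2\}$ be a side in which $a$ has a neighbor, and set $\{j\}=\{p,q\}\setminus\{i\}$; the fourth bullet then forces $j\notin L(b)$ for every $b\in B$ adjacent to $D_s$ and $i\notin L(b)$ for every $b\in B$ adjacent to $D_t$, so the orientation $D_s\to j,\ D_t\to i$ is consistent with the color $c_B$ assigns to every $b\in B$ adjacent to $\tilde{D}$.

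The key claim is that whenever some $a'\in A$ adjacent to $\tilde{D}$ has $c_A(a')\in\{p,q\}$, the orientation of $\tilde{D}$ chosen by $c_A$ already coincides with the $B$-compatible one: if $c_A(a')=i'\in\{p,q\}$ and $a'$ has a neighbor in $D_{s'}$, then properness of $c_A$ forces that neighbor to be colored $j'=\{p,q\}\setminus\{i'\}$, which is exactly what the fourth bullet applied with $(a',s',i')$ demands. Consequently, whenever $c_A$'s orientation on $\tilde{D}$ differs from the $B$-compatible one, no $a\in A$ adjacent to $\tilde{D}$ has $c_A(a)\in\{p,q\}$, and I can simply swap $p\leftrightarrow q$ on all of $\tilde{D}$ without disturbing anything else: edges inside $\tilde{D}$ remain proper, edges to $A$-vertices are safe because every $A$-endpoint has color outside $\{p,q\}$, and edges to other parts of $D$ go to $D_{p'q'}$ with $\{p',q'\}\neq\{p,q\}$ and are automatic because the color sets are disjoint.

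The main obstacle will be verifying the key claim, in particular checking that the $B$-compatible orientation is well-defined (different choices of $a$ and $i$ in the third bullet yield the same orientation, which follows because any inconsistency would make both orientations fail to be $B$-compatible, contradicting the existence of $c_B$) and that the local swap does not reintroduce a conflict with $c_B$. Both points reduce to a careful bookkeeping with the third and fourth bullets. After performing the local swap on every complex component where it is needed, I obtain a new proper $L$-coloring $c_A'$ of $A\cup D$ whose orientation on each complex component is $B$-compatible; combining $c_A'$ on $A\cup D$ with $c_B$ on $B$ then yields the desired proper coloring of $(H,L)$. The procedure runs in polynomial time, since it visits each complex component once and performs only a linear swap on each.
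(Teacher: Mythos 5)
Your proposal is essentially the same argument as the paper's: both resolve mismatches between $c_A$ and $c_B$ on complex components of $D$ by swapping the two colors on any such component $\tilde{D}$, and both use the fourth bullet of the $12$-insulating definition to show that the swap cannot create a conflict. The only structural difference is that the paper phrases it as an iterative conflict-reduction procedure (find a conflicting pair, swap, show the number of conflicts strictly decreases), while you classify each complex component up front and do all the swaps in one pass; these are equivalent since, as your key claim shows, a swap on $\tilde{D}$ does not disturb any other component.

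One small criticism: your parenthetical justification that the ``$B$-compatible orientation'' is well-defined --- that different choices of $(a,s,i)$ must yield the same orientation ``because any inconsistency would make both orientations fail to be $B$-compatible, contradicting the existence of $c_B$'' --- is not correct, since $c_B$ is a coloring of $H|(B \cup D'')$ and therefore assigns no colors to the complex component $\tilde{D}$, so its existence places no constraint whatsoever on orientations of $\tilde{D}$. In fact well-definedness can fail: if one witness $a$ is adjacent to $D_s$ with $i \in L(a)$ and another $a'$ is adjacent to $D_s$ with $j \in L(a')$ (or to $D_t$ with $i \in L(a')$), the fourth bullet applied to both forces $L(b) \cap \{p,q\} = \emptyset$ for every $b \in B$ adjacent to $\tilde{D}$, so the two ``$B$-compatible orientations'' genuinely differ. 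But this is harmless: in that case both orientations are compatible with any coloring of $B$, so your key claim and the swap step still go through verbatim. You can sidestep the issue entirely, as the paper does, by not introducing the $B$-compatible orientation at all: just note that if some $a' \in A$ adjacent to $\tilde{D}$ has $c_A(a') = i' \in \{p,q\}$ with a neighbor in $D_{s'}$, then applying the fourth bullet with $(a',s',i')$ directly shows $c_A$'s orientation on $\tilde{D}$ already agrees with $L|B$, and if no such $a'$ exists, swapping is free and the third plus fourth bullets supply one orientation that agrees with $L|B$.
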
 

\begin{proof}
Let $c_1$ be a proper coloring of $(H|(B \cup D''),L)$ and let $c_2$ be a 
proper coloring of  $(H \setminus B, L)$.

A \emph{conflict} in $c_1,c_2$ is a pair of adjacent vertices $u,v$ such that
$c_1(u)=c_2(v)$. Since $c_1,c_2$ are both proper colorings and $D$ is a 
chromatic cutset,
and $|L(d)|=1$ for every $d \in D''$, we deduce that 
every conflict involves one vertex of $D'$ and one vertex of $B$.
Below we describe a polynomial-time procedure that modifies $c_2$ to reduce 
the number of conflicts (with $c_1$ fixed).

Let $u \in D'$ and $v \in B$ be a conflict.
Then $uv \in E(H)$ and $c_1(u)=c_2(v)$.
Let $\tilde{D}$ be the component of
$G|D$ containing $u$. Then $V(\tilde{D}) \subseteq D'$ and
$\tilde{D}$ is bipartite; let $(D_1,D_2)$ be the 
bipartition of $\tilde{D}$. We may assume that $u \in D_1$. We may also
assume that $L(d)=\{1,2\}$ for every $d \in V(\tilde{D})$, and that 
$c_1(u)=c_2(v)=2$. Since $L(d)=\{1,2\}$ for every $d \in V(\tilde{D})$, it 
follows that for every $i \in \{1,2\}$ and $d \in D_i$, we have $c_2(d)=i$. 
Let $c_3$ be obtained from $c_2$ by setting
$c_3(d)=1$ for every $d \in D_2$; $c_3(d)=2$ for every $d \in D_1$; and
$c_3(d)=c_2(d)$ for every $w \in (A \cup D) \setminus (D_1 \cup D_2)$.
(This modification can be done in linear time).

First we show that $c_3$ is a proper coloring of $(H \setminus B, L)$. 
Since $L(d)=\{1,2\}$ for every $d \in V(\tilde{D})$, 
$c_3(v) \in L(v)$ for every $v \in A \cup D$.
Suppose there exist adjacent $xy \in D \cup A$ such that
$c_3(x)=c_3(y)$. Since $\tilde{D}$ is a component of $H|D$, 
we may assume that $x \in D_1 \cup D_2$ and $y \in A$. 
Suppose first that $x \in D_1$. Then $c_3(y)=c_3(x)=2$,
and so $2 \in L(y)$  and $y$ has a neighbor in $D_1$. But  
$v \in B$ has a  neighbor in $D_1$ and $1 \in L(v)$, which is a contradiction.
Thus we may assume that $x \in D_2$. Then $c_3(y)=c_3(x)=1$,
and so $1 \in L(y)$ and $y$ has a neighbor in $D_2$. But $v \in B$
has a neighbor in $D_1$, and $1 \in L(b)$, again a contradiction. This proves
that $c_3$ is a proper coloring of $(H \setminus B, L)$.

Clearly $u,v$ is not a conflict in $c_1,c_3$. We show that no new conflict 
was created. Suppose that there is a new conflict, namely there exist
adjacent $u' \in D'$ and $v' \in B$ such that 
$c_1(v')=c_3(u')$, but $c_1(v') \neq c_2(u')$. 
Then $u' \in V(\tilde{D})$.
If $u' \in D_1$, then both $v$ and $v'$ have neighbors in $D_1$,
and $1 \in L(v)$, and $2 \in L(v')$; if $u' \in D_2$, then 
$v$ has a neighbor in $D_1$ and $v'$ has a neighbor in $D_2$, 
and $1 \in L(v') \cap L(v)$; and in both cases we get a contradiction.
Thus the number of conflicts in $c_1,c_3$ was reduced.

Now applying this procedure at most $|V(G)|^2$ times we obtained a proper
coloring $c_1'$ of  $(H|(B \cup D''),L)$ and a proper coloring $c_2'$ of 
$(H \setminus B, L)$ such that there is no conflict in $c_1',c_2'$.
Now define $c(v)=c_1'(v)$ if $v \in B \cup D''$
and $c(v)=c_2'(v)$ if $v \in V(H) \setminus B$; then $c$ is a proper coloring 
of  $(H,L)$. This proves Theorem~\ref{insulating}. \end{proof}

Let $G$ be a $P_6$-free graph, let $P=\esspc$ be an orthogonal excellent 
starred    precoloring of $G$, and let $(H, L, h)$ be a companion triple 
for $P$. Let $\{i,j,k,l\}=\{1,2,3,4\}$. Let 
$Z^{ij}=\{z \in Z \; : \; N(z) \cap \tilde{X} \subset X_{ij} \cup X_{kl}\}$.
It follows from the definition of a companion triple that
$Z^{ij}=Z^{kl}$ and that  $Z=\bigcup_{i,j \in \{1,2,3,4\}}Z^{ij}$.
Next we prove a lemma that will allow us to replace a companion triple for
$P$ with a polynomially sized collection of near-companion triples for $P$,
each of which has a useful insulating cutset. We will apply this lemma 
several times, and so we need to be able to apply it to near-companion triples 
for $P$, as well as to companion triples.

\begin{lemma} 
\label{insulatinglemma} 
There is function $q: \mathbb{N} \rightarrow \mathbb{N}$ such that the
following holds.  Let $G$ be a $P_6$-free graph, let $P=\esspc$ be an
orthogonal excellent starred precoloring of $G$, and let $(H, L, h)$
be a near-companion triple for $P$. Then there is an algorithm with
running time $O(|V(G)|^{q(|S|)})$ that outputs a collection
$\mathcal{L}$ of $4$-list assignments for $H$ such that
\begin{itemize}
\item $|\mathcal{L}| \leq |V(G)|^{q(|S|)}$;
\item if $L' \in \mathcal{L}$ and $c$ is a proper coloring of $(H,L')$, then
$c$ is a proper coloring of $(H,L)$; and
\item if $(H,L)$ is colorable, then there exists $L' \in \mathcal{L}$ such that
$(H,L')$  is colorable.
\end{itemize}
Moreover, for every  $L' \in \mathcal{L}$,
\begin{itemize}
\item $L'(v) \subseteq L(v)$ for every $v \in V(H)$;
\item $(H,L',h)$ is a near companion triple for $P$; 
\item if for some $i,j \in \{1,2,3,4\}$ $(H,L)$ has an $ij$-insulating cutset 
$D'$ with far side $Z^{ij}$, then  $D'$ is an $ij$-insulating cutset with far side $Z^{ij}$
in $(H,L',h)$; and
\item $(H,L')$ has a $12$-insulating cutset $D \subseteq \tilde{X}$ 
with far side $Z^{12}$.
\end{itemize}
\end{lemma}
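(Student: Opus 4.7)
The plan is to set $A = Z^{12}$, $D = N_H(A) \cap \tilde X$, and $B = V(H) \setminus (A \cup D)$. Since every $z \in Z^{12}$ has its $\tilde X$-neighbors in $X_{12} \cup X_{34}$ by definition of $Z^{12}$, and since $Z$ is a stable set, we obtain $D \subseteq X_{12} \cup X_{34}$, so $D = D_{12} \cup D_{34}$ with $D_{pq} \subseteq X_{pq}$, and there are no $A$--$B$ edges at all; consequently the chromatic-cutset condition holds vacuously.

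I would next verify the structural conditions component by component. Fix a component $\tilde D$ of $H|D_{12}$ (the $D_{34}$ case being symmetric). If $\tilde D$ is not bipartite then $(H,L)$ is uncolorable, since all lists in $\tilde D$ are contained in $\{1,2\}$; in that case $\mathcal{L}$ can consist of a single uncolorable assignment. Otherwise, let $(D_1,D_2)$ be the bipartition, and propagate any existing singleton list through $\tilde D$: the presence of $d \in D_1$ with $L(d) = \{1\}$ forces every list on $D_1$ to $\{1\}$ and every list on $D_2$ to $\{2\}$, so after one sweep each $\tilde D$ is either entirely singletons or entirely size two (complex), which handles the uniform-list-size condition. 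For the witness condition, if no $a \in A$ adjacent to $\tilde D$ has $L(a) \cap \{1,2\} \neq \emptyset$, I would reclassify $\tilde D$ as part of $B$; this is safe, because every $a$--$d$ edge between the reclassified $\tilde D$ and $A$ then satisfies $L(a) \cap L(d) = \emptyset$, so the chromatic-cutset condition is preserved.

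The genuinely hard step is enforcing the complex-component condition. For each remaining complex $\tilde D$ the definition demands that certain colors be removed from $L(b)$ for each $b \in B$ adjacent to $\tilde D$; applying the removals deterministically may destroy colorability of $(H,L)$ because the original coloring could assign a now-forbidden color at some such $b$. The plan is therefore to branch on a global orientation pattern for complex components so that each branch chooses orientations making the induced $L(b)$-restrictions jointly compatible with some coloring. The main obstacle is that branching on each complex component independently would produce exponentially many branches; the key tool will be the $P_6$-freeness of each $H|(Z \cup \tilde X_{pq})$ together with Lemma~\ref{Sophiespath} and Lemma~\ref{Sophiespath2}, which I expect to imply that in a colorable $(H,L)$ the orientations of all complex components are determined by a bounded (in $|S|$) amount of data, yielding the required $|V(G)|^{q(|S|)}$ bound.

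Finally, preservation of the near-companion-triple axioms holds because $H$ and $h$ are unchanged and all list updates only shrink lists inside $\tilde X \cup Z$ subject to the propagation rules of step two. Preservation of any previously-existing $ij$-insulating cutset with far side $Z^{ij}$ follows because the list modifications are confined to components of $H|D_{12}$, $H|D_{34}$ and their $B$-neighbors, and these updates interact with a previous $ij$-insulating cutset only in ways compatible with its defining conditions.
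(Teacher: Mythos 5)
Your setup of the cutset $A=Z^{12}$, $D=N_H(A)\cap\tilde X$, $B=V(H)\setminus(A\cup D)$ is reasonable and the observation that there are no $A$--$B$ edges is correct, as is the cleaning-up of singleton lists and the reclassification of witness-free components into $B$. But the proposal has a genuine gap precisely where you flag it as ``the genuinely hard step'': you do not actually produce the polynomial branching, you only hypothesize that $P_6$-freeness ``should'' bound the number of branches by $|V(G)|^{O(1)}$, appealing to Lemmas~\ref{Sophiespath} and \ref{Sophiespath2}, which the paper does not in fact use here. The paper's actual mechanism is quite specific and you have not reconstructed it: fix a coloring $c$ of $(H,L)$; partition $\tilde X$ by type (at most $2^{|S|}$ types $T_i$); for each type, guess the single vertex $q_i\in\tilde X(T_i)$ that both receives its $\{1,2\}$-side color under $c$ and has the inclusion-maximal set of ``$12$-grandchildren'' $G_{12}(q_i)$ (and likewise a $p_i$ on the $\{3,4\}$ side); this gives $|V(G)|^{2m}$ branches with $m\le 2^{|S|}$. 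Once $q_i$ is guessed, every other $x\in\tilde X(T_i)$ whose grandchild set strictly contains $G_{12}(q_i)$ must, by the maximality of $q_i$, not use its $\{1,2\}$-side color, so that color can be deterministically removed from $L(x)$. After exhaustive updating, the monotonicity of the grandchild sets (for vertices of a fixed type) is exactly what makes the surviving complex components obey the orientation constraints in the last bullet of the insulating-cutset definition; verifying this uses Lemma~\ref{P6free} and Lemma~\ref{keycompanion}, not the Sophies-path lemmas.

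So the correct ``bounded data'' you were looking for is one distinguished vertex per type per side, not an orientation pattern on complex components, and the branching is over $\tilde X$, not over $Z$ or the components of $H|D_{12}$. Your proposal identifies the obstacle correctly but leaves the central construction and its correctness proof entirely unspecified; as written it does not establish the lemma.
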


\begin{proof}
 Let $\tilde{P}=(\tilde{G}, S,X_0, \tilde{X}, Y^*, f)$ be as in the
definition of a near-companion triple. 
Assume that $Z^{12} \neq \emptyset$. If one of the graphs
$\tilde{G}|\tilde{X}_{12}$ and $\tilde{G}|\tilde{X}_{34}$ is not   bipartite,
set $\mathcal{L}=\emptyset$. From now on
we assume that $\tilde{G}|\tilde{X}_{12}$ and $\tilde{G}|\tilde{X}_{34}$ are   
bipartite.
We may assume that $X_0=X^0(\tilde{P})$.
Let $T_1, \ldots, T_p$ be types of $\tilde{X}$ with
$|L_P(T_i)|=2$ and such that $|L_P(T_i) \cap \{1,2\}|=1$. It follows that
$|L_P(T_i) \cap \{3,4\}|=1$.
Let $\mathcal{Q}$ be the set of all $2m$-tuples $Q=(Q_1, \ldots, Q_m,P_1, \ldots, P_m)$ such that
\begin{itemize}
\item $|Q_i| \leq 1$, $Q_i \subseteq \tilde{X}(T_i)$, and if $Q_i=\{q\}$,
then $L(q) \cap \{1,2\} \neq \emptyset$.
\item  $|P_i| \leq 1$, $P_i \subseteq \tilde{X}(T_i)$, and if $P_i=\{p\}$,
then $L(p) \cap \{3,4\} \neq \emptyset$.
\end{itemize}

For $x \in \tilde{X} \setminus (X_{12} \cup X_{34})$ and $z \in Z^{12}$ we say 
that $z$ is a \emph{$12$-grandchild} of
$x$ if there is a component $C$ of $\tilde{X}_{12}$ such that both 
$x$ and $z$ have neighbors in $V(C)$; a \emph{$34$-grandchild}  is 
defined similarly. Let $G_{12}(x)$ be the set of $12$-grandchildren of $x$;
define $G_{34}(x)$ similarly.

We define a $4$-list assignment $L'_Q$ for $H$.  Start with $L_Q'=L$.
For every $i \in \{1, \ldots, m\}$, proceed as follows.
If $|Q_i|=1$, say $Q_i=\{q_i\}$, set $L_Q'(q_i)$ to be the unique element of
$L(q_i) \cap \{1,2\}$. For every $x \in \tilde{X}(T_i)$ such that 
$G_{12}(q_i) \subset G(x)$ and $G_{12}(x) \setminus G_{12}(q_i) \neq \emptyset$,
update $L'_Q(x)$ by 
removing from it the  unique element of $L(x) \cap \{1,2\}$. 
Next assume that  $Q_i = \emptyset$. In this case, for every 
$x \in \tilde{X}(T_i) \setminus \{q_i,p_i\}$  
such that $x$ has a grandchild, update $L'_Q(x)$ by removing from it the 
unique element of $L(x) \cap \{1,2\}$. 

If $|P_i|=1$, say $P_i=\{p_i\}$, set $L_Q'(p_i)$ to be the unique element of
$L(p_i) \cap \{3,4\}$. 
For every $x \in \tilde{X}(T_i)$ such that 
$G_{34}(p_i) \subset G(x)$ and $G_{12}(x) \setminus G_{12}(p_i) \neq \emptyset$, 
update $L'_Q(x)$ by 
removing from it the  unique element of $L(x) \cap \{3,4\}$. 
Next assume that  $P_i = \emptyset$. In this case, for every $x \in X(T_i) \setminus \{p_i,q_i\}$ 
such that some component of $H|\tilde{X}_{34}$ contains both a neighbor of $x$ 
and a neighbor of a vertex in $Z^{12}$, update $L'_Q(x)$ by removing from it the 
unique element of $L(x) \cap \{3,4\}$.

If some vertex $z \in \tilde{X} \setminus \tilde{X}_{12}$ has 
neighbors  on both sides of the bipartition
of a component of $H|(\tilde{X}_{12})$, set 
$L'_Q(z)=L(z) \setminus \{1,2\}$.
If some vertex $z \in \tilde{X} \setminus \tilde{X}_{34}$ has neighbors 
on both sides of the bipartition
of a component of $H|(\tilde{X}_{34})$, set 
$L'_Q(z)=L(z) \setminus \{3,4\}$.
Finally, set $L'_Q(v)=L(v)$ for every other $v \in V(H)$ not yet specified.
Now let $L_Q$ be obtained from $L_Q'$ by updating exhaustively from 
$\bigcup_{i=1}^m (P_i \cup Q_i)$.

We need to check the following statements.
\begin{enumerate}
\item $L_Q(v) \subseteq L(v)$ for every $v \in V(H)$.
\item $(H,L_Q,h)$ is a near-companion triple of $P$.
\item If for some $i,j \in \{1,2,3,4\}$ $(H,L)$ has an $ij$-insulating cutset $D'$ with far side $Z^{ij}$, then  $D'$ is an $ij$-insulating cutset with far side $Z^{ij}$
in $(H,L_Q)$.
\item $(H,L_Q)$ has a $12$-insulating cutset with far side $Z^{12}$.
\end{enumerate} 

Clearly $L_Q(v) \subseteq L(v)$ for every $v \in V(H)$, and consequently
it is routine to check that the third statement holds, and that
in order to prove the  second statement it is sufficient to prove the following:

\vspace*{-0.4cm}
  \begin{equation}\vspace*{-0.4cm} \label{statement3}
    \longbox{\emph
{Set $f(x)=L_Q(x)$ for every  $x \in X^0(L_Q)$. 
Then for every $z \in Z$ with $L(z) \neq \emptyset$ and 
$q \in \{1,2,3,4\}$ such that  $q \not \in L_Q(z)$, there is
a vertex in $h(z)$ that has a neighbor $u \in S \cup X_0 \cup X^0(L_Q)$ with  
$f(u)=q$.}}
\end{equation}

We now prove this statement.
Let $z \in Z$ and 
$q \in \{1,2,3,4\}$ such that  $q \not \in L_Q(z)$.
We need to show that  there is
a vertex in $h(z)$ that has a neighbor $u \in S \cup X_0 \cup X^0(L')$ with 
$f(u)=q$. If $q \not \in L(z)$, the claim follows from the fact
that $(H,L,h)$ is a near-companion triple for $P$, so we may assume that 
$q \in L(z)$, and therefore
$z$ has a neighbor $u$ in $X^0(L_Q)$ with $f(u)=q$. Since  $Z$ is stable, it
follows that  $u \in \tilde{X}$, and therefore, by the definition
of a companion triple, $u$ is complete to $V(h(z))$. This proves (\ref{statement3}).

\bigskip

Finally, we prove that $(H,L_Q)$ has a $12$-insulating cutset with far side $Z^{12}$.
Let $D^1, \ldots, D^t$ be the components of
$H|\tilde{X}_{12}$
that contain a vertex $x$ such that $x$ has a neighbor 
$z$ in $Z^{12}$ with $L_Q(x) \cap L_Q(z) \neq \emptyset$.
Let $F^1, \ldots, F^s$ be defined similarly for $\tilde{X}_{34}$.
Let $D=X^0(L_Q) \cup \bigcup_{i=1}^t V(D_i) \cup \bigcup_{j=1}^w V(F_j)$.
We claim that $D$ is the required cutset.
Clearly $D$ is a
chromatic cutset, setting the far side to be $Z^{12}$ and $B=V(H) \setminus (A \cup D)$, and
the first two bullets of the definition of an insulating cutset are satisfied.
Let $\tilde{D} \in \{D_1, \ldots, D_t\}$ (the argument is symmetric for
$F_1, \ldots, F_s$).  
We need to check the following properties. 
\begin{itemize}
\item \emph{$\tilde{D}$ is bipartite.}\\
This follows from the fact that $\tilde{G}|\tilde{X}_{ij}=H|\tilde{X}_{ij}$
is bipartite. Let $(D_1,D_2)$ be the bipartition of $\tilde{D}$.
\item \emph{$|L(d)|=|L(d')|$ for every $d,d' \in D_1 \cup D_2$.}\\
Since $L(d) \subseteq \{1,2\}$ for every $d \in V(\tilde{D})$, and since
we have updated exhaustively, it follows that if $V(\tilde{D})$ meets
$X^0(L_Q)$, then $V(\tilde{D}) \subseteq X^0(L_Q)$.
\item \emph{There exists $a \in A$ with  a neighbor in $\tilde{D}$ and with
$L(a) \cap \{1,2\} \neq \emptyset$.}\\
This follows immediately from the definition of $D$.
\item \emph{Suppose that $|L(d)|=2$ for every $d \in V(\tilde{D})$.
We need to check that  for $\{i,j\}=\{1,2\}$, if $a \in A$ has a neighbor in $d \in D_1$ and 
$i \in L_Q(a)$, and $b \in B$ has a neighbor in $\tilde{D}$, then 
\begin{itemize}
\item if $b$ has a neighbor in $D_1$, then $j \not \in L_Q(b)$, and
\item if $b$ has a neighbor in $D_2$, then $i \not \in L_Q(b)$.
\end{itemize}}
\end{itemize}

We now check the condition of the last bullet.
Let $a \in A$ have a neighbor $d \in D_1$ and $1 \in L_Q(a)$. Suppose $b \in B$
has a neighbor in $D_1 \cup D_2$, and violates the conditions above. 
It follows from the definition of $Z^{12}$ and $B$ that $b \in \tilde{X}$
and  $|L_Q(b)|=2$. We may  assume that $b \in T_1(X)$. 
Since $|L_Q(b)|=2$, we deduce that  $L_Q(b)=L(b)=M_P(b)=L_P(T_1)$.
Since $b$ exists, $Q_1 \neq \emptyset$. 
Since $|L(d)|=2$  for every $d \in V(\tilde{D})$, it follows that $q_1$ is 
anticomplete to $D_1 \cup D_2$.  Since $b \not \in X^0(L_Q)$,
there is a component $D_0$ of $H|\tilde{X}_{12}$ such that
$q_1$ has a neighbor $d_0 \in V(D_0)$ and $b$ is anticomplete to $V(D_0)$.
Let  $\{i\}=L_Q(b) \cap \{1,2\}$, and let $\{1,2\} \setminus \{i\}=\{j\}$.
Then $j \not \in L_Q(b)=M_P(b)$, and so $j \not \subseteq L_P(T_1)$.
Consequently, there is $s \in S$ with $f(s)=j$, such that $s$ is complete
to $\tilde{X}(T_1)$. Since $V(\tilde{D}) \cup V(D_0) \subseteq X_{12}$,
it follows that $s$ is anticomplete to $V(\tilde{D}) \cup V(D_0)$.

Suppose first that $V(\tilde{D}) \neq \{d\}$. Since $b$ is not complete 
to $D_1 \cup D_2$ (because $L_Q(b) \cap \{1,2\} \neq \emptyset$), there is an 
edge $d_1d_2$ of $\tilde{D}$,  such that
$b$ is adjacent to $d_2$ and not to $d_1$. Now $d_1-d_2-b-s-q_1-d_0$
is a $P_6$ in $\tilde{G}_{12}(s)$, contrary to Lemma~\ref{P6free}.

This proves that $V(\tilde{D})=\{d\}$, and so $b$ is adjacent 
to $d$, $i=2$ and $j=1$. Therefore $L_P(T_1) \cap \{1,2\}=\{2\}$,
and so $L_Q(q_1)=c(q_1)=2$. Since $d_0 \in \tilde{X}_{12}$, it follows that
$L_Q(d_0)=1$. Since $1 \in L_Q(a)$ and $L_Q$ is obtained by exhaustive 
updating, we deduce that $a$ is non-adjacent to $d_0$. 
But now since $1 \in L_Q(a)$ and $f(s)=1$, we deduce that
$a-d-b-s-q_0-d_0$ is a path in $H_{12}(s)$ contradicting 
Lemma~\ref{keycompanion}. This proves that $(H,L_Q)$ has a $12$-insulating 
cutset with far side $Z^{12}$.

Let $\mathcal{L}=\{L_Q \; ; \; Q \in \mathcal{Q}\}$. Then 
$|\mathcal{L}| \leq |(V(G)|^{2m}$. Since $m \leq 2^{|S|}$, it follows that
$|\mathcal{L}| \leq |V(G)|^{2^{|S|}}$. Since $L_Q(v) \subseteq L(v)$ for 
every $v \in V(H)$, it follows that every  coloring of $(H,L')$ is
a coloring of $(H,L)$.

Now suppose that $(H,L)$ is colorable, and let $c$ be a coloring. 
We show that some $L' \in \mathcal{L}$ is colorable.
Let $i \in \{1, \ldots, m\}$. For a vertex $u \in \tilde{X}(T_i)$
define $val(u)=|G_{12}(u)|$. 
If some vertex $u$ of
$\tilde{X}(T_i)$ with a $12$-grandchild has $c(u) \in L(u) \cap \{1,2\}$, let $q_i$ be such a 
vertex with $val(q_i)$ maximum and set $Q_i=\{q_i\}$. If no such 
$u$ exists, let $Q_i=\emptyset$. 

Define $P_1, \ldots, P_m$ similarly replacing $\tilde{X}_{12}$ with 
$\tilde{X}_{34}$. Let 
$$Q=(Q_1, \ldots, Q_m, P_1, \ldots, P_m).$$ 
We show that   $c(v) \in L_Q(v)$ for every
$v \in V(H)$, and so $(H,L_Q)$ is colorable.
Since $L_Q$ is obtained from $L_Q'$ by updating, it is enough to prove that
$c(v) \in L_Q'(v)$. Suppose not. There are two possibilities (possibly
replacing $12$ with $34$).
\begin{enumerate}
\item $v \in \tilde{X}(T_i)$, $Q_i \neq \emptyset$, $G_{12}(q_i)$
is a proper subset of $G_{12}(v)$, and $c(v) \in \{1,2\}$;
\item $v \in \tilde{X}(T_i)$, $Q_i=\emptyset$, 
$G_{12}(v) \neq \emptyset$, and $c(v) \in \{1,2\}$.
\end{enumerate}
We show that in both cases we get a contradiction.
\begin{enumerate}
\item In this case $val(v) > v(q_i)$, contrary to the choice of $q_i$.
\item The existence of $v$ contradicts the fact that $Q_i=\emptyset$.
\end{enumerate}
This proves that $(H,L_Q)$ is colorable and completes the proof of
Theorem~\ref{insulatinglemma}. 
\end{proof}

Let $P=\esspc$ be an orthogonal excellent starred precoloring of a $P_6$-free 
graph $G$. We say that a near-companion triple $(H,L,h)$ is \emph{insulated} 
if for every $i \in \{2,3,4\}$ such that $Z^{1i}$ is non-empty,
$(H,L)$ has a  $1i$-insulating cutset $D \subseteq \tilde{X}$ with far side $Z^{1i}$. 
We can now prove the main result of this section.

\begin{theorem}
\label{insulatinglist} 
There is function $q: \mathbb{N} \rightarrow \mathbb{N}$ such that the
following holds.  Let $G$ be a $P_6$-free graph, let $P=\esspc$ be an
orthogonal excellent starred precoloring of $G$, and let $(H, L, h)$
be a near-companion triple for $P$. There is an algorithm with running
time $O(|V(G)|^{q(|S|)})$ that outputs a collection $\mathcal{L}$ of
$4$-list assignments for $H$ such that
\begin{itemize}
\item $|\mathcal{L}| \leq |V(G)|^{q(|S|)}$.
\item If $L' \in \mathcal{L}$  and $c$ is a proper coloring of $(H,L')$, then
$c$ is a proper coloring of $(H,L)$.
\item If $(H,L)$ is colorable, there exists $L' \in \mathcal{L}$ such that
$(H,L')$ is colorable.
\end{itemize}
Moreover, for every  $L' \in \mathcal{L}$.
\begin{itemize}
\item $L'(v) \subseteq L(v)$ for every $v \in V(H)$.
\item $(H,L',h)$ is insulated.
\end{itemize}
\end{theorem}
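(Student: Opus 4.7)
The plan is to obtain the required collection by three successive applications of Lemma~\ref{insulatinglemma}, one for each $i \in \{2,3,4\}$ with $Z^{1i}$ non-empty. The decisive feature of Lemma~\ref{insulatinglemma} is its third ``moreover'' bullet: any $ij$-insulating cutset with far side $Z^{ij}$ that already exists in $(H,L)$ survives as an $ij$-insulating cutset in every output $(H,L')$. Thus cutsets built in earlier rounds are preserved in later rounds, and we can accumulate all three simultaneously.

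Concretely, I first apply Lemma~\ref{insulatinglemma} to $(H,L,h)$ to produce a collection $\mathcal{L}_1$ such that every $L_1 \in \mathcal{L}_1$ makes $(H,L_1,h)$ into a near-companion triple with a $12$-insulating cutset $D \subseteq \tilde{X}$ of far side $Z^{12}$ (this step is vacuous when $Z^{12} = \emptyset$). For each $L_1 \in \mathcal{L}_1$, I then apply Lemma~\ref{insulatinglemma} in the color-symmetric form obtained by swapping the labels $2$ and $3$; this yields a $13$-insulating cutset with far side $Z^{13}$ while preserving the $12$-cutset from $L_1$. A third application, with the $\{1,2\}$ role played by $\{1,4\}$, produces the final collection $\mathcal{L}$ in which each list assignment simultaneously admits $1i$-insulating cutsets of the required form for every $i \in \{2,3,4\}$ with $Z^{1i} \neq \emptyset$; this is exactly the definition of $(H,L',h)$ being insulated.

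For the colorability equivalence, each individual application of Lemma~\ref{insulatinglemma} both pushes colorings down (a coloring of the new list assignment is a coloring of the old one) and preserves colorability of the pair, so both properties propagate by induction through the three rounds. The size bound multiplies: $|\mathcal{L}| \leq |V(G)|^{3 q_0(|S|)}$, where $q_0$ is the function supplied by Lemma~\ref{insulatinglemma}, and this is absorbed by redefining $q$. Each intermediate triple $(H,L_r,h)$ remains a near-companion triple for $P$ by the second ``moreover'' bullet of Lemma~\ref{insulatinglemma}, so the hypotheses for the next application are met.

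The only real obstacle is the legitimacy of invoking Lemma~\ref{insulatinglemma} with the pair $\{1,3\}$ or $\{1,4\}$ in place of $\{1,2\}$. This reduces to the observation that the definitions of near-companion triple, of $X_{ij}$, $Z^{ij}$, and of $ij$-insulating cutsets are all symmetric under any permutation of the four colors; a relabeling argument therefore transports Lemma~\ref{insulatinglemma} to any pair $\{1,i\}$ without modifying its proof, which is why the preservation of earlier cutsets suffices to close the induction.
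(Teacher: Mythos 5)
Your proposal is correct and takes essentially the same approach as the paper: three successive applications of Lemma~\ref{insulatinglemma} (for $\{1,2\}$, $\{1,3\}$, $\{1,4\}$), relying on its third ``moreover'' bullet to preserve earlier cutsets and on color symmetry to justify the relabeled invocations.
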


\begin{proof}
Let $\mathcal{L}_2$ be as in Lemma~\ref{insulatinglemma}.
By symmetry, we can apply Lemma~\ref{insulatinglemma} with $12$ replaced by
$13$ to $(H,L',h)$ for every $L' \in \mathcal{L}_2$; let $\mathcal{L}_3$
be the union of all the collections of lists thus obtained.
Again by symmetry, we can apply Lemma~\ref{insulatinglemma} with $12$ replaced 
by $14$ to $(H,L',h)$ for every $L' \in \mathcal{L}_3$; let $\mathcal{L}_4$
be the union of all the collections of lists thus obtained. Now
$\mathcal{L}_4$ is the required collection of lists.
\end{proof}

\section{Divide and Conquer}\label{sec:2sat}

The main result of this section is the last piece of machinery that we need to
solve the 4-precoloring-extension problem.

We need the following two facts.
\begin{theorem} \cite{edwards}
\label{Edwards}
There is a polynomial time algorithm that  tests, for  graph $H$ and a list 
assignment $L$ with $|L(v)| \leq 2$ for every $v \in V(H)$, if
$(H,L)$ is colorable, and finds a proper coloring if one exists.
\end{theorem}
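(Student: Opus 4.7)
The plan is a standard reduction to $2$-SAT, which is known to be solvable in linear time (for instance by the Aspvall--Plass--Tarjan algorithm on the implication graph). First I would preprocess the instance by propagating forced colors. If some vertex $v$ has $|L(v)|=0$, output that no coloring exists. If some vertex $v$ has $|L(v)|=1$, then its color is forced; remove that color from $L(u)$ for every neighbor $u$ of $v$, and iterate. Each iteration strictly decreases $\sum_v |L(v)|$, so the process terminates in polynomial time, and it either derives a conflict (some list becomes empty) or leaves a residual instance in which every surviving vertex has $|L(v)|=2$.

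After preprocessing I would encode the reduced instance as $2$-SAT. For each remaining vertex $v$ with $L(v)=\{a_v,b_v\}$, introduce a Boolean variable $x_v$ with the convention that $x_v = \mathrm{true}$ means $v$ is colored $a_v$ and $x_v = \mathrm{false}$ means $v$ is colored $b_v$. For each edge $uv$ and each common color $p \in L(u) \cap L(v)$, the constraint ``$u$ and $v$ do not both receive $p$'' is a disjunction of two literals over $\{x_u, \neg x_u\}$ and $\{x_v, \neg x_v\}$, so it is a valid $2$-clause. Let $\varphi$ be the conjunction of all such clauses; then any satisfying assignment of $\varphi$ translates coordinate-wise to a proper $L$-coloring of $H$, and conversely any proper $L$-coloring yields a satisfying assignment. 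Thus $(H,L)$ is colorable iff $\varphi$ is satisfiable, and one can recover an $L$-coloring from any satisfying assignment in linear time.

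The overall running time is polynomial: preprocessing is $O(|V(H)|\cdot|E(H)|)$, the number of clauses is $O(|E(H)|)$, and $2$-SAT is solvable in linear time. There is no genuine obstacle here; the only point to verify carefully is that the encoding faithfully captures properness, which follows immediately from the construction since every potential monochromatic edge-conflict corresponds to exactly one forbidding $2$-clause.
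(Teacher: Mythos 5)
This theorem is cited by the paper from Edwards (1986) and is not proved in the paper itself, so there is no ``paper's proof'' to compare against. Your argument is the standard and correct reduction of list colouring with lists of size at most two to $2$-SAT: propagate singletons until the instance is purely size-two (or a list empties), then introduce one Boolean variable per remaining vertex and one $2$-clause per potential monochromatic conflict on an edge. Two small points worth tightening for completeness: (i) ``iterate'' in the preprocessing phase should be understood as processing each singleton once (e.g., via a work queue), since re-examining an already-propagated singleton does not decrease $\sum_v |L(v)|$; and (ii) after preprocessing, singleton vertices can simply be discarded, since every edge incident to them has already been resolved by propagation (either a conflict was detected or the offending colour was deleted from the neighbour's list), which is exactly why your encoding only needs variables for vertices with $|L(v)|=2$. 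With those clarifications the argument is complete, and the reduction is indeed essentially the one that underlies this result in the literature.
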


\begin{theorem}\cite{2SAT}
\label{2SATthm}
The 2-SAT problem can be solved in polynomial time.
\end{theorem}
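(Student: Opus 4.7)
The plan is to follow the classical implication-graph approach of Aspvall, Plass, and Tarjan. Given a 2-CNF formula $\varphi$ with variables $x_1,\dots,x_n$ and clauses $C_1,\dots,C_m$, I would build a directed graph $D_\varphi$ on the $2n$ literals $\{x_1,\dots,x_n,\neg x_1,\dots,\neg x_n\}$, adding for each clause $(\ell_1\vee\ell_2)$ the two implication arcs $\neg\ell_1\to\ell_2$ and $\neg\ell_2\to\ell_1$, which express the fact that if one of the two literals is false, the other must be true. This construction uses time $O(n+m)$ and yields a graph of linear size. Note that by construction $D_\varphi$ is \emph{skew-symmetric}: an arc $\ell_1\to\ell_2$ is present if and only if $\neg\ell_2\to\neg\ell_1$ is present.

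The key claim is that $\varphi$ is satisfiable if and only if no variable $x_i$ lies in the same strongly connected component of $D_\varphi$ as its negation $\neg x_i$. One direction is immediate: in any satisfying assignment, all literals within a single SCC must receive the same truth value, since each arc is an implication; thus $x_i$ and $\neg x_i$ sharing an SCC would be a contradiction. For the converse I would compute the SCCs of $D_\varphi$ in linear time (for instance using Tarjan's algorithm), form the condensation, and process its SCCs in \emph{reverse} topological order. At each unprocessed SCC $C$, I set every literal of $C$ to true and every literal of the ``twin'' SCC $\overline{C}$ (the set of negations of literals in $C$) to false; the skew-symmetry of $D_\varphi$ ensures that the twin of an SCC is again an SCC and that the negation map reverses the order of the condensation, so $C$ and $\overline{C}$ are processed consistently.

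The satisfiability test thus reduces to computing SCCs and checking whether any variable shares an SCC with its negation; both tasks run in linear time, and recovering a satisfying assignment takes one additional linear-time pass over the condensation. The main obstacle---and the step where the proof actually has content---is verifying that the greedy assignment cannot produce a conflict: one must argue that if literal $\ell$ is set to true, then the SCC containing $\neg\ell$ was not yet processed, equivalently the SCC of $\ell$ does not lie topologically below the SCC of $\neg\ell$. This is precisely where the arc-contrapositive symmetry is used: any directed path in $D_\varphi$ from the SCC of $\ell$ to the SCC of $\neg\ell$ induces, by taking contrapositives, a directed path from the SCC of $\ell$ back to itself through the SCC of $\neg\ell$, forcing them to coincide and contradicting the assumption that no variable shares an SCC with its negation. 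Once this symmetry lemma is in hand, everything else is routine and yields a polynomial (in fact linear) time algorithm for \textsc{2-SAT}.
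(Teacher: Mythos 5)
The paper states this result by citation to Krom (1967) and does not give a proof of its own, so there is no internal argument to compare against. Your sketch follows the later Aspvall--Plass--Tarjan implication-graph route, which gives a linear-time algorithm, whereas Krom's original paper uses a resolution-closure argument with a higher-degree polynomial bound; either suffices for the paper, which only needs polynomiality. The algorithmic skeleton you give (build the implication digraph, test whether some $x_i$ shares an SCC with $\neg x_i$, and if not read off an assignment from a reverse-topological scan of the condensation) is the standard, correct one.

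However, the final step of your correctness argument is wrong as written. You claim that a directed path in $D_\varphi$ from $\mathrm{SCC}(\ell)$ to $\mathrm{SCC}(\neg\ell)$ would, by taking contrapositives, produce a path from $\mathrm{SCC}(\ell)$ back to itself and hence force the two SCCs to coincide. Taking contrapositives of a path $\ell=u_0\to u_1\to\cdots\to u_k=\neg\ell$ reverses the arcs and negates the literals, yielding $\neg u_k\to\cdots\to\neg u_0$, which is again a path from $\ell$ to $\neg\ell$, not a path from $\neg\ell$ back to $\ell$; nothing closes up into a cycle. Indeed, a path from $\ell$ to $\neg\ell$ across distinct SCCs is entirely possible (the single unit clause $(\neg x)$ gives the arc $x\to\neg x$ with $x$ and $\neg x$ in different SCCs) and merely forces $\ell$ to be assigned false. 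The fact you actually need is the one you mention only in passing: skew-symmetry makes the negation map an order-reversing involution of the condensation, so if $\mathrm{SCC}(a)$ reaches $\mathrm{SCC}(b)$ then $\mathrm{SCC}(\neg b)$ reaches $\mathrm{SCC}(\neg a)$. Combined with the reverse-topological greedy, this shows that for any implication arc $\ell\to\ell'$ with $\ell$ assigned true, $\ell'$ is also assigned true, since otherwise chaining the reachabilities and their mirror images yields $\mathrm{SCC}(\ell)=\mathrm{SCC}(\neg\ell)$, contradicting the satisfiability test. With that substitution the proof is complete.
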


We prove:

\begin{lemma}
\label{farside}
Let $G$ be a $P_6$-free graph and let $P=\esspc$ be an orthogonal excellent starred  precoloring of $G$. Let $(H,L',h)$ be a companion triple for
$P$, where $V(H)=\tilde{X} \cup Z$, as in the definition of a companion 
triple.  Assume that $D \subseteq \tilde{X}$ 
is a $12$-insulating chromatic cutset in $(H,L')$ with far side $Z^{12}$.
There is a polynomial time algorithm that test if $(H|(Z^{12} \cup D), L')$
is colorable, and finds a proper coloring if one exists.
\end{lemma}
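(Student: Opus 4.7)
The plan is to reduce the list coloring of $(H|(Z^{12}\cup D), L')$ to a 2-SAT instance of polynomial size and then invoke Theorem~\ref{2SATthm}. Two structural observations make this viable: every $d\in D$ has $|L'(d)|\le 2$, since $D=D_{12}\cup D_{34}$ and $d\in D_{pq}$ has $L'(d)\subseteq\{p,q\}$; and $Z^{12}$ is a stable set in the companion triple, so inside $H|(Z^{12}\cup D)$ each $z\in Z^{12}$ has its neighbors in $D$, split as $N(z)\cap D_{12}$ and $N(z)\cap D_{34}$ with disjoint color scopes.

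First I would encode the coloring of $D$. By the insulating property, every component $\tilde D$ of $H|D_{pq}$ is bipartite with a uniform list size; a component of list size $1$ is already colored, while a complex one (list size $2$) has lists $\{p,q\}$ on both sides and admits exactly two proper orientations, which I would record by a Boolean variable $b_{\tilde D}$. Standard $2$-clauses enforce properness inside $D$; cross-edges between $D_{12}$ and $D_{34}$ impose no real constraints since the color sets are disjoint. Second, for each $z\in Z^{12}$ I would encode the requirement that some color of $L'(z)$ is not used by any neighbor of $z$ in $D$, expressed in terms of the $b_{\tilde D}$. When $|L'(z)|\le 2$, this expands, by distributing an OR over conjunctions, into polynomially many $2$-clauses on the $b_{\tilde D}$; crucially, within a single complex component the two literals controlling exclusion of the two available colors are mutually negated, so the clauses pairing two literals from the same component are tautological, and only the nontrivial clauses pairing literals from distinct components remain.

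The main obstacle is the case $|L'(z)|\ge 3$, which can arise when $z$ is a wholesome singleton of $G|Y^*$: a naive expansion of ``some color free'' then produces $3$- or $4$-clauses. I would handle this by introducing, for each such $z$, a Boolean \emph{side variable} $t_z$ recording whether $z$ is colored from $L'(z)\cap\{1,2\}$ or from $L'(z)\cap\{3,4\}$; each restriction has size at most $2$. Because the $\{1,2\}$-side and $\{3,4\}$-side involve disjoint orientation variables ($b_{\tilde D}$ for $\tilde D\subseteq D_{12}$ versus $\tilde D\subseteq D_{34}$), the two side conditions decouple, and by also introducing a per-color auxiliary $x_{z,c}$ wherever the chosen side has two available colors, the implications $t_z = \bullet \Rightarrow (\text{side condition})$ can be broken into polynomially many $2$-clauses connecting $t_z$, the $x_{z,c}$, and the $b_{\tilde D}$. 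Finally I would apply Theorem~\ref{2SATthm} to decide satisfiability; from a satisfying assignment the orientations determine the coloring of $D$, and each $z\in Z^{12}$ is then given any free color in $L'(z)$ consistent with $t_z$, yielding a proper $L'$-coloring of $H|(Z^{12}\cup D)$.
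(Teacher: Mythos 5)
Your approach and the paper's share the 2-SAT idea but diverge at the crucial point, and the divergence is where the gap is. Your encoding uses variables $b_{\tilde D}$ recording the orientation of each complex component of $D$, so that a satisfying assignment directly specifies the coloring of $D$ and each $z\in Z^{12}$ merely needs a free color. This is a ``complete'' encoding: satisfiability would need to be equivalent to colorability. The obstacle you flag for $|L'(z)|\ge 3$ is real, but your fix does not close it. To enforce ``if $t_z$ is true and $z$ takes color $c$ (via $x_{z,c}$), then $c$ is not blocked by $\tilde D$,'' the clause is $(\neg t_z \vee \neg x_{z,c} \vee \neg\mathrm{blocked}(c,\tilde D))$, a $3$-clause; packaging $t_z\wedge x_{z,c}$ into a fresh variable $u$ needs $(\neg t_z\vee\neg x_{z,c}\vee u)$, again a $3$-clause, and dropping it makes the implications vacuous. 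More concretely, when $z$ has one-sided neighbors in two complex $D_{12}$-components and two complex $D_{34}$-components and $L'(z)=\{1,2,3\}$, the constraint ``$z$ has a free color'' simplifies to $(b_1\leftrightarrow b_2)\vee(\neg e_1\wedge\neg e_2)$, whose satisfying set is not median-closed (take $(T,T,T,T)$, $(T,F,F,F)$, $(F,F,T,T)$; the coordinatewise median $(T,F,T,T)$ is not in the set), so it cannot be captured by any $2$-SAT over these variables, and adding auxiliaries would still require $3$-clauses to tie them to pairs of existing literals. So the $|L'(z)|\ge 3$ case is not ``polynomially many $2$-clauses'' as claimed.

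The paper takes a genuinely different route that sidesteps this. Its $2$-SAT variables are the vertices of $Z^{12}$ themselves, with $z$ true meaning $z$ is colored from $\{1,2\}$. The clauses are only pairwise incompatibilities among $z$'s (types 1--8), so the $2$-SAT is strictly weaker than a complete encoding. The heavy lifting is then in showing that a satisfying assignment can actually be lifted to a proper coloring of $H|(Z^{12}\cup D)$: one reduces this to $2$-colorability of an auxiliary graph $F$ built from the ``true'' side, and the proof that $F$ is bipartite is a structural argument on paths of length up to five that invokes $P_6$-freeness through Lemmas~\ref{P6free} and~\ref{keycompanion}, together with property (\ref{X0nbr}). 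This is exactly the step your proposal omits, and it is also why your proposal never touches $P_6$-freeness at any point --- a strong warning sign, since list $4$-colouring is NP-hard in general, and the structure provided by the companion triple is what makes the lift from the weak $2$-SAT to an actual coloring possible.
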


\begin{proof}
We may assume that $X_0=X^0(P)$.
Let $\tilde{P}=(\tilde{G},S,X_0,\tilde{X},Y^*, f)$ be as in the 
definition of a companion triple, where $V(H)=\tilde{X} \cup Z$.
By Theorem~\ref{Edwards} we can test in polynomial time
if $H|(D \cap \tilde{X}_{12},L')$ and  $H|(D \cap \tilde{X}_{34},L')$ is 
colorable.
If one of these pairs is not colorable,  stop and output  that 
$(H|(Z^{12} \cup D), L)$ is not colorable. So we may assume
both the pairs are colorable, and in particular every component
of $H|(D \cap \tilde{X}_{12})$ and $H|(D \cap \tilde{X}_{34})$ is bipartite.

We modify $L'$ without changing the colorability property. 
First, let $L''$ be obtained from $L'$ by  updating exhaustively from 
$X^0(L')$.
Next if $v \in V(H) \setminus \tilde{X}_{12}$ has a neighbor on both sides of the bipartition of a component of $H|\tilde{X}_{12}$, we remove both $1$ and $2$ from $L''(v)$, and the same for $\tilde{X}_{34}$; call the resulting list
assignment $L$. (We have already done a similar modification while constructing
list assignments $L_Q$ in the proof of Lemma~\ref{insulatinglemma}, but there 
we only  modified lists of vertices in $\tilde{X}$, so this step is not 
redundant.) Set $f(u)=L(u)$ for every $u \in X^0(L)$.  Clearly:

 \vspace*{-0.4cm}
  \begin{equation}\vspace*{-0.4cm} \label{updated}
    \longbox{\emph{
If $v \in V(H)$
is adjacent to $x \in X^0(L)$, then $L(v) \cap L(x)=\emptyset$.}}
\end{equation}

Next we prove:

 \vspace*{-0.4cm}
  \begin{equation}\vspace*{-0.4cm} \label{X0nbr}
    \longbox{\emph{
Let  $\{p,q\} \in \{\{1,2\},  \{3,4\}\}$ and let
$z \in Z^{12}$ with $|L(z) \cap \{p,q\}|=1$. Let
$L(z) \cap \{p,q\}=\{i\}$ and $\{p,q\} \setminus L(z)=\{j\}$.
Then there exists $y \in V(h(z))$ and $u \in S \cup X_0 \cup X^0(L)$ such that
$f(u)=j$ and $uy \in E(\tilde{G})$.}}
\end{equation}

To prove (\ref{X0nbr}) let $z \in Z$ with $L(z) \cap \{1,2\}=\{1\}$
(the other cases are symmetric). Since $1 \in L(z)$, it follows
that $z$ does not have neighbors on both sides of the bipartition of a 
component of $H|\tilde{X}_{12}$, and therefore $L(z)=L''(z)$. If
$2 \not \in L'(z)$, then such $u$ exists from the definition of
a near-companion triple, so we may assume $2 \in L'(z)$. This implies 
that there is $u \in X^0(L)$ such that $u$ is adjacent to $z$, and $f(u)=2$.
Since $Z$ is stable, it follows that $u \in \tilde{X} \cup X_0 \cup S$,
and so  $u$ is complete to $V(h(z))$, and
(\ref{X0nbr}) follows. 

\bigskip

We define an instance $I$ of the $2$-SAT problem.
The variables are the vertices of $Z^{12}$, and the clauses are as follows:
\begin{enumerate}
\item For every $z_1, z_2 \in Z^{12}$, if $L(z_i) \cap \{1,2\}=\{i\}$ for
$i=1,2$ and $z_1, z_2$ have neighbors on the same side of the bipartition of
some component of $H|(D \cap \tilde{X}_{12})$, add the clause 
$(\neg{z_1} \vee \neg{z_2})$.
\item For every $z_1, z_2 \in Z^{12}$, if 
$L(z_1) \cap \{1,2\}=L(z_2) \cap \{1,2\} \in \{\{1\},\{2\}\}$ for
$i=1,2$ and $z_1, z_2$ have neighbors on opposite sides of the bipartition of
some component of $H|(D \cap \tilde{X}_{12})$, 
add the clause  $(\neg{z_1} \vee \neg{z_2})$.
\item For every $z_1, z_2 \in Z^{12}$, if $z_1,z_2$  have neighbors on the same
  side of the
  bipartition of
  some component of $H|(D \cap \tilde{X}_{12})$, and also $z_1,z_2$ have
  neighbors on opposite sides of the bipartition of
some component of $H|(D \cap \tilde{X}_{12})$, 
  add the clause 
$(\neg{z_1} \vee \neg{z_2})$.
\item For every $z_3, z_4 \in Z^{12}$, if $L(z_i) \cap \{3,4\}=\{i\}$ for
$i=3,4$ and $z_3, z_4$ have neighbors on the same side of the bipartition of
some component of $H|(D \cap \tilde{X}_{34})$,
add the clause $(z_3 \vee z_4)$. 
\item For every $z_3, z_4 \in Z^{12}$, if 
$L(z_3) \cap \{3,4\}=L(z_4) \cap \{3,4\} \in \{\{3\},\{4\}\}$ for
$i=3,4$ and $z_3, z_4$ have neighbors on opposite sides of the bipartition of
some component of $H|(D \cap \tilde{X}_{34})$, 
add the clause  $(z_3 \vee z_4)$.
\item For every $z_3, z_4 \in Z^{12}$, if 
$z_3, z_4$ have neighbors on the same side of the bipartition of
  some component of $H|(D \cap \tilde{X}_{34})$, and also $z_3,z_4$  have
  neighbors on opposite sides of the bipartition of
some component of $H|(D \cap \tilde{X}_{34})$, 
add the clause $(z_3 \vee z_4)$. 
\item If $z \in Z^{12}$ and $L(z) \subseteq \{1,2\}$, add the clause
$(z \vee z)$.
\item If $z \in Z^{12}$ and $L(z) \subseteq \{3,4\}$, add the clause 
$(\neg{z} \vee \neg{z})$.
\end{enumerate}

By Theorem~\ref{2SATthm} we can test in polynomial time if $I$ is satisfiable.

We claim that $I$ is satisfiable if and only if $(H|(Z^{12} \cup D),L)$
is colorable, and a proper coloring of $(H|(Z^{12} \cup D),L)$ can be
constructed in polynomial time from a satisfying assignment for $I$.

Suppose first that $(H|(Z^{12} \cup D),L)$ is colorable, and let
$c$ be a proper coloring. 
For $z \in Z^{12}$, set $z=TRUE$ if $c(z) \in \{1,2\}$ and $z=FALSE$
if $c(z) \in \{3,4\}$. It is easy to check that every clause is satisfied.

Now suppose that $I$ is satisfiable, and let $g$ be a satisfying assignment.
Let $A'$ be the set of vertices $z \in Z^{12}$ with $g(z)= TRUE$, and let
$B'=Z^{12} \setminus A'$.
Let $A=A' \cup (D \cap \tilde{X}_{12})$ and $B=B' \cup (D \cap \tilde{X}_{34})$.
For $v \in A$ let $L_A(v)=L'(v) \cap \{1,2\}$, and for
$v \in B$ let $L_B(v)=L'(v) \cap \{3,4\}$. In order to show that
$(H|(Z^{12} \cup D),L)$ is colorable and find a proper coloring, it is enough 
to  prove that
$(H|A, L_A)$ and $(H|B, L_B)$ are colorable, and find their proper colorings. 
We show that $(H|A,L_A)$ is colorable; the argument for $(H|B,L_B)$  is 
symmetric.

Since for every $z \in Z^{12}$ with $L(z) \subseteq \{3,4\}$ 
$(\neg{z} \vee \neg{z})$ is a clause  (of type 8) in $I$, it
follows that $L(z) \cap \{1,2\} \neq \emptyset$ for every $z \in A$.
Let $A_1=\{v \in A \; : \; L_A(v)=\{1\}\}$, 
$A_2=\{v \in A \; : \; L_A(v)=\{2\})$, and  $A_3=A \setminus (A_1 \cup A_2)$
Let $F$ be a graph defined as follows.
$V(F)=(A_3 \cup \{a_1,a_2\})$, where
$F \setminus \{a_1,a_2\}=H|A_3$, $a_1a_2 \in E(F)$, 
and for $i=1,2$ $v  \in A_3$ is adjacent to 
$a_i$ if and only if $v$ has a neighbor in $A_i$ in $H$.

We claim that
$(H|A, L_A)$ is colorable if and only if $F$ is bipartite;
and if $F$ is bipartite, then a proper coloring of $(H|A,L_A)$ can be 
constructed in polynomial time. 
Suppose  $F$ is bipartite and let $(F_1,F_2)$ be the bipartition.
We may assume $a_i \in F_i$. Let $i \in \{1,2\}$. 
For every 
$v \in (F_i \cup A_i) \setminus \{a_i\}$, we have that $i \in L_A(v)$, and
so we can set $c(v)=i$. This proves that $(H|A, L_A)$ is colorable,
and constructs a proper coloring.
Next assume that $(H|A, L_A)$ is colorable. For $i=1,2$, let  $F_i'$ be the 
set of vertices of $A$ colored $i$. Then $A_i \subseteq F_i'$, and
setting $F_i=(F_i' \setminus A_i) \cup \{a_i\}$, we get that
$(F_1,F_2)$ is a bipartition of $F$. This proves the claim.

Finally we show that $F$ is bipartite. Recall that the pair  
$(H|(D \cap \tilde{X}_{12}),L)$ is colorable, and therefore
$H|(D \cap \tilde{X}_{12})$ is bipartite.
Since $L_A(v) \subseteq L(v)$ for every $v \in A_3$, 
and $L_A(v) \cap \{1,2\} \neq \emptyset$ for every $v \in A$, it follows
that no vertex of $A \cap Z^{12}$ has a neighbor on two opposite sides of a
bipartition of a component of $H|(D \cap \tilde{X}_{12})$.
First we show that $H|A$ is bipartite. Suppose that there is an odd induced
cycle $C$ in $H|A$. It follows that $|V(C)|=5$ and, since
$Z^{12}$ is stable, $|C \cap Z^{12}|=2$.
But then some clause of type 3 or 6 is not satisfied, a contradiction.
This proves that $H|A$ is bipartite. 

Suppose that $F$ is not bipartite. Then there is an odd cycle $C$ in $F$,
and so $V(C) \cap \{a_1,a_2\} \neq \emptyset$. In $H$ this implies  that
there is a path $T=t_1-\ldots-t_k$  with 
$\{t_2, \ldots, t_{k-1}\} \subseteq A_3$, such that either 
\begin{itemize}
\item $k$ is even, and for some  $i \in \{1,2\}$ $t_1, t_k \in A_i$, or  
\item $k$ is odd, $t_1 \in A_1$, and $t_k \in A_2$.
\end{itemize}
Since $T$ is a path in $H|(Z \cup \tilde{X}_{12})$, it follows that  $k \leq 5$.
If $t_1 \in \tilde{X}_{12} \cap D$, then $t_1 \in X^0(L)$, and so  
by (\ref{updated}),   $t_2 \in A_1 \cup A_2$, a contradiction. 
This proves that $t_1 \in Z^{12}$, and similarly $t_k \in Z^{12}$.

Suppose first that $k$ is even. Since  $Z^{12}$ is stable, it follows that 
$k \neq 2$, and so $k=4$.  Since $t_1, t_4 \in Z^{12}$ and 
since $Z^{12}$ is stable, it follows that $t_2, t_3 \in \tilde{X}_{12}$.
But now $(\neg{t_1} \vee \neg{t_4})$ is a clause (of type 2) in $I$, and
yet $g(t_1)=g(t_4)=TRUE$, a contradiction.

This proves that $k$ is odd. If $k=3$ then, since $Z^{12}$ is stable, 
$t_2  \in \tilde{X}_{12}$, and so $(\neg{t_1} \vee \neg{t_3})$ is a 
clause (of type 1) in $I$, and yet  $g(t_1)=g(t_3)=TRUE$, a contradiction. 
This proves that $k=5$. Since $Z^{12}$ is 
stable, it follows that $t_2,t_4 \in \tilde{X}_{12}$. If 
$t_3 \in \tilde{X}_{12}$, then $(\neg{t_1} \vee \neg{t_5})$ is a
clause (of type 1) in $I$, contrary to the fact that both
$g(t_1)=g(t_5)=TRUE$, a contradiction. Therefore $t_3 \in Z^{12}$.
We may assume that $t_1 \in A_1$. By (\ref{X0nbr}) there exist
$u \in S \cup X_0 \cup X^0(L)$ and $y_1 \in V(h(t_1))$ such that
$f(u)=2$ and $uy_1 \in E(\tilde{G})$. Since $t_2 \in \tilde{X}$, 
it follows that $t_2$ is complete to $V(h(t_1))$, and in particular $t_2$ is
adjacent to $y_1$. Since $X_0=X^0(P)$, it follows that $u$ is anticomplete
to $\{t_2,t_4\}$. Let $i \in \{3,5\}$. By the definition of a companion triple, 
since    $2 \in L(t_i)$, there exists $y_i \in V(h(t_i))$
such that $u$ is non-adjacent to $y_i$ in $\tilde{G}$. Now since no vertex of 
$\tilde{X}$
is mixed on a component to $\tilde{G}|Y^*$, it follows that
$u-y_1-t_2-y_3-t_4-y_5$  is a $P_6$ in $\tilde{G}_{12}(u)$, contrary to 
Lemma~\ref{P6free}.   This proves Lemma~\ref{farside}. 
\end{proof}

\section{The complete algorithm} \label{sec:complete}

First we prove Theorem~\ref{excellent}, which we restate.

\begin{theorem}
\label{excellent2}
For every integer $C$ there exists a polynomial-time algorithm with the following specifications.
\\
\\
{\bf Input:}  An excellent starred precoloring $P=\esspc$ of a 
$P_6$-free graph $G$ with $|S| \leq C$.
\\
\\
{\bf Output:} A precoloring extension of $P$ or a determination
that none exists.
\end{theorem}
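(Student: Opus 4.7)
The plan is to chain together the four main reductions developed in Sections~\ref{sec:orthogonal}--\ref{sec:2sat}, reducing the problem at each step to a polynomial collection of simpler subproblems. First I would apply Theorem~\ref{orthogonalthm} to $P$ to produce, in time $O(|V(G)|^{q(C)})$, an equivalent collection $\mathcal{L}_1$ of orthogonal excellent starred precolorings of induced subgraphs of $G$, each with seed size bounded in terms of $C$; it suffices to decide, for each $P' \in \mathcal{L}_1$, whether $P'$ has a precoloring extension, since any such extension can be lifted in polynomial time to one of $P$. Fix $P' \in \mathcal{L}_1$. Using Theorem~\ref{companion} I would construct in polynomial time a companion triple $(H,L,h)$ for $P'$; by definition of a companion triple, $P'$ has an extension if and only if $(H,L)$ is colorable. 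Then I would apply Theorem~\ref{insulatinglist} to $(H,L,h)$ to obtain a polynomial collection $\mathcal{M}$ of list assignments $L'$ for $H$ such that each $(H,L',h)$ is an insulated near-companion triple and $(H,L)$ is colorable if and only if $(H,L')$ is colorable for some $L' \in \mathcal{M}$.

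For each $L' \in \mathcal{M}$ I would test colorability of $(H,L')$ as follows. For every $i \in \{2,3,4\}$ with $Z^{1i}$ non-empty, the definition of \emph{insulated} provides a $1i$-insulating chromatic cutset $D^{(i)} \subseteq \tilde{X}$ with far side $Z^{1i}$. Since $Z^{12}\cup Z^{13}\cup Z^{14}=Z$ and $Z$ is stable, the three far sides together cover $Z$, so what remains after peeling them off is a ``core'' subproblem contained in $H|\tilde{X}$, where every vertex has a list of size at most two. Iteratively applying Theorem~\ref{insulating} (once for each $i$) decomposes the decision problem for $(H,L')$ into the three far-side subproblems $(H|(Z^{1i}\cup D^{(i)}),L')$, each of which is decided in polynomial time by Lemma~\ref{farside} via a $2$-SAT encoding, together with a core $2$-list coloring problem solvable by Theorem~\ref{Edwards}. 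If all four pieces are colorable, the constructive half of Theorem~\ref{insulating} recombines them into a proper coloring of $(H,L')$, which by the companion-triple property yields a precoloring extension of $P'$ and hence of $P$; if no piece is colorable, and no other $L' \in \mathcal{M}$ succeeds, then no $P' \in \mathcal{L}_1$ has an extension, and therefore $P$ has none.

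The main obstacle I anticipate is making the iterated application of Theorem~\ref{insulating} rigorous: after peeling off the far side $Z^{12}$ using $D^{(12)}$, one must check that the $13$- and $14$-insulating cutsets are still insulating in the reduced instance $(H|(B^{(12)}\cup D^{(12)''}),L')$, and similarly after the second peel. The cleanest way around this is to treat the three cutsets symmetrically, viewing $V(H)$ as partitioned into the three far sides $Z^{1i}$, the three cutsets $D^{(i)}$, and the core; then one verifies directly from the $1i$-insulated property (exactly as in the proof of Theorem~\ref{insulating}) that any pair of locally valid colorings can be made consistent on each complex component by a bipartition flip, without creating conflicts at the other slice boundaries. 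Apart from this bookkeeping, the substantive structural content has already been packaged inside Theorems~\ref{orthogonalthm}, \ref{companion}, \ref{insulatinglist}, \ref{insulating}, and Lemma~\ref{farside}, and the overall polynomial running time follows by composing the polynomial bounds provided at each stage.
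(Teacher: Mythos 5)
Your proposal is correct and follows essentially the same route as the paper: Theorem~\ref{orthogonalthm} $\to$ Theorem~\ref{companion} $\to$ Theorem~\ref{insulatinglist} $\to$ Lemma~\ref{farside}/Theorem~\ref{Edwards} on the pieces $\to$ iterated Theorem~\ref{insulating} to glue. The subtlety you flag about whether the $1i$-cutsets remain insulating under peeling is exactly the point the paper handles, by building up nested subgraphs $H_1$, then $H|(V(H_1)\cup V(H_2))$, etc., and noting that each $D^i$ is still an insulating $1i$-cutset in the corresponding intermediate graph; the potential overlap of cutsets is harmless because the ``complex'' part ${D^i}'$ (list size $2$) of one cutset is automatically disjoint from the ``simple'' part ${D^j}''$ (list size $1$) of another.
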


\begin{proof}
By Theorem~\ref{orthogonalthm} 
we can construct in polynomial time a collection
$\mathcal{L}$ of orthogonal excellent starred   precolorings of $G$, such that in order to determine if $P$ has a precoloring extension (and find one if it 
exists), it is enough to check if each element of $\mathcal{L}$ has a 
precoloring extension, and find one if it exists.
Thus let $P_1 \in \mathcal{L}$. 
By Theorem~\ref{companion} we can construct in polynomial time a
companion triple $(H,L,h)$ for $P_1$, and it is enough to check if
$(H,L,h)$ is colorable.

Now proceed as follows. If $L(v)=\emptyset$ for some
$v \in V(H)$, stop and output ``no precoloring extension''.
So we may assume $L(v) \neq \emptyset$ for every $v \in V(H)$.
Let $\mathcal{L}$ be a collection of
lists as in Theorem~\ref{insulatinglist}.
If $\mathcal{L}=\emptyset$, stop and output ``no precoloring extension'',
so we may assume that $\mathcal{L} \neq \emptyset$.
Let $L' \in \mathcal{L}$; then $(H,L',h)$ is insulated.
For every $i$ let $D^i$ be and insulating $1i$-cutset with far side $Z^{1i}$, 
and  let ${D^i}'=\{d \in D_i \; : \; |L'(d)|=2\}$. 
Let $H_i=H|(D^i \cup Z^{1i})$, and let 
$H_1=H \setminus \bigcup_{i=2}^4({D^i}'\cup Z^{1i})$.
Observe that $V(H_1) \subseteq \tilde{X}$.
By Lemma~\ref{farside}, we can check if each of the pairs
$(H_i,L')$  with $i \in \{2,3,4\}$ is colorable, and by Theorem~\ref{Edwards}, 
we can check if $(H_1,L')$ 
is colorable and find a proper coloring if one exists.
If one of these pairs  is not colorable,
 stop and output ``no precoloring extension''.
So we may assume that $(H_i,L')$ is colorable  for every 
$i \in \{1, \ldots, 4\}$. 
Observe that $D^2$ is an insulating
$12$-cutset in $(H|(V(H_1) \cup V(H_2)),L')$ with far side $Z^{12}$, 
$D^3$ is an insulating
$13$-cutset in $(H|(V(H_1) \cup V(H_2) \cup V(H_3)),L')$ with far side $Z^{13}$, 
and $D^4$ is an insulating $14$-cutset in $(H,L')$ with far side $Z^{14}$.
Now three applications of Theorem~\ref{insulating} show that
$(H,L)$ is colorable, and produce a proper coloring.
This proves~\ref{excellent2}.
\end{proof}

We can now prove the main result of the series, the following.
\begin{theorem}
\label{complete}
There exists a polynomial-time algorithm with the following specifications.
\\
\\
{\bf Input:}  A 4-precoloring $(G,X_0,f)$ of a $P_6$-free graph $G$.
\\
\\
{\bf Output:} A precoloring extension of $(G,X_0,f)$ or a determination
that none exists.
\end{theorem}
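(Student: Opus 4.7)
The plan is simply to compose the two main reduction results of the series. We start with an arbitrary 4-precoloring $(G, X_0, f)$ of a $P_6$-free graph $G$, and apply Theorem~\ref{Yaxioms} with its universal constant $C$. This produces, in polynomial time, a collection $\mathcal{L}$ of at most $|V(G)|^C$ excellent starred precolorings of $G$, each satisfying $|S'| \leq C$, $X_0 \subseteq S' \cup X_0'$, $f'|X_0 = f|X_0$, and such that $(G, X_0, f)$ has an extension iff at least one $P' \in \mathcal{L}$ does; moreover an extension of $(G, X_0, f)$ can be reconstructed in polynomial time from an extension of any member of $\mathcal{L}$.

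Next I would iterate over $\mathcal{L}$. For each $P' \in \mathcal{L}$, since $|S'| \leq C$, the algorithm of Theorem~\ref{excellent} (equivalently Theorem~\ref{excellent2}, which has already been established) applied with the same constant $C$ runs in polynomial time on $P'$ and either returns a precoloring extension of $P'$ or certifies that none exists. Call this algorithm on every member of $\mathcal{L}$.

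If any $P' \in \mathcal{L}$ produces an extension $c'$, feed $c'$ into the reconstruction procedure guaranteed by Theorem~\ref{Yaxioms}(1) to obtain a 4-precoloring extension of $(G, X_0, f)$ in polynomial time, and output it. Otherwise, every $P' \in \mathcal{L}$ is infeasible, and again by Theorem~\ref{Yaxioms}(1) we may output ``no extension exists''.

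The total running time is polynomial: constructing $\mathcal{L}$ takes polynomial time; there are at most $|V(G)|^C$ members to process; each call to the algorithm of Theorem~\ref{excellent} is polynomial; and the final reconstruction is polynomial. There is no real obstacle at this stage, since all the work has been done: Theorem~\ref{Yaxioms} handles the reduction from general precolorings to excellent starred precolorings with bounded seed (this is the content of the companion paper \cite{Paper2}), and Theorem~\ref{excellent2} is precisely the algorithm developed through Sections~\ref{sec:orthogonal}--\ref{sec:2sat} of the present paper. The only thing to verify is that the constants match up, namely that the $C$ supplied by Theorem~\ref{Yaxioms} can be fed as the input constant to the algorithm family produced by Theorem~\ref{excellent}, which is immediate.
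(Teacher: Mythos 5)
Your proposal is correct and takes essentially the same approach as the paper: apply Theorem~\ref{Yaxioms} to obtain the collection $\mathcal{L}$, run the algorithm of Theorem~\ref{excellent} on each member, and reconstruct a solution for the original instance from any successful run. The extra remarks about matching constants and the polynomial bookkeeping are sound but add nothing beyond what the paper's short proof already implies.
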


\begin{proof}
Let  $\mathcal{L}$ be as in Theorem~\ref{Yaxioms}. Then $\mathcal{L}$
can be constructed in polynomial time, and it is enough to check if
each element of $\mathcal{L}$ has a precoloring extension, and find one if it 
exists. Now apply the algorithm of Theorem~\ref{excellent2} to every element 
of $\mathcal{L}$.
\end{proof}

\section{Acknowledgments}
This material is based upon work supported in part by the U. S. Army  Research 
Laboratory and the U. S. Army Research Office under    grant number 
W911NF-16-1-0404. The authors are also grateful to Pierre Charbit, Bernard Ries,
Paul Seymour, Juraj Stacho and Maya Stein for many useful discussions.

\end{document}